\newtheorem{theorem}{Theorem}[section]
\newtheorem{lemma}[theorem]{Lemma}
\newtheorem{corollary}[theorem]{Corollary}
\newtheorem{definition}[theorem]{Definition}
\newtheorem{example}[theorem]{Example}
\newtheorem{proposition}[theorem]{Proposition}
\newtheorem{remark}[theorem]{Remark}
\newcommand{\dbar}{\bar{\partial}}
\newcommand{\dpsum}[2]{\displaystyle{\sum_{#1}^{#2}}}
\newcommand{\pd}[2]{\frac{\partial #1}{\partial #2}}
\newcommand{\bb}[1]{\mathbb{#1}}
\newcommand{\cu}[1]{\mathcal{#1}}
\newcommand{\til}[1]{\widetilde{#1}}
\def\bC{\mathbb{C}}
\begin{document}

\title[On the Jumping Phenomenon of $\dim_{\mathbb{C}}H^q(\cu{X}_t,\cu{E}_t)$]{On the Jumping Phenomenon of $\dim_{\mathbb{C}}H^q(\cu{X}_t,\cu{E}_t)$}
\author[K. Chan]{Kwokwai Chan}
\address{Department of Mathematics\\ The Chinese University of Hong Kong\\ Shatin\\ Hong Kong}
\email{kwchan@math.cuhk.edu.hk}
\author[Y.-H. Suen]{Yat-Hin Suen}
\address{Department of Mathematics\\ The Chinese University of Hong Kong\\ Shatin\\ Hong Kong}
\email{yhsuen@math.cuhk.edu.hk}

\date{\today}

\begin{abstract}
Let $X$ be a compact complex manifold and $E$ be a holomorphic vector bundle on $X$. Given a deformation $(\cu{X},\cu{E})$ of the pair $(X,E)$ over a small polydisk $B$ centered at the origin, we study the jumping phenomenon of the cohomology groups $\dim_{\mathbb{C}}H^q(\cu{X}_t,\cu{E}_t)$ near $t = 0$. Generalizing previous results of X. Ye \cite{Ye_jumping, Ye_jumping_tangent} (for the tangent bundle $E = T_{\cu{X}_t}$ and exterior powers of the cotangent bundle $E = \Omega^p_{\cu{X}_t}$), we show that there are precisely two cohomological obstructions to the stability of $\dim_{\mathbb{C}}H^q(\cu{X}_t,\cu{E}_t)$, which can be expressed explicitly in terms of the Maurer-Cartan element associated to the deformation $(\cu{X},\cu{E})$. As an application, we study the jumping phenomenon of the dimension of the cohomology group $H^1(\cu{X}_t,\text{End}(T_{\cu{X}_t}))$, which is related to a question raised by physicists \cite{Huybrechts95}.
\end{abstract}

\maketitle

\tableofcontents

\section{Introduction}

Let $X$ be a compact complex manifold and $\pi:\mathcal{X}\rightarrow B$ be a small deformation of $X=\pi^{-1}(0)$ over a small polydisk $B$ centered at the origin in some complex vector space. Suppose that $\mathcal{E}$ is a coherent sheaf on $\mathcal{X}$ which is flat over $B$. Then $(\mathcal{X},\mathcal{E})$ is a deformation of the pair $(X, \mathcal{E}|_{X})$.

It is known by Grauert's direct image theorem that
the dimension $\dim_{\bC}H^q(\mathcal{X}_t,\mathcal{E}_t)$ is an upper semi-continuous function in $t \in B$. Moreover, we have the following characterization for when the dimension $\dim_{\bC}H^q(\mathcal{X}_t,\mathcal{E}_t)$ is locally constant, also due to Grauert.

\begin{theorem}[Grauert \cite{Grauert_DIT}]\label{prop:locally_freeness}
Let $\pi:\mathcal{X}\rightarrow B$ be a flat proper holomorphic map between complex analytic spaces $\mathcal{X},B$ with $B$ being reduced and connected. Suppose that $\mathcal{E}$ is a coherent sheaf on $\mathcal{X}$ that is flat over $B$. Let $k(t):=\mathcal{O}_{B,t}/\mathfrak{m}_t$ be the residue field at $t\in B$ and $\mathcal{E}_t$ be the pullback of $\mathcal{E}$ to $\mathcal{X}_t$. Then the following are equivalent:
\begin{itemize}
\item [(a)] The function
$$t\mapsto\dim_{\bC}H^q(\mathcal{X}_t,\mathcal{E}_t)$$
is locally constant in $t\in B$.

\item [(b)] The sheaf $R^q\pi_*\mathcal{E}$ is locally free and the natural map
$$R^q\pi_*\mathcal{E}\otimes k(t)\rightarrow H^q(\mathcal{X}_t,\mathcal{E}_t)$$
is an isomorphism.
\end{itemize}
\end{theorem}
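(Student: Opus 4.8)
The plan is to reduce the statement to a purely algebraic assertion about a single bounded complex of finite free modules, and then to settle it by elementary module theory over the local rings $\mathcal{O}_{B,t}$. The essential analytic input is Grauert's coherence theorem in its refined ``base change'' form: after shrinking $B$ to a relatively compact polydisk, one has a bounded complex
\[
L^\bullet \;=\; \big(L^0 \xrightarrow{d^0} L^1 \xrightarrow{d^1} \cdots \xrightarrow{d^{n-1}} L^n\big)
\]
of finite free $\mathcal{O}_B$-modules, $n$ being a bound for the fibre dimension of $\pi$, together with isomorphisms $H^q(\mathcal{X}_{B'},\mathcal{F}_{B'})\cong H^q\big(\Gamma(B',\,L^\bullet\otimes_{\mathcal{O}_B}\mathcal{O}_{B'})\big)$ functorial in morphisms $B'\to B$ of complex spaces; informally, $L^\bullet$ computes cohomology and commutes with base change. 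One produces $L^\bullet$ from the \v{C}ech complex of $\mathcal{F}$ for a finite Stein cover, replacing it up to quasi-isomorphism by a bounded-above complex of finite free sheaves (possible because its cohomology sheaves are coherent, again by Grauert) and truncating using vanishing of cohomology above degree $n$; the references are Grauert's original paper and the accounts of Forster--Knorr, Kiehl--Verdier, Douady and Bingener. Granting $L^\bullet$ we get $R^q\pi_*\mathcal{F}\cong\mathcal{H}^q(L^\bullet)$, $H^q(\mathcal{X}_t,\mathcal{F}_t)\cong H^q(L^\bullet\otimes_{\mathcal{O}_B}k(t))$, and the natural map of (b) becomes the canonical comparison map $\mathcal{H}^q(L^\bullet)\otimes k(t)\to H^q(L^\bullet\otimes k(t))$.

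Two elementary facts will be used. First, $\dim_{\bC}H^q(\mathcal{X}_t,\mathcal{F}_t)=\operatorname{rk}L^q-\operatorname{rk}(d^q\otimes k(t))-\operatorname{rk}(d^{q-1}\otimes k(t))$, and since the rank of a matrix of holomorphic functions is lower semi-continuous, so is each $t\mapsto\operatorname{rk}(d^i\otimes k(t))$; hence $\dim_{\bC}H^q(\mathcal{X}_t,\mathcal{F}_t)$ is upper semi-continuous. Second, a coherent sheaf on a reduced complex space is locally free near a point precisely when the dimension of its fibres is locally constant near that point. The implication $(b)\Rightarrow(a)$ is then immediate: if $R^q\pi_*\mathcal{F}\cong\mathcal{H}^q(L^\bullet)$ is locally free of rank $r$ near $t_0$ and the comparison map is an isomorphism there, then $\dim_{\bC}H^q(\mathcal{X}_t,\mathcal{F}_t)=\dim_{k(t)}\big(\mathcal{H}^q(L^\bullet)\otimes k(t)\big)=r$ for all $t$ near $t_0$, so this function is locally constant, hence constant on the connected space $B$.

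For $(a)\Rightarrow(b)$, fix $t_0\in B$; the ring $\mathcal{O}_{B,t_0}$ is Noetherian and reduced. By hypothesis the integer-valued function $\dim_{\bC}H^q(\mathcal{X}_t,\mathcal{F}_t)$ is constant near $t_0$, and since it equals $\operatorname{rk}L^q$ minus the two lower semi-continuous quantities $\operatorname{rk}(d^q\otimes k(t))$ and $\operatorname{rk}(d^{q-1}\otimes k(t))$, both of these must be locally constant near $t_0$ (two lower semi-continuous integer-valued functions with locally constant sum are each locally constant). Consequently $\operatorname{coker}(d^{q-1})$ and $\operatorname{coker}(d^q)$ have locally constant fibre dimensions, hence are locally free near $t_0$; then the exact sequences $0\to\operatorname{im}(d^{q-1})\to L^q\to\operatorname{coker}(d^{q-1})\to 0$ and $0\to\ker(d^q)\to L^q\to\operatorname{im}(d^q)\to 0$ split (the second splits because $\operatorname{im}(d^q)$ is a local summand of $L^{q+1}$, as $L^{q+1}/\operatorname{im}(d^q)=\operatorname{coker}(d^q)$ is locally free), so $\operatorname{im}(d^{q-1})$ and $\ker(d^q)$ are local direct summands of $L^q$ and $\operatorname{im}(d^{q-1})$ is in turn a local direct summand of $\ker(d^q)$. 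Therefore $\mathcal{H}^q(L^\bullet)=\ker(d^q)/\operatorname{im}(d^{q-1})$ is locally free near $t_0$, i.e.\ $R^q\pi_*\mathcal{F}$ is locally free; and tensoring the split sequence $0\to\operatorname{im}(d^{q-1})\to\ker(d^q)\to\mathcal{H}^q(L^\bullet)\to 0$ with $k(t)$, while noting that the ambient split sequences remain exact after this base change, identifies $\operatorname{im}(d^{q-1})\otimes k(t)$ with $\operatorname{im}(d^{q-1}\otimes k(t))$ and $\ker(d^q)\otimes k(t)$ with $\ker(d^q\otimes k(t))$, whence the comparison map $\mathcal{H}^q(L^\bullet)\otimes k(t)\to H^q(L^\bullet\otimes k(t))$ is an isomorphism near $t_0$. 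As $t_0$ was arbitrary and $B$ is connected, this is exactly (b). The genuine obstacle throughout is the first step --- producing the complex $L^\bullet$ with the universal base-change property, which is the substance of Grauert's direct image theorem; granting it, the rest is the soft homological algebra sketched here.
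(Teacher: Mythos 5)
The paper does not prove this statement: it is quoted verbatim as Grauert's theorem (with a citation to Grauert's direct image paper) and used as a black box, so there is no in-paper argument to compare against. Judged on its own, your proof is the standard and correct one. You correctly isolate the genuinely hard analytic input --- the existence, locally on $B$, of a bounded complex $L^\bullet$ of finite free $\mathcal{O}_B$-modules computing $R^\bullet\pi_*\mathcal{F}$ universally with respect to base change (Forster--Knorr, Kiehl--Verdier) --- and the remaining linear algebra is sound: the formula $\dim_{\bC}H^q(\mathcal{X}_t,\mathcal{F}_t)=\operatorname{rk}L^q-\operatorname{rk}(d^q\otimes k(t))-\operatorname{rk}(d^{q-1}\otimes k(t))$, the observation that two lower semi-continuous integer functions with locally constant sum are each locally constant, the use of reducedness of $B$ to pass from locally constant fibre dimension of $\operatorname{coker}(d^{q-1})$ and $\operatorname{coker}(d^q)$ to their local freeness, and the splitting argument showing $\operatorname{im}(d^{q-1})\subset\ker(d^q)\subset L^q$ are nested local direct summands, which yields both the local freeness of $\mathcal{H}^q(L^\bullet)$ and the compatibility of $\ker$ and $\operatorname{im}$ with $\otimes\,k(t)$, hence the base-change isomorphism. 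Two cosmetic remarks: the reduction to a single polydisk is harmless because both (a) and (b) are local statements on $B$ (connectedness is never really needed); and your construction sketch of $L^\bullet$ by truncating a bounded-above free resolution glosses over the fact that the degree-$0$ term of the truncation is only finitely generated and flat rather than free, which over the local rings $\mathcal{O}_{B,t}$ still gives local freeness, so nothing breaks. Since this is exactly the content you attribute to Grauert, the proposal is acceptable as a proof of the cited theorem.
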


However, condition (b) in the theorem above is not easy to check in general even when $\mathcal{E}$ is locally free. In \cite{Ye_jumping, Ye_jumping_tangent}, X. Ye studied the jumping phenomenon of the dimensions $\dim_{\bC}H^q(X,\bullet)$ under small deformations of $X$, where $\bullet=\Omega^p_X,T_X$. He found two explicit obstructions $O^q_{n,n-1}$, $O^{q-1}_{n,n-1}$ and proved that the dimension of $H^q(X,\bullet)$ does not jump if and only if $O^q_{n,n-1}\equiv 0$ and $O^{q-1}_{m,m-1}\equiv 0$ for all $n,m\geq 1$.

In this paper, we generalize Ye's results to a much more general setting, namely, when $X$ is a compact complex manifold and $E$ is an arbitrary holomorphic vector bundle on $X$. Let $(\mathcal{X},\mathcal{E})$ be a small deformation of $(X,E)$ over a polydisk $B$ centered at the origin in some finite dimensional complex vector space.
We assume that $\mathcal{E}$ is flat over $B$ via the proper holomorphic submersion $\pi:\mathcal{X}\rightarrow B$. Let $\mathcal{X}_t := \pi^{-1}(t)$ and $\mathcal{E}_t := \mathcal{E}|_{\mathcal{X}_t}$. We are interested in characterizing when the dimension $\dim_{\bC}H^q(\mathcal{X}_t,\mathcal{E}_t)$ stays constant near $t = 0$.

Following \cite{Ye_jumping, Ye_jumping_tangent}, we formulate the jumping phenomenon of $\dim_{\bC}H^q(\mathcal{X}_t,\mathcal{E}_t)$ as an extension problem, namely, whether we can extend a nonzero element in $H^q(X,E)$ to one in a nearby fiber $H^q(\mathcal{X}_t,\mathcal{E}_t)$. In general, such extensions may not exist and it suffices to find obstructions to this extension problem. We will see that Ye's explicit formulae for the obstructions can be generalized to our general setting.
On the other hand, while Ye applied a version of Grauert's direct image theorem, which states that $R^q\pi_*\mathcal{E}$ is a quotient of two locally free sheaves of finite ranks over $B$, thereby allowing him to apply an algebraic approach,
here we adapt a differential-geometric approach, following \cite{Huang95, CS_pair}.

We will formulate the problem directly as extending $E$-valued differential forms over $B$, which means that, in contrast to \cite{Ye_jumping, Ye_jumping_tangent}, we are going to work with sheaves of {\em infinite rank}. A key step is to obtain an explicit description of $R^q\pi_*\mathcal{E}$, using an acyclic resolution $(\mathcal{D}^{\bullet},\bar{D}^{\bullet})$ of the sheaf $\mathcal{E}$ constructed from the differential operators $\bar{D}^\bullet$ studied in \cite{Huang95, CS_pair} (see Section \ref{sec:acyclic_resol}). The operators $\bar{D}^{\bullet}$ capture the holomorphic structures of the deformed pairs $\{(\mathcal{X}_t,\mathcal{E}_t)\}_{t\in B}$ (see \cite{CS_pair} or Section \ref{sec:def_pair} in this paper).
It turns out that essentially the same strategy as in Ye's proofs works.
An advantage of our geometric approach is that the computation of the obstructions becomes much neater and more transparent, as compared to the \v{C}ech calculations in \cite{Ye_jumping, Ye_jumping_tangent}.
Our main result is as follows (see Section \ref{sec:obstr}, in particular, Theorem \ref{thm:main_2} and Equations \eqref{eqn:obstruction} $\&$ \eqref{eqn:obstruction2} for the details):

\begin{theorem}\label{thm:main}
Let $\{(A(t),\varphi(t))\}_{t\in B}$ be the family of Maurer-Cartan elements associated to the small deformation $(\mathcal{X},\mathcal{E})$ of $(X,E)$. We define the {\em $n$-th order obstruction maps} $O^{i}_{n,n-1}:H^i((\pi_*\mathcal{D}^{\bullet})_0\otimes\mathcal{O}_{B,0}/m_0^n)\rightarrow H^{i+1}((\pi_*\mathcal{D}^{\bullet})_0\otimes\mathcal{O}_{B,0}/m_0^n)$, where $i=q, q - 1$, by
$$O^{i}_{n,n-1}\left([\alpha_{n-1}]\right)=\left[t^{n-1}\sum_{j=0}^{n-1}(\varphi^{n-j}\lrcorner\nabla+A^{n-j})\alpha^j_{n-1}\right].$$
Then the function $t\mapsto\dim_{\mathbb{C}}H^q(\mathcal{X}_t,\mathcal{E}_t)$ is locally constant if and only if $O_{m,m-1}^q\equiv 0$ and $O^{q-1}_{n,n-1}\equiv 0$ for all $m,n\geq 1$.
\end{theorem}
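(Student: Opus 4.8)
The plan is to replace the coherent sheaf $\mathcal{E}$ by an acyclic resolution and then run an obstruction analysis directly on $E$-valued forms. Let $(\mathcal{D}^\bullet,\bar{D}^\bullet)$ be the resolution of $\mathcal{E}$ built from the operators $\bar{D}^\bullet$ of \cite{Huang95,CS_pair}. Since the $\mathcal{D}^i$ are fine along the fibres of $\pi$, they are $\pi$-acyclic, so $R^q\pi_*\mathcal{E}=\mathcal{H}^q(\pi_*\mathcal{D}^\bullet)$, and, crucially, the complex $K^\bullet:=(\pi_*\mathcal{D}^\bullet)_0$ of $R:=\mathcal{O}_{B,0}$-modules is flat (its terms are spaces of smooth families of $E$-valued forms) and computes cohomology under base change: writing $R_n:=R/\mathfrak{m}_0^n$, the group $H^i(K^\bullet\otimes R_n)$ is exactly the one appearing in the definition of $O^i_{n,n-1}$, and $H^i(K^\bullet\otimes k(t))=H^i(\mathcal{X}_t,\mathcal{E}_t)$ for $t$ near $0$, with differential $\bar{D}_t=\bar\partial+\sum_{k\geq1}(\varphi^k\lrcorner\nabla+A^k)t^k$, where $\bar\partial=\bar{D}_0$ is the Dolbeault operator of $E$ and $A(t)=\sum A^kt^k$, $\varphi(t)=\sum\varphi^kt^k$ are the Taylor expansions of the Maurer--Cartan element. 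Shrinking $B$, Grauert's coherence theorem lets us replace $K^\bullet$ by a quasi-isomorphic bounded complex $L^\bullet$ of finite free $\mathcal{O}_B$-modules computing $R^\bullet\pi_*\mathcal{E}$ universally; then $\dim_{\bC}H^q(\mathcal{X}_t,\mathcal{E}_t)=\dim L^q-\mathrm{rk}\,d^{q-1}(t)-\mathrm{rk}\,d^q(t)$, so by lower semicontinuity of the ranks this is locally constant near $0$ precisely when neither $\mathrm{rk}\,d^{q-1}$ nor $\mathrm{rk}\,d^q$ drops at $0$, i.e.\ $d^{q-1}(0)$ and $d^q(0)$ have maximal rank.

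The link with the obstruction maps is the following chain of equivalences, for $i\in\{q,q-1\}$: "$d^i(0)$ has maximal rank" $\Leftrightarrow$ "$d^i$ has constant rank over $\mathrm{Spec}\,R$, hence $\ker(d^i)$ is a free direct summand of $L^i$ and $\ker(d^i\otimes R_{n+1})\twoheadrightarrow\ker(d^i\otimes R_n)$ for all $n$" $\Leftrightarrow$ "$H^i(K^\bullet\otimes R_{n+1})\twoheadrightarrow H^i(K^\bullet\otimes R_n)$ for all $n$" (passing to the quotient by $\mathrm{im}\,d^{i-1}$) $\Leftrightarrow$ "the connecting map $\delta_n\colon H^i(K^\bullet\otimes R_n)\to H^{i+1}(K^\bullet\otimes\mathfrak{m}_0^n/\mathfrak{m}_0^{n+1})\cong H^{i+1}(X,E)$ of the short exact sequence $0\to K^\bullet\otimes\mathfrak{m}_0^n/\mathfrak{m}_0^{n+1}\to K^\bullet\otimes R_{n+1}\to K^\bullet\otimes R_n\to 0$ vanishes for all $n$". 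It then remains to compute $\delta_n$ through the resolution. Given $[\alpha_{n-1}]\in H^i(K^\bullet\otimes R_n)$ represented by $\alpha_{n-1}=\sum_{j=0}^{n-1}\alpha^j_{n-1}t^j$ with $\bar{D}\alpha_{n-1}\equiv0\bmod\mathfrak{m}_0^n$, one lifts and applies $\bar{D}$: the order-$n$ term of $\bar{D}\alpha_{n-1}$ is exactly $\sum_{j=0}^{n-1}(\varphi^{n-j}\lrcorner\nabla+A^{n-j})\alpha^j_{n-1}$, and a short computation using $\bar{D}^2=0$ — the Maurer--Cartan equation for $(A(t),\varphi(t))$ — together with the vanishing of the lower-order terms shows that this form is $\bar\partial$-closed and that its cohomology class depends only on $[\alpha_{n-1}]$. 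Multiplying by $t^{n-1}$ to land in $H^{i+1}(K^\bullet\otimes R_n)$ identifies $\delta_n$ with the map $O^i_{n,n-1}$ of the statement.

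Assembling the pieces: if $O^q_{m,m-1}\equiv0$ and $O^{q-1}_{n,n-1}\equiv0$ for all $m,n$, the inverse systems $\{H^q(K^\bullet\otimes R_n)\}_n$ and $\{H^{q-1}(K^\bullet\otimes R_n)\}_n$ have surjective transition maps, so (Milnor sequence, with $\varprojlim^1$ automatically vanishing because the $H^j(K^\bullet\otimes R_n)$ are finite-dimensional over $\bC$) the natural maps $\widehat{(R^q\pi_*\mathcal{E})_0}\otimes k\to H^q(X,E)$ and $\widehat{(R^{q-1}\pi_*\mathcal{E})_0}\otimes k\to H^{q-1}(X,E)$ are surjective; by the cohomology-and-base-change formalism they are then isomorphisms and force $R^q\pi_*\mathcal{E}$ to be locally free near $0$ with the base-change map an isomorphism there, whence by Grauert's Theorem~\ref{prop:locally_freeness} the function $t\mapsto\dim_{\bC}H^q(\mathcal{X}_t,\mathcal{E}_t)$ is locally constant. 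Conversely, local constancy gives that $d^{q-1}(0)$ and $d^q(0)$ have maximal rank, so by the chain of equivalences above every $O^q_{m,m-1}$ and every $O^{q-1}_{n,n-1}$ vanishes.

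The main obstacle is the first step, not the obstruction bookkeeping: constructing the resolution $(\mathcal{D}^\bullet,\bar{D}^\bullet)$ for an \emph{arbitrary} holomorphic bundle $\mathcal{E}$, proving that the infinite-rank sheaves $\pi_*\mathcal{D}^\bullet$ genuinely compute $R^\bullet\pi_*\mathcal{E}$ with the correct base-change behaviour, and matching $\bar{D}^\bullet$ with the family of Maurer--Cartan elements precisely enough that its order-$n$ term is visibly the operator appearing in the statement. Once this dictionary is in place, the computation of the obstructions is exactly the transparent manipulation sketched above, which is what makes this differential-geometric route lighter than the \v{C}ech computations of \cite{Ye_jumping,Ye_jumping_tangent}.
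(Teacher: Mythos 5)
Your architecture is sound and, for the sufficiency direction, genuinely different from the paper's. The paper never reduces to a finite free complex over a neighborhood: it proves directly (Proposition \ref{prop:extension} and Corollary \ref{prop:nonexact}) that vanishing of the obstructions yields, for each class in $H^q(X,E)$, a \emph{formal} $\bar D_t$-closed extension, then invokes the convergence argument of Appendix \ref{sec:covergence} (elliptic regularity for the gauge-fixed equation $\alpha(t)+\dbar_E^*G_E(\varphi(t)\lrcorner\nabla+A(t))\alpha(t)=0$) to get an honest holomorphic family, and concludes by injectivity into $H^q(\mathcal{X}_t,\mathcal{E}_t)$ plus upper semicontinuity. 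You instead run everything through a bounded complex $L^\bullet$ of finite free modules computing $R^\bullet\pi_*\mathcal{E}$ universally, reduce to constancy of $\operatorname{rk}d^{q-1}$ and $\operatorname{rk}d^q$, and close with cohomology-and-base-change plus Theorem \ref{prop:locally_freeness}. This is closer to Ye's original algebraic method; it buys you a complete bypass of the analytic convergence argument, at the price of needing the stronger, neighborhood version of the Grauert complex (the paper's Proposition \ref{prop:hom_alg} only produces $L^\bullet$ over the local ring $\mathcal{O}_{B,0}$, whose $\operatorname{Spec}$ has two points; your rank formula at actual nearby $t$ needs $L^\bullet\otimes k(t)\simeq H^\bullet(\mathcal{X}_t,\mathcal{E}_t)$ over an open set, which is the Forster--Knorr/Kiehl--Verdier refinement). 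The paper's route, conversely, keeps the input from Grauert minimal and produces convergent extensions of cohomology classes, which it reuses in Section \ref{sec:CY_stable}.

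One step needs shoring up. The connecting map $\delta_n$ of $0\to K^\bullet\otimes\mathfrak m_0^n/\mathfrak m_0^{n+1}\to K^\bullet\otimes R_{n+1}\to K^\bullet\otimes R_n\to0$ lands in $H^{i+1}(X,E)$ and is the paper's $O^i_n$, not $O^i_{n,n-1}$. Multiplication by $t^{n-1}$ from $H^{i+1}(K^\bullet\otimes k(0))$ into $H^{i+1}(K^\bullet\otimes R_n)$ is \emph{not} injective in general, so $O^i_{n,n-1}[\alpha_{n-1}]=0$ does not immediately give $\delta_n[\alpha_{n-1}]=0$ class by class; your chain of equivalences therefore proves the theorem with $O^i_n$ in place of $O^i_{n,n-1}$. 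To get the statement as phrased you need the descent step the paper supplies in the unnumbered proposition before Corollary \ref{prop:nonexact}: if $O^i_n[\alpha_{n-1}]\neq0$ then some $O^i_{n',n'-1}$ with $n'\leq n$ is already nonzero on a suitable truncation, so the two families of vanishing conditions agree in the aggregate. With that inserted (and the routine verification, via flatness of the $K^j$ and the quasi-isomorphism $K^\bullet\to L^\bullet$, that $\delta_n$ computed on $E$-valued forms is the order-$n$ Taylor coefficient $\sum_{j=0}^{n-1}(\varphi^{n-j}\lrcorner\nabla+A^{n-j})\alpha^j_{n-1}$ of $\bar D_t\alpha_{n-1}$), your argument goes through.
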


We apply this theorem to study the jumping phenomenon of the dimension $\dim_{\bb{C}}H^1(\cu{X}_t,\text{End}(T_{\cu{X}_t}))$, which is related to a question raised by physicists \cite{Huybrechts95}. It is conjectured that $\dim_\mathbb{C} H^1(\cu{X}_t,\text{End}(T_{\cu{X}_t}))$ does not jump along {\em any} deformation of a {\em Calabi-Yau} manifold $X$.
What we obtain is the following weaker statement (see Section \ref{sec:CY_stable}):

\begin{theorem}(=Theorem \ref{thm:CY_jumping})\label{thm:application}
Suppose that $X$ is a Calabi-Yau manifold such that the deformation of the pair $(X, T_X)$ is unobstructed, then $\dim_{\mathbb{C}}H^1(\cu{X}_t,\text{End}(T_{\cu{X}_t}))$ does not jump at $t=0$ for any small deformation of $X$ .
\end{theorem}

\section*{Acknowledgment}
We are grateful to Xuanming Ye and Robert Lazarsfeld for various useful discussions via emails. We would also like to thank Conan Leung for encouragement and some useful suggestions, and also the referees for valuable comments. The work of the first named author described in this paper was substantially supported by grants from the Research Grants Council of the Hong Kong Special Administrative Region, China (Project No. CUHK404412 $\&$ CUHK400213).

\section{Deformations of Pairs}\label{sec:def_pair}

In this section, we briefly review the deformation theory of a pair $(X,E)$, where $X$ is a compact complex manifold and $E$ is a holomorphic vector bundle on $X$, following the exposition in \cite{CS_pair} (cf. \cite{Huang95}), and recall several useful facts.

\begin{definition}
Let $B$ be a small polydisk in some finite dimensional complex vector space containing the origin. A {\em deformation} of $(X,E)$ over $B$ consists of a surjective proper holomorphic submersion $\pi:\mathcal{X}\rightarrow B$ from a complex manifold $\mathcal{X}$ to $B$, together with a holomorphic vector bundle $\mathcal{E}$ on $\mathcal{X}$, such that $\pi^{-1}(0)=X$ and $\mathcal{E}|_{\pi^{-1}(0)}=E$.
\end{definition}

Given such a deformation of $(X,E)$, we put $\mathcal{X}_t:=\pi^{-1}(t)$ and $\mathcal{E}_t:=\mathcal{E}|_{\mathcal{X}_t}$. Since $B$ is contractible, a theorem of Ehresmann
implies that we can choose a diffeomorphism $F:\cu{X}\to X\times B$ and a bundle isomorphism $F':\cu{E}\to E\times B$ covering $F$ such that $F,F'$ are holomorphic with respect to $t$.
Notice that there are two complex structures on $X\times B$: one comes from the push-forward of the complex structure on $\cu{X}$ and the other comes from the product structure on $X\times B$; we denote these complex structures by $\cu{J}$ and $\cu{J}_0$, respectively.

Let $\varphi(t)\in\Omega^{0,1}(T_X)$ be the family of Maurer-Cartan elements which corresponds to the family $\mathcal{X}\rightarrow B$. In \cite{CS_pair}, we considered a holomorphic family of differential operators $\bar{D}_t^q:\Omega^{0,q}(E)\rightarrow\Omega^{0,q+1}(E)$ defined locally by
$$\bar{D}_t^q\left(\sum_j\alpha_j\otimes e_j(t)\right) := \sum_j(\dbar+\varphi(t)\lrcorner\partial)\alpha_j\otimes e_j(t),$$
where $\{e_j(t)\}$ is the push-forward of a local holomorphic frame on $\mathcal{E}_t$ by $F'$.
By choosing a Hermitian metric on $E$, one can express the operator $\bar{D}_t$, in terms of the associated Chern connection $\nabla$, as
$$\bar{D}_t^q = \dbar_E + \varphi(t)\lrcorner\nabla + A(t),$$
for some $A(t)\in\Omega^{0,1}(\text{End}(E))$.

Then (one direction of) Theorem 1.2 in \cite{CS_pair} says that the family of elements $(A(t),\varphi(t))\in\Omega^{0,1}(A(E))$ satisfies the Maurer-Cartan equation
$$\dbar_{A(E)}(A(t),\varphi(t))+\frac{1}{2}[(A(t),\varphi(t)),(A(t),\varphi(t))]=0$$
for $t \in B$; here $A(E)$ is the Atiyah extension of $E$. This in turn is equivalent to the fact that the family of operators $\{\bar{D}_t^q\}$ satisfies the integrability condition:
$$\bar{D}_t^q\bar{D}_t^{q-1}=0.$$

Another important feature of the operator $\bar{D}_t$, which is going to be useful later, is that its cohomology computes precisely the Dolbeault cohomology of $(\mathcal{X}_t,\mathcal{E}_t)$:
\begin{proposition}[\cite{CS_pair}, Proposition 3.13]\label{lem:unnatural_isom}
For each fixed $t\in B$, we have
$$H^q(\mathcal{X}_t,\mathcal{E}_t)\cong H^q((\pi_*\mathcal{D}^{\bullet})_t\otimes k(t))\cong H^q\left(\Omega^{0,\bullet}(E),\bar{D}_t\right),$$
for any $q\geq 0$.
\end{proposition}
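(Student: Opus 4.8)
The plan is to deduce both isomorphisms from a single identification with the $\bar D_t$-complex. Thus it suffices to prove (i) $H^q(\Omega^{0,\bullet}(E),\bar D_t)\cong H^q(\mathcal{X}_t,\mathcal{E}_t)$, and (ii) $(\pi_*\mathcal{D}^{\bullet})_t\otimes k(t)$ is isomorphic, as a complex, to $(\Omega^{0,\bullet}(E),\bar D_t)$; chaining these and passing to cohomology yields the statement.

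For (i) I would show that $\bar D_t$ is just the Dolbeault operator $\dbar_{\mathcal{E}_t}$ of $\mathcal{E}_t\to\mathcal{X}_t$ transported to the fixed $C^\infty$ data $(X,E)$ via the diffeomorphism $F$ and the bundle isomorphism $F'$. Indeed, $\varphi(t)$ is by construction the Maurer--Cartan element encoding the complex structure that $F$ induces on $X$ from $\mathcal{X}_t$ (integrable, by the Maurer--Cartan equation together with Newlander--Nirenberg), and the local formula $\bar D_t(\sum_j\alpha_j\otimes e_j(t))=\sum_j(\dbar+\varphi(t)\lrcorner\partial)\alpha_j\otimes e_j(t)$, with $\{e_j(t)\}$ the $F'$-images of local holomorphic frames of $\mathcal{E}_t$, says precisely that a smooth section $s$ of $E$ is $\bar D_t$-closed if and only if $(F')^{-1}s$ is a local holomorphic section of $\mathcal{E}_t$. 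Hence $(F,F')$ induces an isomorphism of complexes $(\Omega^{0,\bullet}(\mathcal{X}_t,\mathcal{E}_t),\dbar_{\mathcal{E}_t})\xrightarrow{\sim}(\Omega^{0,\bullet}(E),\bar D_t)$; since $\mathcal{X}_t$ is a compact complex manifold and $\mathcal{E}_t$ is locally free, the Dolbeault theorem identifies the left-hand cohomology with $H^q(\mathcal{X}_t,\mathcal{E}_t)$. (Equivalently, $\bar D_t^2=0$, so by Koszul--Malgrange $(E,\bar D_t)$ is a holomorphic bundle over $X$ with its $\varphi(t)$-complex structure, holomorphically isomorphic via $F'$ to $\mathcal{E}_t$.)

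For (ii) I would unwind the construction of $(\mathcal{D}^{\bullet},\bar D^{\bullet})$ recalled in Section \ref{sec:acyclic_resol}. After transport by $(F,F')$, the stalk $(\pi_*\mathcal{D}^q)_t$ consists of germs at $t$ of $E$-valued relative $(0,q)$-forms on $X\times B$ that are holomorphic in the $B$-direction, i.e.\ (a completion of) $\Omega^{0,q}(X,E)\otimes_{\bC}\mathcal{O}_{B,t}$, and on it $\bar D^q=\dbar_E+\varphi\lrcorner\nabla+A$ commutes with multiplication by germs of holomorphic functions on $B$ (all of $\dbar_E,\nabla,\varphi,A$ act along $X$, and $\varphi,A$ are holomorphic in $B$), so $\bar D^q$ is $\mathcal{O}_{B,t}$-linear. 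Tensoring the stalk over $\mathcal{O}_{B,t}$ with $k(t)=\mathcal{O}_{B,t}/\mathfrak{m}_t$ is then evaluation at $t$: it gives $(\pi_*\mathcal{D}^q)_t\otimes k(t)\cong\Omega^{0,q}(X,E)$, with induced differential the value of $\bar D$ at $t$, namely $\dbar_E+\varphi(t)\lrcorner\nabla+A(t)=\bar D_t$. This is the isomorphism of complexes required in (ii).

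The step demanding the most care is (ii): one must use the \emph{correct} model for $\mathcal{D}^q$ --- the sheaf built from the operators $\bar D^{\bullet}$, which is holomorphic in the base directions --- rather than the full sheaf of smooth $\mathcal{E}$-valued $(0,q)$-forms on $\mathcal{X}$, since for the latter the functor $-\otimes k(t)$ would still retain the antiholomorphic base directions and the $d\bar w$ components and would fail to return the fibrewise complex. Once the right model is fixed, checking that $\bar D^q$ reduces to $\bar D^q_t$ on the fibre and that $\mathfrak{m}_t\cdot(\pi_*\mathcal{D}^q)_t$ is exactly the kernel of restriction to $\mathcal{X}_t$ is routine, and all the inputs to (i) --- integrability, Newlander--Nirenberg/Koszul--Malgrange, the Dolbeault theorem --- are classical.
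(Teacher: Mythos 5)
This proposition is not proved in the paper at all; it is quoted from \cite{CS_pair}, Proposition 3.13, so there is no internal proof to compare against. Judged on its own terms, your proposal has the right architecture and part (ii) is essentially correct: $\bar{D}^q$ is $\mathcal{O}_{B,t}$-linear, evaluation at $t$ identifies $(\pi_*\mathcal{D}^q)_t\otimes k(t)$ with $\Omega^{0,q}(E)$ (one should say a word about why the kernel of evaluation is exactly $\mathfrak{m}_t\cdot(\pi_*\mathcal{D}^q)_t$, i.e.\ why a germ of a holomorphic family vanishing at $t$ is divisible by $s-t$ within the same space, but this is genuinely routine), and the induced differential is $\bar{D}_t$.

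The gap is in step (i). From the fact that the degree-zero kernels agree --- $\bar{D}_t$-closed sections of $E$ correspond to holomorphic sections of $\mathcal{E}_t$ --- you conclude ``hence $(F,F')$ induces an isomorphism of complexes'' with the Dolbeault complex of $(\mathcal{X}_t,\mathcal{E}_t)$. That inference does not hold, and the asserted isomorphism of cochain complexes is not literally true in degrees $q\geq 1$: the pullback $F^*$ carries $\mathcal{J}$-$(0,q)$-forms to $(0,q)$-forms for the deformed structure $J_t$ on $X$, and the subsequent identification of $\Omega^{0,q}_{J_t}$ with $\Omega^{0,q}_{J_0}$ (valid for small $t$) does not intertwine $\dbar_{\mathcal{E}_t}$ with $\dbar_E+\varphi(t)\lrcorner\nabla+A(t)$ on the nose; correction terms appear. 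The same issue affects the Koszul--Malgrange variant, since $\bar{D}_t$ maps into $J_0$-forms rather than $J_t$-forms. What is actually needed, and what makes the statement true, is the abstract de Rham argument: the sheaves $\Omega^{0,q}_{J_0}(E)$ are fine, and the complex $(\Omega^{0,\bullet}_{J_0}(E),\bar{D}_t)$ is \emph{locally exact} in positive degrees, so it is a fine resolution of the kernel sheaf, which you have correctly identified with $\mathcal{O}(\mathcal{E}_t)$; hence its cohomology is $H^q(\mathcal{X}_t,\mathcal{E}_t)$. The local $\bar{D}_t$-Poincar\'e lemma is the one nontrivial input you never establish; it is proved by exactly the device the paper uses in Proposition \ref{lem:acyclic_resolution}, namely that locally $\dbar+\varphi(t)\lrcorner\partial=e^{v(t)}\dbar e^{-v(t)}$ for a gauge $v(t)$, reducing to the ordinary Dolbeault--Grothendieck lemma. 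With that lemma supplied, your argument closes up.
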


\section{An acyclic resolution for $\mathcal{E}$}\label{sec:acyclic_resol}

From this point on, for the purpose of simplifying computations and formulae, we will assume that the base $B$ of the deformation is of complex dimension one. We also abuse notations by writing $\cu{X}$ for the complex manifold $(X\times B,\cu{J})$ and $\cu{E}$ for the vector bundle $E\times B$ equipped with the holomorphic structure induced from $\cu{E}$ via pushing forward by $F':\cu{E}\to E\times B$.

In this section, we will construct an acyclic resolution of the sheaf $\mathcal{E}$ in order to get an explicit description of the direct image sheaf $R^q\pi_*\mathcal{E}$.

To begin with, we define an operator $\dbar_{\cu{E},B}:\Omega_{\cu{J}_0}^{0,q}(\cu{E})\to\Omega_{\cu{J}_0}^{0,q+1}(\cu{E})$ by
$$\dbar_{\cu{E},B}\bigg(\sum_{j}s_je_j(t)\bigg):=\sum_j\dbar_Bs_j\otimes e_j(t).$$
Here $\Omega_{\cu{J}_0}^{0,\bullet}$ is the space of smooth $(0,\bullet)$-forms on $X\times B$ with respect to the product complex structure $\cu{J}_0$. If we choose a different frame $f_k(t)$, then $e_j(t)=\sum_kg_j^k(t)f_k(t)$ for some local smooth functions $g_j^k$ on $X\times B$ which are holomorphic in $t$. Hence $\dbar_{\cu{E},B}$ is well-defined.

For each $q\geq 0$, we define a sheaf of $\cu{O}_{\cu{X}}$-modules $\cu{D}^q$, which plays a key role throughout this paper: Let $\pi_X:\cu{X}\to X$ be the projection onto $X$ (which is not necessarily holomorphic). The sheaf $\cu{U}\mapsto\Omega^{0,k}_{\cu{J}_0}(\cu{U},\cu{E})$ has a typical direct summand given by
$$\til{\cu{D}}^{q,p}:=\pi^*\Omega^{0,p}_B\otimes\pi_X^*\Omega^{0,q}\otimes\cu{E},\quad p+q=k,$$
which carries an $\cu{O}_{\cu{X}}$-module structure via multiplication by $\cu{J}$-holomorphic functions. The operator $\dbar_{\cu{E},B}$ acts on $\bigoplus_{p\geq 0}\til{\cu{D}}^{q,p}$, so we obtain a complex $(\widetilde{\mathcal{D}}^{q,\bullet},\dbar_{\cu{E},B}^{\bullet})$ for each $q$. We then define the sheaf of $\cu{O}_{\cu{X}}$-modules $\cu{D}^q$ by
$$\cu{D}^q:\cu{U}\mapsto\{s\in\Gamma_{smooth}(\cu{U},\til{\cu{D}}^{q,0}) \mid \dbar_{\cu{E},B}s=0\}.$$

Clearly, $\mathcal{D}^{\bullet}\subset\widetilde{\mathcal{D}}^{\bullet,0}$ as $\mathcal{O}_{\mathcal{X}}$-submodules. Since $\bar{D}_t$ varies holomorphically in the variable $t$, it induces a sheaf map $\bar{D}^q:\mathcal{D}^q\rightarrow\mathcal{D}^{q+1}$ for each $q\geq 0$. Moreover, since the kernel of $\bar{D}_t:\Omega^0(E)\to\Omega^{0,1}(E)$ is precisely the space of holomorphic sections of $\cu{E}_t$, the sheaf $\cu{E}$, as a sheaf of $\cu{O}_{\cu{X}}$-modules, can be identified with the following sheaf of $\cu{O}_{\cu{X}}$-modules:
$$\cu{U}\mapsto\{s\in\Gamma_{smooth}(\cu{U},\cu{E}) \mid \bar{D}s=\dbar_{\cu{E},B}s=0\}.$$
The push-forwards of $\til{\cu{D}}^{q,p}$ and $\cu{D}^q$ by $\pi:\cu{X}\to B$ carry natural $\mathcal{O}_B$-module structures via multiplication.

\begin{lemma}\label{lem:no_higher_cohomology_sheaf}
For each $p,q\geq 0$, the sheaf $\widetilde{\mathcal{D}}^{q,p}$ is fine and the complex $(\pi_*\widetilde{\mathcal{D}}^{q,\bullet},\pi_*\dbar_{\cu{E},B}^{\bullet})$ has no higher cohomology sheaves, i.e.,
$$\mathcal{H}^p(\pi_*\widetilde{\mathcal{D}}^{q,\bullet}) = 0$$
for all $p \geq 1$.
\end{lemma}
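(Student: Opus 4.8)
The plan is to establish the two assertions separately: fineness of each $\widetilde{\mathcal{D}}^{q,p}$, and acyclicity of the pushed-forward complex along $\pi$. For fineness, I would note that $\widetilde{\mathcal{D}}^{q,p}$ is a direct summand of the sheaf of smooth $(0,k)$-forms (with $p+q=k$) valued in $\mathcal{E}$, up to the $\mathcal{O}_{\mathcal{X}}$-module structure. Since $\mathcal{X}$ is a smooth manifold, it admits smooth partitions of unity subordinate to any open cover, and multiplication by such smooth bump functions gives the required family of endomorphisms with locally finite supports summing to the identity. The only subtlety is that these are sheaves of $\mathcal{O}_{\mathcal{X}}$-modules, not merely sheaves of $C^\infty$-modules, but a partition of unity by smooth (not necessarily holomorphic) functions still acts as $\mathcal{O}_{\mathcal{X}}$-module endomorphisms because $\mathcal{O}_{\mathcal{X}}$ acts by multiplication and multiplication operators commute; so fineness as sheaves of abelian groups (which is all that is needed for the cohomological conclusion) follows immediately.

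For the vanishing of $\mathcal{H}^p(\pi_*\widetilde{\mathcal{D}}^{q,\bullet})$ for $p\geq 1$, the key point is that the complex $(\widetilde{\mathcal{D}}^{q,\bullet},\dbar_{\mathcal{E},B}^{\bullet})$ is, fiberwise over a point of $B$ and locally over a point of $X$, nothing but a Dolbeault-type complex in the $B$-directions only, with coefficients in the fixed smooth bundle $\pi_X^*\Omega^{0,q}\otimes\mathcal{E}$ pulled back from $X$. Concretely, over a product chart $U\times V\subset X\times B$ the operator $\dbar_{\mathcal{E},B}$ is the $\dbar$-operator of the polydisk $V\subset B$ (here one-dimensional, so the complex has length one) acting on vector-valued smooth forms, and the $\dbar$-Poincaré lemma on a polydisk gives exactness in positive degrees. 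To upgrade this to a statement about the pushforward I would take a basis of open sets for $B$ of the form of polydisks $V$ and compute $(\pi_*\widetilde{\mathcal{D}}^{q,\bullet})(V)=\Gamma(X\times V,\widetilde{\mathcal{D}}^{q,\bullet})=\Omega^{0,\bullet}(V,\Gamma(X,\pi_X^*\Omega^{0,q}\otimes\mathcal{E}))$, the Dolbeault complex of $V$ with values in the (infinite-dimensional, but that is harmless) Fréchet space of smooth sections over $X$. Since $V$ is a one-dimensional polydisk, solving $\dbar_B u = f$ for a given $\dbar_B$-closed smooth $(0,1)$-form $f$ on $V$ reduces to the classical Cauchy–Pompeiu integral, and this solution operator is continuous and commutes with taking values in an auxiliary Fréchet space, so it produces a solution with values in smooth sections over $X$. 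Hence the complex of sections over each such $V$ is exact in degrees $\geq 1$, and since these $V$ form a basis, the higher cohomology sheaves vanish.

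I would then remark that the cohomology sheaf $\mathcal{H}^0$ is by construction the kernel of $\pi_*\dbar_{\mathcal{E},B}^{0}$, which recovers the sheaf $\cu{D}^q$ (sections annihilated by $\dbar_{\mathcal{E},B}$), so the complex $(\pi_*\widetilde{\mathcal{D}}^{q,\bullet},\pi_*\dbar_{\mathcal{E},B}^\bullet)$ is a resolution of $\cu{D}^q$; but since only the vanishing statement is claimed here, that observation is a bookkeeping aside rather than part of the proof.

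The main obstacle I anticipate is the care needed in the infinite-dimensional step: one must check that the $\dbar$-Poincaré lemma for polydisks goes through with values in the Fréchet space $C^\infty(X,\pi_X^*\Omega^{0,q}\otimes\mathcal{E})$ — i.e. that the integral solution operator preserves smoothness in the $X$-variables and commutes with differentiation in those variables — rather than any genuinely new difficulty. This is standard (the Cauchy kernel does not involve the $X$-variables at all, so differentiation under the integral sign is unproblematic), but it is the one place where one cannot simply cite a finite-dimensional result verbatim. The reduction to $B$ being one-dimensional, fixed at the start of this section, is what keeps this step to a single application of the Cauchy–Pompeiu formula.
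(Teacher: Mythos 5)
Your proof is correct, and it rests on the same analytic input as the paper's --- the $\dbar$-lemma on a polydisk in $B$ with smooth dependence on the $X$-variables --- but you package it differently. The paper localizes in $X$: it restricts a $\dbar_{\cu{E},B}$-closed form to charts $U_i\times W$, solves $\dbar_B\beta_{I,i}=\alpha_{I,i}$ chart by chart (invoking the proof of the Dolbeault--Grothendieck lemma to keep the primitive smooth in $z$), and then glues the local primitives with a partition of unity $\{\psi_i\}$ on $X$, using that $\dbar_B(\psi_i\beta_{I,i})=\psi_i\,\dbar_B\beta_{I,i}$ because the $\psi_i$ are independent of $t$ and that $\sum_i\psi_i\alpha_{I,i}$ reassembles the global form. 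You avoid the gluing step entirely by working globally in $X$ from the outset, identifying $\Gamma(\pi^{-1}(V),\widetilde{\mathcal{D}}^{q,\bullet})$ with the Dolbeault complex of $V\subset B$ valued in the Fr\'echet space of smooth sections over $X$ and applying the Cauchy--Pompeiu solution operator there; the price is the exponential-law identification and a vector-valued $\dbar$-lemma, both standard. Two points worth making explicit in your version: (i) the identification of $\dbar_{\cu{E},B}$ with the plain $\dbar_B$ on coefficients in a fixed product frame of $E\times B$ uses that the pushed-forward holomorphic frames $e_j(t)$ depend holomorphically on $t$, so the frame change contributes no $d\bar{t}$ term; (ii) the one-variable $\dbar$-lemma is most cleanly applied on relatively compact subdisks, which suffices since you only need exactness on a basis before sheafifying --- you already note this. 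Your fineness argument matches the paper's.
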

\begin{proof}
Fineness is clear because we can apply a partition of unity to conclude that $\widetilde{\mathcal{D}}^{q,p}$ has no higher direct images.

To prove that $(\pi_*\widetilde{\mathcal{D}}^{q,\bullet},\pi_*\dbar_{\cu{E},B}^{\bullet})$ has no higher cohomology, we recall that $\mathcal{H}^p(\pi_*\widetilde{\mathcal{D}}^{q,\bullet})$ is the sheafification of
$$W\mapsto H^p(\Gamma(\pi^{-1}(W),\widetilde{\mathcal{D}}^{q,\bullet})).$$
It suffices to prove that $H^p(\Gamma(\pi^{-1}(W),\widetilde{\mathcal{D}}^{q,\bullet}))=0$ for any polydisk $W\subset B$ and all $p\geq 1$. Let $\alpha\in\Gamma(\pi^{-1}(W),\widetilde{\mathcal{D}}^{q,p})$ and $\{U_i\}$ be a locally finite open covering of $X\subset\pi^{-1}(W)$ by coordinates charts. Let $\alpha_i$ be the restriction of $\alpha$ on $U_i\times W$. Write
$$\alpha_i=\sum_{I,J}\alpha_{IJ,i}(z,\bar{z},t,\bar{t})d\bar{t}^J\otimes d\bar{z}^I=:\sum_{I}\alpha_{I,i}\otimes d\bar{z}^I.$$
Then $\dbar_{\cu{E},B}\alpha=0$ simply means that, for each $I$,
$$0 = \dbar_{\cu{E},B}\left(\sum_{J}\alpha_{IJ,i}d\bar{t}^J\right) = \dbar_B\alpha_{I,i}.$$

Hence, for fixed $z$, we can apply the Dolbeault lemma on $W$ to conclude that
$$\alpha_{I,i}=\dbar_B\beta_{I,i},$$
for some $\beta_{I,i}\in\Omega^{0,p-1}_B(W)$. Since $\alpha_{I,i}$ varies smoothly in $z$ and $\bar{z}$, we see from the proof of the Dolbeault-Grothendieck lemma that $\beta_{I,i}$ can be chosen to be smooth in $z$ as well. Let $\{\psi_i\}$ be a partition of unity on $X$ subordinate to the covering $\{U_i\}$. Define
$$\beta:=\sum_{I,i}\psi_i\beta_{I,i}\otimes d\bar{z}^I.$$
Then $\beta\in\Gamma(\pi^{-1}(W),\widetilde{\mathcal{D}}^{q,p-1})$ and
$$\dbar_{\cu{E},B}\beta = \sum_{I,i}\psi_i(\dbar_B\beta_{I,j})\otimes d\bar{z}^I = \sum_{I,i}\psi_i\alpha_{I,i}\otimes d\bar{z}^I = \left(\sum_i\psi_i\right)\alpha=\alpha.$$
We have the first equality simply because $\{\psi_i\}$ are all independent of $t$ and $\bar{t}$, and the second last equality follows from the fact that $\alpha$ is a global section on $\pi^{-1}(W)$.
\end{proof}

\begin{lemma}\label{lem:acyclic}
For each $q\geq 0$, the sheaf $\mathcal{D}^q$ is acyclic with respect to the left-exact functor $\pi_*$.
\end{lemma}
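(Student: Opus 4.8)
The plan is to exhibit $(\widetilde{\mathcal{D}}^{q,\bullet},\dbar_{\cu{E},B}^{\bullet})$ as a resolution of $\mathcal{D}^q$ by sheaves that are acyclic for $\pi_*$, and then to identify $R^p\pi_*\mathcal{D}^q$ with the cohomology sheaves $\mathcal{H}^p(\pi_*\widetilde{\mathcal{D}}^{q,\bullet})$, which already vanish in positive degrees by Lemma \ref{lem:no_higher_cohomology_sheaf}.

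First I would verify that the augmented sequence
$$0\to\mathcal{D}^q\hookrightarrow\widetilde{\mathcal{D}}^{q,0}\xrightarrow{\dbar_{\cu{E},B}}\widetilde{\mathcal{D}}^{q,1}\xrightarrow{\dbar_{\cu{E},B}}\widetilde{\mathcal{D}}^{q,2}\to\cdots$$
is exact as a sequence of sheaves on $\cu{X}$. Exactness at $\widetilde{\mathcal{D}}^{q,0}$ is nothing but the definition of $\mathcal{D}^q$ as $\ker(\dbar_{\cu{E},B}\colon\widetilde{\mathcal{D}}^{q,0}\to\widetilde{\mathcal{D}}^{q,1})$. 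Exactness at $\widetilde{\mathcal{D}}^{q,p}$ for $p\geq 1$ is a purely local statement: on a product chart $U\times W$ with $W\subset B$ a polydisk, one writes a $\dbar_{\cu{E},B}$-closed germ in a local frame as $\sum_I\alpha_I\otimes d\bar{z}^I$ with each $\alpha_I$ a $\dbar_B$-closed $(0,p)$-form on $W$ depending smoothly on the $X$-coordinates, and then solves $\alpha_I=\dbar_B\beta_I$ with $\beta_I$ still smooth in all variables by the Dolbeault–Grothendieck lemma with parameters — this is exactly the computation carried out in the proof of Lemma \ref{lem:no_higher_cohomology_sheaf}, now performed at the level of stalks rather than on $\pi^{-1}(W)$. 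Hence $(\widetilde{\mathcal{D}}^{q,\bullet},\dbar_{\cu{E},B}^{\bullet})$ is a resolution of $\mathcal{D}^q$.

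Next, by Lemma \ref{lem:no_higher_cohomology_sheaf} each $\widetilde{\mathcal{D}}^{q,p}$ is fine, hence has no higher direct images under $\pi$; that is, it is acyclic for $\pi_*$. Since a resolution by $\pi_*$-acyclic sheaves computes the derived functors, the standard homological-algebra argument yields natural isomorphisms $R^p\pi_*\mathcal{D}^q\cong\mathcal{H}^p(\pi_*\widetilde{\mathcal{D}}^{q,\bullet})$ for all $p\geq 0$. Finally, invoking the second conclusion of Lemma \ref{lem:no_higher_cohomology_sheaf}, $\mathcal{H}^p(\pi_*\widetilde{\mathcal{D}}^{q,\bullet})=0$ for $p\geq 1$, so $R^p\pi_*\mathcal{D}^q=0$ for $p\geq 1$, which is precisely the asserted acyclicity. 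The only genuine content beyond this bookkeeping is the exactness of the augmented complex in the $B$-directions with smooth dependence on the fiber coordinates; but since $B$ is a one-dimensional polydisk this reduces to the classical parametrized solvability of $\partial\beta/\partial\bar{t}=\alpha$, which is already built into the proof of the preceding lemma, so I do not expect any serious obstacle here.
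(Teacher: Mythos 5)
Your proposal is correct and follows essentially the same route as the paper: Lemma \ref{lem:no_higher_cohomology_sheaf} is used to see that $(\widetilde{\mathcal{D}}^{q,\bullet},\dbar_{\cu{E},B}^{\bullet})$ is a fine (hence $\pi_*$-acyclic) resolution of $\mathcal{D}^q$, so $R^p\pi_*\mathcal{D}^q\cong\mathcal{H}^p(\pi_*\widetilde{\mathcal{D}}^{q,\bullet})=0$ for $p\geq 1$. The only difference is that you spell out the stalkwise exactness of the augmented sequence explicitly, which the paper leaves implicit in the phrase ``fine resolution''; this is a harmless elaboration of the same argument.
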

\begin{proof}
Lemma \ref{lem:no_higher_cohomology_sheaf} shows that $(\widetilde{\mathcal{D}}^{q,\bullet},\dbar_{\cu{E},B}^{\bullet})$ is a fine resolution of $\mathcal{D}^q$ and so $R^q\pi_*\mathcal{D}^q\cong\mathcal{H}^p(\pi_*\widetilde{\mathcal{D}}^{q,\bullet})=0$ for all $q\geq 1$.
\end{proof}

\begin{proposition}\label{lem:acyclic_resolution}
The complex of sheaves $(\mathcal{D}^{\bullet},\bar{D}^{\bullet})$ is an acyclic resolution of $\mathcal{E}$ with respect to the left-exact functor $\pi_*$. In particular, we have
$$R^q\pi_*\mathcal{E}\cong \mathcal{H}^q(\pi_*\mathcal{D}^{\bullet})$$
as $\mathcal{O}_B$-modules.
\end{proposition}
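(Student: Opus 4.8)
The plan is to check the two hypotheses that make an acyclic resolution compute a derived functor: first, that $(\mathcal{D}^\bullet,\bar{D}^\bullet)$ really is a resolution of $\mathcal{E}$, i.e. that the complex of sheaves
$0\to\mathcal{E}\to\mathcal{D}^0\xrightarrow{\bar{D}^0}\mathcal{D}^1\xrightarrow{\bar{D}^1}\cdots$
is exact on $\mathcal{X}$; and second, that each $\mathcal{D}^q$ is $\pi_*$-acyclic, which is exactly Lemma \ref{lem:acyclic}. Granting both, the isomorphism $R^q\pi_*\mathcal{E}\cong\mathcal{H}^q(\pi_*\mathcal{D}^\bullet)$ of $\mathcal{O}_B$-modules is the standard homological fact that a resolution by $\pi_*$-acyclic sheaves computes $R^\bullet\pi_*$ (split the resolution into short exact sequences and induct, or invoke the Grothendieck spectral sequence, which degenerates here). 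Everything is $\mathcal{O}_B$-linear, so the identification respects the module structures. Thus the real content is the exactness of the resolution, a local statement on $\mathcal{X}$.

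Exactness at $\mathcal{D}^0$ is immediate: $\ker(\bar{D}^0\colon\mathcal{D}^0\to\mathcal{D}^1)$ consists of the smooth $\mathcal{E}$-valued sections $s$ with $\dbar_{\mathcal{E},B}s=0$ and $\bar{D}s=0$, which is precisely the description of $\mathcal{E}$ recalled at the start of Section \ref{sec:acyclic_resol}. For exactness at $\mathcal{D}^q$ with $q\geq1$ I would pass to the bigraded picture behind the construction: the sheaves $\widetilde{\mathcal{D}}^{q,p}=\pi^*\Omega^{0,p}_B\otimes\pi_X^*\Omega^{0,q}\otimes\mathcal{E}$ form a double complex with horizontal differential $\bar{D}$ (raising $q$) and vertical differential $\dbar_{\mathcal{E},B}$ (raising $p$), the two commuting because the Maurer--Cartan data $(A(t),\varphi(t))$ defining $\bar{D}_t$ depend holomorphically on $t$. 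The key input from the deformation-of-pairs framework of \cite{CS_pair, Huang95} is that, under the diffeomorphism $F$ and the bundle isomorphism $F'$, the Dolbeault operator $\dbar_{\mathcal{E}}$ of the holomorphic bundle $\mathcal{E}$ on the complex manifold $\mathcal{X}$ is identified with $\bar{D}_t+\dbar_{\mathcal{E},B}$, so that the associated total complex $(\mathrm{Tot}\,\widetilde{\mathcal{D}}^{\bullet,\bullet},\bar{D}+\dbar_{\mathcal{E},B})$ is the smooth Dolbeault complex $\Omega^{0,\bullet}_{\mathcal{J}}(\mathcal{E})$, hence a resolution of $\mathcal{E}$.

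Now I would run the spectral sequence of this double complex, taking $\dbar_{\mathcal{E},B}$-cohomology first (equivalently, carry out an explicit staircase argument). By the $\dbar$-Poincar\'e (Dolbeault--Grothendieck) lemma with smooth dependence on the $X$-variables --- exactly the local, not-yet-pushed-forward version of the computation performed in the proof of Lemma \ref{lem:no_higher_cohomology_sheaf} --- each complex $(\widetilde{\mathcal{D}}^{q,\bullet},\dbar_{\mathcal{E},B})$ is exact in positive degree and has $\ker(\dbar_{\mathcal{E},B})=\mathcal{D}^q$ in degree zero. Hence the $E_1$-page is concentrated in $\dbar_{\mathcal{E},B}$-degree zero, where it is the complex $(\mathcal{D}^\bullet,\bar{D}^\bullet)$; the spectral sequence therefore degenerates and yields an isomorphism of cohomology sheaves $\mathcal{H}^q(\mathcal{D}^\bullet,\bar{D}^\bullet)\cong\mathcal{H}^q(\mathrm{Tot}\,\widetilde{\mathcal{D}}^{\bullet,\bullet})$. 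Since the right-hand side equals $\mathcal{E}$ for $q=0$ and vanishes for $q\geq1$, this is exactly the assertion that $(\mathcal{D}^\bullet,\bar{D}^\bullet)$ is a resolution of $\mathcal{E}$, and we are done.

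I expect the only genuinely nontrivial point --- everything else being bookkeeping --- to be the identification $\dbar_{\mathcal{E}}=\bar{D}_t+\dbar_{\mathcal{E},B}$ under $(F,F')$, i.e. verifying that $\bar{D}_t$ is precisely the $X$-directional part of the Dolbeault operator of the deformed holomorphic structure on $E\times B$ (together with the compatibility of the local frames $e_j(t)$ and the usual identification of smooth $(0,\bullet)$-forms for $\mathcal{J}$ and $\mathcal{J}_0$ for small $B$). This is where Proposition \ref{lem:unnatural_isom} and the underlying results of \cite{CS_pair} enter; once it is in place, the double-complex bookkeeping and the parametrized $\dbar$-Poincar\'e lemma are routine, and the acyclicity half has already been supplied by Lemma \ref{lem:acyclic}.
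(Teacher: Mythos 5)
Your reduction of the proposition to (i) $\pi_*$-acyclicity of each $\mathcal{D}^q$ (Lemma \ref{lem:acyclic}) and (ii) exactness of $0\to\mathcal{E}\to\mathcal{D}^0\to\mathcal{D}^1\to\cdots$ matches the paper, as does your treatment of exactness at $\mathcal{D}^0$. The gap is in how you establish exactness at $\mathcal{D}^q$ for $q\geq 1$. Your spectral-sequence argument stands or falls on the claim that the total complex $(\bigoplus_{p+q=k}\widetilde{\mathcal{D}}^{q,p},\bar{D}+\dbar_{\cu{E},B})$ \emph{is} the Dolbeault resolution $\Omega^{0,\bullet}_{\cu{J}}(\cu{E})$ of $\mathcal{E}$, and you offer no proof of this beyond pointing to Proposition \ref{lem:unnatural_isom}. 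But that proposition only asserts an (unnatural) isomorphism of cohomology \emph{groups} for each fixed $t$; it is not a cochain-level or sheaf-level identification of complexes over the total space. Indeed the bigraded pieces $\widetilde{\mathcal{D}}^{q,p}$ are summands of the $(0,k)$-forms for the product structure $\cu{J}_0$, not for $\cu{J}$, so the two complexes are not literally equal, and constructing the required quasi-isomorphism of complexes of sheaves on $\mathcal{X}$ is essentially as hard as the exactness you are trying to prove. As written, the argument is circular at its key step.

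The paper takes a different, more hands-on route that avoids this identification entirely: it proves exactness at the level of stalks by reducing to $\mathcal{E}=\mathcal{O}_{\mathcal{X}}$, using that $\varphi(t)$ is gauge-trivial on a polydisk $U$ to write $\dbar+\varphi(t)\lrcorner\partial=e^{v(t)}\dbar e^{-v(t)}$ and hence obtain exactness of $(\Omega^{0,\bullet}(U)[[t]],\bar{D}_t)$ from the parametrized Dolbeault--Grothendieck lemma, and then descending from formal to convergent power series via flatness of $\mathbb{C}[[t]]$ over $\mathbb{C}\{t\}$ together with a torsion-freeness argument for $H^q(\Omega^{0,\bullet}(U)\{t\})$ (Lemma \ref{lem:dbar} and the ensuing induction). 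This formal-versus-convergent step is a genuine part of the content --- a $\bar{D}_t$-closed germ with convergent coefficients must be shown to have a primitive with convergent coefficients --- and it has no counterpart in your proposal; it is silently absorbed into the unproved identification of the total complex. To salvage your approach you would need to actually construct the quasi-isomorphism between the total complex and $\Omega^{0,\bullet}_{\cu{J}}(\cu{E})$ (including the comparison of frames under $(F,F')$ and of $(0,\bullet)$-forms for $\cu{J}$ and $\cu{J}_0$), at which point the effort should be weighed against the paper's direct computation.
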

\begin{proof}
By Lemma \ref{lem:acyclic}, $R^p\pi_*\mathcal{D}^q=0$ for all $p\geq 1$. It remains to prove that it defines a resolution of $\mathcal{E}$. We need to show that for any point $(x,t)\in\mathcal{X}=X\times B$, the sequence of stalks
$$0\rightarrow\mathcal{E}_{(x,t)}\rightarrow\mathcal{D}^0_{(x,t)}\rightarrow\mathcal{D}^1_{(x,t)}\rightarrow\cdots$$
is exact. The exactness of
$$0\rightarrow\mathcal{E}_{(x,t)}\rightarrow\mathcal{D}^0_{(x,t)}\rightarrow\mathcal{D}^1_{(x,t)}$$
follows from the fact that $\bar{D}^0$ and $\dbar_{\mathcal{E}}$ share the same kernel. For the remaining exactness, we will focus on the case $t=0$; the same argument works for general $t\in B$.

Recall that $\bar{D}_t$ is locally defined by
$$\bar{D}_t\left(\sum_j\alpha_j\otimes e_j(t)\right) := \sum_j(\dbar+\varphi(t)\lrcorner\partial)\alpha_j\otimes e_j(t),$$
so it suffices to prove the exactness for the case $\mathcal{E}=\mathcal{O}_{\mathcal{X}}$.

We would like to first work over $\mathbb{C}[[t]]$ instead of $\mathbb{C}\{t\}$ (where the latter is the ring of convergent power series). Let $U\subset X$ be a polydisk, and denote
\begin{align*}
\Omega^{0,\bullet}(U)\{t\} & := \Omega^{0,\bullet}(U)\otimes_{\mathbb{C}}\mathbb{C}\{t\},\\
\Omega^{0,\bullet}(U)[[t]] & :=\Omega^{0,\bullet}(U)\otimes_{\mathbb{C}}\mathbb{C}[[t]] = \Omega^{0,\bullet}(U)\{t\}\otimes_{\mathbb{C}\{t\}}\mathbb{C}[[t]].
\end{align*}
The Maurer-Cartan element $\varphi(t)$ is gauge equivalent to $0$ on $U$. Hence
$$\dbar+\varphi(t)\lrcorner\partial=e^{v(t)}\dbar e^{-v(t)}$$
for some $v(t)\in\Omega^0(T_U)[[t]]$, where $e^{v(t)}$ acts on $\Omega^{0,q}(U)[[t]]$ by
$$e^{v(t)}\alpha(t)=\sum_{n=0}^{\infty}\frac{(v(t)\lrcorner\partial)^n}{n!}\alpha(t).$$
We can then apply the Dolbeault-Grothendieck lemma with analytic parameter (the $t$-variable) to conclude that $(\Omega^{0,\bullet}(U)[[t]],\bar{D}^{\bullet}_t)$ is an exact complex.

Now, as $\mathbb{C}[[t]]$ is a flat-$\mathbb{C}\{t\}$ module (because $\mathbb{C}[[t]]$ is torsion free and $\mathbb{C}\{t\}$ is a PID), we have
$$H^q(\Omega^{0,\bullet}(U)[[t]])=H^q(\Omega^{0,\bullet}(U)\{t\}\otimes\mathbb{C}[[t]])\cong H^q(\Omega^{0,\bullet}(U)\{t\})\otimes\mathbb{C}[[t]].$$
But we have shown that $H^q(\Omega^{0,\bullet}(U)[[t]])=0$. Therefore, $H^q(\Omega^{0,\bullet}(U)\{t\})\otimes\mathbb{C}[[t]]=0$. If we can show that $H^q(\Omega^{0,\bullet}(U)\{t\})$ is torsion free, we see that $H^q(\Omega^{0,\bullet}(U)\{t\})$ vanishes. Assuming this, we conclude that every $\bar{D}_t$-closed $(0,q)$-form valued power series on $U$ is locally exact.

Now, for any $\bar{D}^q_{(x,0)}$-closed element $\alpha\in\mathcal{D}^q_{(x,0)}$, we can represent it by a $\bar{D}_t$-closed element $\alpha(t)\in\Omega^{0,q}(U)\{t\}$, for some polydisk $U\subset X$. The vanishing of $H^q(\Omega^{0,\bullet}(U)\{t\})$ shows that $\alpha(t)=\bar{D}_t\beta(t)$ for some $\beta(t)\in\Omega^{0,q-1}(U)\{t\}$. This $\beta(t)$ defines an element $\beta\in\mathcal{D}^{q-1}_{(x,0)}$ such that $\bar{D}^{q-1}\beta=\alpha$. This proves the exactness of the complex $(\mathcal{D}^{\bullet},\bar{D}^{\bullet})$.

To complete the proof of the proposition, we need to prove that $H^q(\Omega^{0,\bullet}(U)\{t\})$ is a torsion free $\mathbb{C}\{t\}$-module for $q>1$. In other words, we need to show that if $[\alpha(t)]\in H^q(\Omega^{0,\bullet}(U)\{t\})$ is a nonzero element, then $f(t)\cdot[\alpha(t)]$ is nonzero for all $f(t)\in\mathbb{C}\{t\}-\{0\}$. Since $f(t)$ is invertible if $f(0)\neq 0$, we may assume $f(t)\in (t^N)$ for some $N\geq 1$. We may assume $N$ is chosen such that $f(t)=t^Ng(t)$ with $g(0)\neq 0$. Again, we can invert $g(t)$, so we can further assume $f(t)=t^N$. Then the vanishing of $f(t)\cdot[\alpha(t)]=[f(t)\cdot\alpha(t)]$ means
$$t^N\alpha(t)=\bar{D}_t\beta(t)=(\dbar+\varphi(t)\lrcorner\partial)\beta(t),$$
for some $\beta(t)\in\Omega^{0,q-1}(U)\{t\}$. Since both $\alpha(t)$ and $\beta(t)$ are holomorphic in $t$, the equation shows that $\beta(t)$ is in fact $\bar{D}_t$-closed up to order $N-1$.

We first prove the following
\begin{lemma}\label{lem:dbar}
For any $\dbar$-closed $\beta\in\Omega^{0,q-1}(U)$, $q> 1$, there exists $\beta(t)\in\Omega^{0,q-1}(U)\{t\}$ such that
$$\beta(0)=\beta\text{  and  }\bar{D}_t\beta(t)=0.$$
\end{lemma}
\begin{proof}[Proof of Lemma~\ref{lem:dbar}]
Since $\beta$ is $\dbar$-closed on the polydisk $U$, it must be $\dbar$-exact. Write $\beta=\dbar\alpha$ for some $\alpha\in\Omega^{0,q-2}(U)$. Define
$$\beta(t):=\beta+\varphi(t)\lrcorner\partial\alpha\in\Omega^{0,q-1}(U)\{t\}.$$
Then $\beta(0)=\beta$. Since $\bar{D}_t^2=0$, we have
\begin{align*}
\bar{D}_t\beta(t) & = \dbar\varphi(t)\lrcorner\partial\alpha+\varphi(t)\lrcorner\dbar\partial\alpha+\varphi(t)\lrcorner\partial\beta+\frac{1}{2}[\varphi(t),\varphi(t)]\lrcorner\partial\alpha\\
& = \left(\dbar\alpha+\frac{1}{2}[\varphi(t),\varphi(t)]\right)\lrcorner\partial\alpha+(\varphi(t)\lrcorner\partial\dbar\alpha-\varphi(t)\lrcorner\partial\dbar\alpha)\\
& = 0,
\end{align*}
as desired.
\end{proof}

With this lemma in hand, we see that $\alpha(t)$ is $\bar{D}_t$-exact and this proves that $H^q(\Omega^{0,\bullet}(U)\{t\})$ is torsion free.

Since $\beta_0$ is $\dbar$-closed, we can choose $\beta_1(t)\in\Omega^{0,q-1}(U)\{t\}$ such that
$$\beta_1(0)=\beta_0\text{  and  }\bar{D}_t\beta_1(t)=0.$$
Then we have
$$t^{N-1}\alpha(t) = \bar{D}_t\left(\frac{\beta(t)-\beta_1(t)}{t}\right) = \bar{D}_t\gamma_1(t).$$
If $N=1$, we are done. Otherwise, by evaluating at $t=0$, we see that $\gamma_1(0)$ is $\dbar$-closed. Hence we can find $\beta_2(t)$ such that
$$\beta_2(0)=\gamma_1(0)\text{  and  }\bar{D}_t\beta_2(t)=0.$$
Hence
$$t^{N-2}\alpha(t) = \bar{D}_t\left(\frac{\gamma_1(t)-\beta_2(t)}{t}\right).$$
Repeating this process, we will arrive at the conclusion that
$$\alpha(t)=\bar{D}_t\gamma_N(t)$$
for some $\gamma_N(t)\in\Omega^{0,q-1}(U)\{t\}$. This completes the proof of the proposition.
\end{proof}

\section{Obstructions}\label{sec:obstr}

In this section, we will find out explicitly the obstruction maps for extending a given element of $H^q(X,E)$. In \cite{Ye_jumping, Ye_jumping_tangent}, X. Ye used Grauert's direct image theorem to obtain a complex of locally free $\mathcal{O}_B$-modules of finite ranks to compute the obstruction maps; here we will instead use the infinite-dimensional complex of $\mathcal{O}_B$-modules $(\pi_*\mathcal{D}^{\bullet},\bar{D}^{\bullet})$. We will see that more or less the same strategy of proofs in \cite{Ye_jumping, Ye_jumping_tangent} is going to work in our infinite-dimensional setting as well. We will give most of the details of the proofs in order to make this paper more self-contained.

Recall that Proposition \ref{lem:acyclic_resolution} gives an isomorphism of $\mathcal{O}_B$-modules:
$$R^q\pi_*\mathcal{E}\cong \mathcal{H}^q(\pi_*\mathcal{D}^{\bullet}).$$
Together with Proposition \ref{lem:unnatural_isom}, we see that it is equivalent to work with the sheaf $\mathcal{H}^q(\pi_*\mathcal{D}^{\bullet})$ and the cohomology group $H^q((\pi_*\mathcal{D}^{\bullet})_0\otimes k(0))$.
Tensoring the stalk $(\pi_*\mathcal{D}^{\bullet})_0$ with $\mathcal{O}_{B,0}/m_0^{n+1}$ over $\mathcal{O}_{B,0}$, we obtain a complex
$$((\pi_*\mathcal{D}^{\bullet})_0\otimes\mathcal{O}_{B,0}/m_0^{n+1},\bar{D}_n^\bullet),$$
where $\bar{D}_n^{\bullet}$ is naturally induced from $\bar{D}^{\bullet}$.

Given $\alpha\in\text{ker}(\dbar^q_E)$, and supposing that we have a local extension $\alpha_{n-1}\in\Gamma(U,\pi_*\mathcal{D}^q)$ of $\alpha$ such that
$$j_0^{n-1}(\bar{D}^q\alpha_{n-1})(t)=0,$$
we define the {\em obstruction map}
$$O_{n,n-1}^q:H^q((\pi_*\mathcal{D}^{\bullet})_0\otimes\mathcal{O}_{B,0}/m_0^n)\rightarrow
H^{q+1}((\pi_*\mathcal{D}^{\bullet})_0\otimes\mathcal{O}_{B,0}/m_0^n)$$
by
\begin{equation}\label{eqn:obstruction}
\begin{split}
O_{n,n-1}^q[j_0^{n-1}(\alpha_{n-1})(t)] :=[t^{n-1}\cdot(j_0^n(\bar{D}^q\alpha_{n-1})(t)/t^n)]
\end{split}
\end{equation}

\begin{remark}
The $(n-1)$-st jet can be viewed as an element in $(\pi_*\mathcal{D}^q)_0\otimes\mathcal{O}_{B,0}/m_0^n$. The map $O^q_{n,n-1}$ factors through a map
$$O^q_n:H^q((\pi_*\mathcal{D}^{\bullet})_0\otimes\mathcal{O}_{B,0}/m_0^n)\rightarrow H^{q+1}((\pi_*\mathcal{D}^{\bullet})_0\otimes\mathcal{O}_{B,0}/m_0),$$
given by
$$O^q_n[j_0^{n-1}(\alpha_{n-1})(t)]:=[j^q_0(\bar{D}^q\alpha_{n-1})(t)/t^n].$$
This is well-defined because the cohomology class of $j^q_0(d^q\alpha_{n-1})(t)/t^n$ only depends on the cohomology class of the $(n-1)$-st jet $j_0^{n-1}(\alpha_{n-1})(t)$.

For later use, we also define
$$O_{n,i}^q[j_0^{n-1}(\alpha_{n-1})(t)]:=[t^i\cdot (j^q_0(\bar{D}^q\alpha_{n-1})(t)/t^n)],$$
for $i\geq 0$ and $n\geq 1$
\end{remark}

The following proposition characterizes when an extension exists up to order $n\geq 1$.

\begin{proposition}\label{prop:extension}
For a fixed $n\geq 1$, the following are equivalent:
\begin{itemize}
\item [(1)] For any local section $\alpha_{n-1}$ around $t=0$ such that $j^{n-1}_0(\bar{D}^q\alpha_{n-1})(t)=0$, there exists a local section $\alpha_n$ around $t=0$ such that $j_0^0(\alpha_n-\alpha_{n-1})=0$ and $j_0^n(\bar{D}^q\alpha_n)(t)=0$.
\item [(2)] For any $c_{n-1}\in H^q((\pi_*\mathcal{D}^q)_0\otimes\mathcal{O}_{B,0}/m_0^n)$, there exists $c_n\in H^q((\pi_*\mathcal{D}^q)_0\otimes\mathcal{O}_{B,0}/m_0^{n+1})$ such that $c_n|_{t=0}=c_{n-1}|_{t=0}\in H^q((\pi_*\mathcal{D}^q)_0\otimes k(0))$.
\item [(3)] For any local section $\alpha_{n-1}$ around $t=0$ such that $j_0^{n-1}(\bar{D}^q\alpha_{n-1})(t)=0$, $O_{n,n-1}^q[j^{n-1}_0(\alpha_{n-1})(t)]=0$.
\end{itemize}
\end{proposition}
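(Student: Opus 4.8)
\medskip
\noindent\textbf{Proof plan.}\quad The plan is to prove the equivalences via the chain $(3)\Leftrightarrow(1)\Leftrightarrow(2)$, working throughout with explicit $t$-polynomial representatives for the truncated complexes.

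First I would fix notation. Since $\varphi(t)$ and $A(t)$ are convergent power series vanishing at $t=0$, expand $\bar{D}_t=\dbar_E+\sum_{k\geq 1}t^k D_k$ with $D_k:=\varphi^k\lrcorner\nabla+A^k$, so that $\bar{D}_0=\dbar_E$. A cocycle of $(\pi_*\mathcal{D}^{\bullet})_0\otimes\mathcal{O}_{B,0}/m_0^n$ in degree $q$ is represented by a $t$-polynomial $\sum_{k=0}^{n-1}t^k\alpha_k$, $\alpha_k\in\Omega^{0,q}(E)$, with $\bar{D}_t(\sum_k t^k\alpha_k)\equiv 0\pmod{t^n}$, and coboundaries likewise. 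For $\alpha_{n-1}$ with $j_0^{n-1}(\bar{D}^q\alpha_{n-1})(t)=0$, write $\bar{D}^q\alpha_{n-1}=t^n\omega_n+O(t^{n+1})$ with $\omega_n\in\Omega^{0,q+1}(E)$; comparing order-$n$ terms in $\bar{D}_t\bar{D}^q\alpha_{n-1}=\bar{D}_t^2\alpha_{n-1}=0$ gives $\dbar_E\omega_n=0$, so $t^{n-1}\omega_n$ is a cocycle and $O_{n,n-1}^q[j_0^{n-1}(\alpha_{n-1})(t)]=[t^{n-1}\omega_n]$ is well-defined. Note that, since $\bar{D}^q\alpha_n$ is only required to vanish to order $n$, every modification below involves only finitely many forms; there is no convergence issue and everything reduces to statements about $n$-jets. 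Also $H^q((\pi_*\mathcal{D}^{\bullet})_0\otimes k(0))\cong H^q(\Omega^{0,\bullet}(E),\dbar_E)$ by Proposition \ref{lem:unnatural_isom}, which I use to read the restriction $c|_{t=0}$ as a class in $H^q(X,E)$.

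For $(1)\Leftrightarrow(3)$, fix $\alpha_{n-1}$ with $j_0^{n-1}(\bar{D}^q\alpha_{n-1})(t)=0$. Since $j_0^n(\bar{D}^q\alpha_n)$ depends only on $j_0^n(\alpha_n)$, I may take $\alpha_n=\alpha_{n-1}+\sum_{k=1}^n t^k\gamma_k$ with $\gamma_k\in\Omega^{0,q}(E)$ — the correction has no $t^0$-term because $j_0^0(\alpha_n-\alpha_{n-1})=0$. Expanding $j_0^n(\bar{D}^q\alpha_n)(t)=0$ in powers of $t$ yields the linear system
\begin{gather*}
\dbar_E\gamma_m+\sum_{j=1}^{m-1}D_{m-j}\gamma_j=0\qquad(1\leq m\leq n-1),\\
\dbar_E\gamma_n+\sum_{j=1}^{n-1}D_{n-j}\gamma_j=-\omega_n.
\end{gather*}
On the other hand, $O_{n,n-1}^q[j_0^{n-1}(\alpha_{n-1})(t)]=[t^{n-1}\omega_n]=0$ says $t^{n-1}\omega_n=\bar{D}_t\eta$ in $(\pi_*\mathcal{D}^{\bullet})_0\otimes\mathcal{O}_{B,0}/m_0^n$ for some $\eta=\sum_{k=0}^{n-1}t^k\eta_k$, and expanding this in powers of $t$ produces exactly the same system under the substitution $\eta_k=-\gamma_{k+1}$. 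Hence for this $\alpha_{n-1}$ the correction in $(1)$ exists if and only if the obstruction in $(3)$ vanishes; since every cocycle class in $H^q((\pi_*\mathcal{D}^{\bullet})_0\otimes\mathcal{O}_{B,0}/m_0^n)$ arises from such an $\alpha_{n-1}$, this gives $(1)\Leftrightarrow(3)$.

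For $(1)\Rightarrow(2)$: given $c_{n-1}\in H^q((\pi_*\mathcal{D}^{\bullet})_0\otimes\mathcal{O}_{B,0}/m_0^n)$, represent it by a cocycle $t$-polynomial, regarded (as a polynomial in $t$) as a genuine section $\alpha_{n-1}$ of $\pi_*\mathcal{D}^q$; it satisfies $j_0^{n-1}(\bar{D}^q\alpha_{n-1})(t)=0$, so $(1)$ produces $\alpha_n$ with $j_0^0(\alpha_n-\alpha_{n-1})=0$ and $j_0^n(\bar{D}^q\alpha_n)(t)=0$, and $c_n:=[j_0^n(\alpha_n)(t)]$ does the job since $c_n|_{t=0}=[\alpha_n|_{t=0}]=[\alpha_{n-1}|_{t=0}]=c_{n-1}|_{t=0}$. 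For $(2)\Rightarrow(1)$: given $\alpha_{n-1}$ as in $(1)$, put $c_{n-1}:=[j_0^{n-1}(\alpha_{n-1})(t)]$; by $(2)$ there is $c_n$ with $c_n|_{t=0}=c_{n-1}|_{t=0}$, represented by a cocycle $t$-polynomial $\beta=\sum_{k=0}^n t^k\beta_k$ with $[\beta_0]=[\alpha_{n-1}|_{t=0}]$ in $H^q(X,E)$. Writing $\beta_0-\alpha_{n-1}|_{t=0}=\dbar_E\theta$ for a global $\theta\in\Omega^{0,q-1}(E)$ and replacing $\beta$ by $j_0^n(\beta-\bar{D}_t\theta)$ (which is still a cocycle modulo $t^{n+1}$, since $\bar{D}_t^2\theta=0$), I may assume $\beta_0=\alpha_{n-1}|_{t=0}$; then $\alpha_n:=\beta$, viewed as a polynomial section of $\pi_*\mathcal{D}^q$, verifies $(1)$.

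The step I expect to need the most care is the index shift in $(1)\Leftrightarrow(3)$: one must keep straight that the correction in $(1)$ vanishes at $t=0$ and hence runs over orders $t^1,\dots,t^n$, whereas the primitive $\eta$ in $(3)$ runs over orders $t^0,\dots,t^{n-1}$, and it is precisely $\eta_k=-\gamma_{k+1}$ — together with the sign in front of $\omega_n$ — that identifies the two systems term by term. Everything else (representing classes of the truncated complexes by $t$-polynomials, the vanishing $\dbar_E\omega_n=0$, and the bookkeeping in $(2)\Rightarrow(1)$) is routine.
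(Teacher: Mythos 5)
Your proposal is correct and follows essentially the same route as the paper: the $(1)\Leftrightarrow(2)$ bookkeeping is identical (including the correction by a $\dbar_E$-primitive to match representatives at $t=0$), and your identification $\eta_k=-\gamma_{k+1}$ in $(1)\Leftrightarrow(3)$ is exactly the paper's substitution $\gamma=\alpha_{n-1}-\alpha_n=t\beta$, just written out order by order. The index shift and the sign in front of $\omega_n$ check out, so no gap.
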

\begin{proof}
We shall prove that $(1)\Leftrightarrow (2)$ and $(1)\Leftrightarrow (3)$.

For $(1)\Rightarrow (2):$ Let $c_{n-1}\in H^q((\pi_*\mathcal{D}^{\bullet})_0\otimes\mathcal{O}_{B,0}/m_0^n)$ and $\alpha_{n-1}$ be a local section around $t=0$ such that $j_0^{n-1}(\alpha_{n-1})(t)\in\text{ker}(\bar{D}^q_{n-1})$ represents the class $c_{n-1}$. Then $j_0^{n-1}(\bar{D}^q\alpha_{n-1})(t)=0$. By assumption, we can extend $\alpha_{n-1}$ to a local section $\alpha_n$ around $t=0$ such that $j_0^0(\alpha_n-\alpha_{n-1})(t)=0$ and $j_0^n(\bar{D}^q\alpha_n)(t)=0$. Then $\bar{D}^q_n(j_0^n(\alpha_n)(t))=0\in(\pi_*\mathcal{D}^{q+1})_0\otimes\mathcal{O}_{B,0}/m_0^{n+1}$. Set $c_n:=[j_0^n(\alpha_n)(t)]\in H^q((\pi_*\mathcal{D}^{\bullet})_0\otimes\mathcal{O}_{B,0}/m_0^{n+1})$. Since $j_0^0(\alpha_n-\alpha_{n-1})=0$, we have $c_n|_{t=0}=[j_0^0(\alpha_{n-1})(t)]=c_{n-1}|_{t=0}=0$.

For $(2)\Rightarrow (1):$ Let $\alpha_{n-1}$ be such that $j_0^{n-1}(\bar{D}^q\alpha_{n-1})(t)=0$. Extend $c_{n-1}:=[j^{n-1}_0(\alpha_{n-1})(t)]$ to a class $c_n\in H^q((\pi_*\mathcal{D}^{\bullet})_0\otimes\mathcal{O}_{B,0}/m_0^{n+1})$. Let $\alpha_n$ be local section around $t=0$ such that $j_0^n(\alpha_n)(t)$ represents the class $c_n$. Then $j^n_0(\bar{D}^q\alpha_n)(t)=0$. Since $c_n|_{t=0}=c_{n-1}|_{t=0}$, we have
$$j_0^0(\alpha_n-\alpha_{n-1})=\bar{D}_0^{q-1}\gamma,$$
for some $\gamma\in(\pi_*\mathcal{D}^{q-1})_0\otimes k(0)$. Choose any representative $\gamma'$ of $\gamma$ and define
$$\alpha_n':=\alpha_n-\bar{D}^{q-1}\gamma'.$$
Then $j_0^n(\bar{D}^q\alpha_n')(t)=j_0^n(\bar{D}^q\alpha_n)(t)=0$ and $j_0^0(\alpha_n'-\alpha_{n-1})=0$.

For $(1)\Rightarrow (3):$ Let $\gamma:=\alpha_{n-1}-\alpha_n$. Then
$$\bar{D}_n^q\gamma=j^n_0(\bar{D}^q(\alpha_{n-1}-\alpha_n))(t)=t^n\cdot(j^n_0(\bar{D}^q\alpha_{n-1})(t)/t^n),$$
since $j^{n-1}_0(\bar{D}^q\alpha_{n-1})(t)=j_0^n(d^q\alpha_n)(t)=0$. By assumption, $j_0^0(\gamma)(t)=0$, so $\gamma=t\beta$ for some local section $\beta$ around $t=0$. Hence
$$\bar{D}^q_{n-1}j^{n-1}_0(\beta)(t)=t^{n-1}\cdot(j^n_0(\bar{D}^q\alpha_{n-1})(t)/t^n),$$
which means that $O_{n,n-1}^q[j_0^{n-1}(\alpha_{n-1})(t)]=0$.

For $(3)\Rightarrow (1):$ The vanishing of $O_{n,n-1}^q[j_0^{n-1}(\alpha_{n-1})(t)]$ gives an element $\beta\in(\pi_*\mathcal{D}^q)_0\otimes\mathcal{O}_{B,0}/m_0^n$ such that
$$t^{n-1}\cdot(j^n_0(\bar{D}^q\alpha_{n-1})(t)/t^n)=\bar{D}_n^q\beta.$$
Let $\beta'$ be a local section around $t=0$ representing the germ $\beta$ and set $\alpha_n:=\alpha_{n-1}-t\beta'$. Then
\begin{align*}
j^n_0(\bar{D}^q\alpha_n)(t) & = j^n_0(\bar{D}^q\alpha_{n-1})(t)-t\cdot j^{n-1}_0(\bar{D}^q\beta')(t)\\
& = t^n\cdot(j^n_0(\bar{D}^q\alpha_{n-1})(t)/t^n)-t\cdot \bar{D}_n^q\beta = 0.
\end{align*}
Hence $\alpha_n$ defines an $n$-th order extension of $\alpha$.
\end{proof}

Therefore, if $O_{n,n-1}^q\equiv 0$ for all $n\geq 1$, then by $(1)$ above we obtain a formal element $\alpha(t)$ such that $\bar{D}_t\alpha(t)=0$. In Appendix~\ref{sec:covergence}, we show that after a gauge fixing, $\alpha(t)$ is analytic in a neighborhood around $0\in B$.

\begin{remark}
The radius of convergence of each extension $\alpha(t)$ may be different as $\alpha=\alpha(0)$ varies. However, since $H^q(X,E)$ is finite dimensional, we can simply choose a basis, for instance, one consisting of harmonic forms with respect to a fixed hermitian metric. Then we obtain a minimum radius of convergence, uniform in all $[\alpha]\in H^q(X,E)$.
\end{remark}

Next we shall demonstrate that there is another obstruction for an extension to be nonzero.

\begin{proposition}
A non-exact element $\beta\in\ker(\dbar_E^q)$ admits a local extension $\beta(t)\in\Gamma(U,\pi_*\mathcal{D}^q)$ such that $\beta(t)$ is exact for $t\neq 0$ if and only if there exist $n\geq 1$ and $[j_0^{n-1}(\alpha_{n-1})(t)]\in H^{q-1}((\pi_*\mathcal{D}^{\bullet})_0\otimes\mathcal{O}_{B,0}/m_0^n)$ such that
$$O_n^{q-1}[j_0^{n-1}(\alpha_{n-1})(t)]=[\beta].$$
\end{proposition}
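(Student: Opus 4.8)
The plan is to characterize when a non-exact class $[\beta]\in H^q(X,E)$ can be ``killed'' in a nearby fiber, i.e.\ admits a local extension $\beta(t)\in\Gamma(U,\pi_*\mathcal{D}^q)$ with $\bar{D}_t\beta(t)=0$ and $\beta(t)$ exact for $t\ne 0$. The key observation is that such a $\beta(t)$, being exact for $t\ne 0$ but not for $t=0$, should be writable as $\bar{D}^{q-1}$ applied to a section whose value at $t=0$ fails to be $\bar D_0$-closed, and the failure is measured precisely by an obstruction map $O^{q-1}_n$ in the \emph{lower} degree $q-1$. So the two directions to prove are: (i) if some $O^{q-1}_n[j_0^{n-1}(\alpha_{n-1})(t)]=[\beta]$, then $\beta$ is ``exactly extendable'' in the above sense; and (ii) conversely, if $\beta$ is exactly extendable, then such an $n$ and class exist.

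For direction (i): suppose $\alpha_{n-1}$ is a local section with $j_0^{n-1}(\bar D^q \alpha_{n-1})(t)=0$ in degree $q-1$ (so $[j_0^{n-1}(\alpha_{n-1})(t)]\in H^{q-1}((\pi_*\mathcal{D}^\bullet)_0\otimes\mathcal{O}_{B,0}/m_0^n)$) and $O^{q-1}_n[j_0^{n-1}(\alpha_{n-1})(t)]=[j_0^q(\bar D^{q-1}\alpha_{n-1})(t)/t^n]=[\beta]$. Set $\beta(t):=\bar D^{q-1}\alpha_{n-1}/t^n$, which is a genuine local holomorphic-in-$t$ section of $\pi_*\mathcal{D}^q$ since $\bar D^{q-1}\alpha_{n-1}$ vanishes to order $n$ at $t=0$ by the jet hypothesis. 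Then $\bar D^q\beta(t)=\bar D^q\bar D^{q-1}\alpha_{n-1}/t^n=0$ by the integrability condition $\bar D_t^q\bar D_t^{q-1}=0$ from Section~\ref{sec:def_pair}; for $t\ne 0$, $\beta(t)=\bar D^{q-1}(\alpha_{n-1}/t^n)$ is $\bar D_t$-exact; and $\beta(0)$ represents $\beta$ by the definition of $O^{q-1}_n$. Adjusting $\beta(t)$ by a coboundary at $t=0$ if needed (using that the class, not the representative, is what matters), we get the desired exact extension of $\beta$ itself.

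For direction (ii): given an exact extension $\beta(t)\in\Gamma(U,\pi_*\mathcal{D}^q)$ of $\beta$ with $\bar D_t\beta(t)=0$ and, for each fixed $t\ne 0$, $\beta(t)=\bar D_t\eta_t$ for some $\eta_t$, the task is to produce a \emph{holomorphic-in-$t$} primitive $\alpha(t)\in\Omega^{0,q-1}(E)\{t\}$ with $\bar D_t\alpha(t)=t^n\beta(t)$ for some $n\ge 1$ — equivalently, $\beta(t)$ is ``torsion'' in $H^q(\Omega^{0,\bullet}(E)\{t\})$. This is exactly the kind of statement handled in the proof of Proposition~\ref{lem:acyclic_resolution}: there, torsion-freeness of $H^q(\Omega^{0,\bullet}(U)\{t\})$ for $q>1$ was established, but here the point is precisely that $\beta(t)$ need \emph{not} be globally (on $X$) a coboundary at $t=0$, so the relevant cohomology $H^q(\pi_*\mathcal{D}^\bullet)_0$ can have torsion, and $[\beta]$ generating the torsion at $t=0$ is what we exploit. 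Concretely, since $R^q\pi_*\mathcal{E}\cong\mathcal{H}^q(\pi_*\mathcal{D}^\bullet)$ is a coherent (or at least finitely-generated-behaved) $\mathcal{O}_{B,0}$-module after the Grauert comparison, the class $[\beta(t)]\in\mathcal{H}^q(\pi_*\mathcal{D}^\bullet)_0$ is annihilated by some $t^n$; lifting $t^n[\beta(t)]=0$ to the chain level gives $\alpha_{n-1}\in\Gamma(U,\pi_*\mathcal{D}^{q-1})$ with $\bar D^{q-1}\alpha_{n-1}=t^n\beta(t)+(\text{higher order})$, from which $j_0^{n-1}(\bar D^{q-1}\alpha_{n-1})(t)=0$ and $O^{q-1}_n[j_0^{n-1}(\alpha_{n-1})(t)]=[t^n\beta(t)/t^n]|_{t=0}=[\beta]$, as wanted. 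One should pick $n$ minimal so that $[\beta]\ne 0$, ensuring the class on the left is genuinely in degree $q-1$ cohomology and not itself a coboundary.

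The main obstacle is direction (ii): turning the fiberwise exactness ``$\beta(t)$ exact for each $t\ne 0$'' into the uniform algebraic statement that $[\beta(t)]$ is a torsion element of the stalk $\mathcal{H}^q(\pi_*\mathcal{D}^\bullet)_0$ over $\mathcal{O}_{B,0}$. This requires knowing that $R^q\pi_*\mathcal{E}$ (hence $\mathcal{H}^q(\pi_*\mathcal{D}^\bullet)$) is coherent so that ``exact on a punctured disk'' forces ``killed by a power of $t$'' — Grauert's direct image theorem gives exactly this, and one must be careful that the isomorphism $R^q\pi_*\mathcal{E}\cong\mathcal{H}^q(\pi_*\mathcal{D}^\bullet)$ from Proposition~\ref{lem:acyclic_resolution} is compatible with the $\mathcal{O}_{B,0}$-module structure and with the $t$-multiplication used to define $O^{q-1}_n$. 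Once this coherence/torsion bridge is in place, the rest is bookkeeping with jets already set up in this section.
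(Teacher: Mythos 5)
Your proposal is correct and follows essentially the same route as the paper: the forward direction takes $\beta(t)=\bar{D}^{q-1}(\alpha_{n-1}/t^n)$ (well-defined and holomorphic through $t=0$ by the jet hypothesis, $\bar{D}_t$-closed by $\bar{D}_t^q\bar{D}_t^{q-1}=0$) plus a coboundary correction so that $\beta(0)=\beta$ on the nose, and the converse produces a meromorphic-in-$t$ primitive $\gamma(t)$, sets $\alpha_{n-1}=t^n\gamma(t)$, and evaluates $O_n^{q-1}$. Your use of coherence of $R^q\pi_*\mathcal{E}\cong\mathcal{H}^q(\pi_*\mathcal{D}^{\bullet})$ over the PID $\mathcal{O}_{B,0}$ to turn fiberwise exactness on the punctured disk into ``$[\beta(t)]$ is $t^n$-torsion in the stalk'' is a reasonable elaboration of the step the paper merely asserts (``$\gamma(t)$ can be chosen to be meromorphic in $t$ with pole order $n\geq 1$'').
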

\begin{proof}
Suppose that $O_n^{q-1}[j_0^{n-1}(\alpha_{n-1})(t)]=[\beta]$. Then
$$\beta=j_0^n(\bar{D}^{q-1}\alpha_{n-1})(t)/t^n+\dbar^{q-1}_E\gamma$$
for some $\gamma\in\Omega^{0,q-1}(E)$. Define $\beta(t)$ by
$$\beta(t):=\bar{D}^{q-1}(\alpha_{n-1}(t)/t^n)+\bar{D}^{q-1}\gamma(t), \quad t\neq 0,$$
where $\gamma(t)$ is any extension of $\gamma$. Clearly $\beta(t)$ can be extended through the origin by setting $\beta(0)=\beta$. Then $\beta(t)$ is a $\bar{D}^{q-1}$-exact class and equals $\beta$ at $t=0$. Hence $\beta(t)$ serves as an extension of $\beta$ which is $\bar{D}^{q-1}$-exact for $t\neq 0$.

Conversely, if $\beta(t)$ is an extension of $\beta$ such that
$$\beta(t)=\bar{D}^{q-1}\gamma(t)$$
for $t\neq 0$. Then $\gamma(t)$ can be chosen to be meromorphic in $t$ with pole order $n\geq 1$ at $t=0$. Let $\alpha_{n-1}(t):=t^n\gamma(t)$. Then $\alpha_{n-1}(t)$ is holomorphic in $t$ and
$$O_n^{q-1}[j_0^{n-1}(\alpha_{n-1})(t)]=[j_0^n(\bar{D}^{q-1}(t^n\gamma(t))/t^n]=[j_0^n(t^n\beta(t))/t^n]=[\beta].$$
This completes the proof.
\end{proof}

\begin{proposition}
Let $[j_0^{n-1}(\alpha_{n-1})(t)]\in H^{q-1}((\pi_*\mathcal{D}^{\bullet})_0\otimes\mathcal{O}_{B,0}/m_0^n)$ such that $O_n^{q-1}[j_0^{n-1}(\alpha_{n-1})(t)]\neq 0$. Then there exist $n'\leq n$ and $[j_0^{n'-1}(\alpha_{n'-1})(t)]\in H^{q-1}((\pi_*\mathcal{D}^{\bullet})_0\otimes\mathcal{O}_{B,0}/m_0^{n'})$ such that
$$O_{n,n'-1}^{q-1}[j_0^{n-1}(\alpha_{n-1})(t)]=O_{n',n'-1}^{q-1}[j_0^{n'-1}(\alpha_{n'-1})(t)]\neq 0.$$
\end{proposition}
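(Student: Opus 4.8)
The plan is to reduce the statement to tracking a single $\dbar_E$-closed form. Fix a representative $\alpha_{n-1}$ of the given class, so that $j_0^{n-1}(\bar D^{q-1}\alpha_{n-1})(t)=0$, and let $b\in\Omega^{0,q}(E)$ be the coefficient of $t^n$ in $\bar D^{q-1}\alpha_{n-1}$, i.e., $\bar D^{q-1}\alpha_{n-1}=t^n b+O(t^{n+1})$; since $\bar D^q\bar D^{q-1}=0$, the form $b$ is $\dbar_E$-closed. Unwinding the definition of $O^{q-1}_{n,i}$ (using $\bar D_t=\dbar_E+\varphi(t)\lrcorner\nabla+A(t)$ with $\varphi(t),A(t)$ vanishing at $t=0$), one reads off
$$O^{q-1}_{n,i}[j_0^{n-1}(\alpha_{n-1})(t)]=[t^i\, b]\in H^q((\pi_*\mathcal{D}^{\bullet})_0\otimes\mathcal{O}_{B,0}/m_0^{i+1})\qquad(0\le i\le n-1),$$
and the hypothesis $O^{q-1}_n[j_0^{n-1}(\alpha_{n-1})(t)]\neq 0$ says precisely that $[t^0 b]\neq 0$ in $H^q((\pi_*\mathcal{D}^{\bullet})_0\otimes\mathcal{O}_{B,0}/m_0)$. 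I would then let $n'$ be the \emph{largest} integer with $1\le n'\le n$ for which $[t^{n'-1}b]\neq 0$ in $H^q((\pi_*\mathcal{D}^{\bullet})_0\otimes\mathcal{O}_{B,0}/m_0^{n'})$; this is well defined because $n'=1$ always qualifies, and by construction $O^{q-1}_{n,n'-1}[j_0^{n-1}(\alpha_{n-1})(t)]=[t^{n'-1}b]\neq 0$.

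If $n'=n$ there is nothing more to do: take $\alpha_{n'-1}:=\alpha_{n-1}$, and the two obstruction classes in the statement become literally the same expression. So assume $n'<n$. Then $n'+1\le n$, so by maximality of $n'$ we must have $[t^{n'}b]=0$ in $H^q((\pi_*\mathcal{D}^{\bullet})_0\otimes\mathcal{O}_{B,0}/m_0^{n'+1})$; hence there is a local section $\alpha_{n'-1}\in\Gamma(U,\pi_*\mathcal{D}^{q-1})$ around $t=0$ with $j_0^{n'}(\bar D^{q-1}\alpha_{n'-1})(t)=t^{n'}b$. Dropping the top-degree term, $j_0^{n'-1}(\bar D^{q-1}\alpha_{n'-1})(t)=0$, so $\alpha_{n'-1}$ is an admissible input for $O^{q-1}_{n',n'-1}$, and the coefficient of $t^{n'}$ in $\bar D^{q-1}\alpha_{n'-1}$ is $b$ again. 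Therefore
$$O^{q-1}_{n',n'-1}[j_0^{n'-1}(\alpha_{n'-1})(t)]=[t^{n'-1}\cdot(j_0^{n'}(\bar D^{q-1}\alpha_{n'-1})(t)/t^{n'})]=[t^{n'-1}b]=O^{q-1}_{n,n'-1}[j_0^{n-1}(\alpha_{n-1})(t)]\neq 0,$$
which is the desired conclusion.

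The step I expect to be the crux is the last one: recognizing that the coboundary $\alpha_{n'-1}$ witnessing the vanishing of $[t^{n'}b]$ at level $n'+1$ is \emph{itself} an order-$(n'-1)$ extension whose own $n'$-th order obstruction cocycle is again $b$. This is exactly what carries the high-order obstruction at level $n$ down to a genuinely nonzero obstruction at the minimal level $n'$. Everything else is routine bookkeeping with Taylor jets: the coefficient of $t^m$ in $\bar D_t\gamma(t)$ equals $\dbar_E\gamma_m+\sum_{j=0}^{m-1}(\varphi_{m-j}\lrcorner\nabla+A_{m-j})\gamma_j$, from which the identifications above between truncated $\bar D$-jets and the classes $[t^i b]$ follow immediately.
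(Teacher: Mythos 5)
Your proof is correct and follows essentially the same descent as the paper's: the key step in both arguments is that a primitive witnessing the vanishing of $[t^{j}b]$ at level $j+1$ is itself a $(j-1)$-th order extension whose obstruction cocycle is again $b$, shifted one level down. You merely repackage the paper's iteration by identifying every intermediate obstruction with $[t^i b]$ and jumping directly to the maximal level $n'$ at which $[t^{n'-1}b]\neq 0$, which also makes the termination of the descent (left implicit in the paper's ``we finally arrive at some $n'$'') explicit.
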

\begin{proof}
If $O_{n,n-1}^{q-1}[j_0^{n-1}(\alpha_{n-1})(t)]\neq 0$, we can simply take $n'=n$ and $\alpha_{n'-1}=\alpha_{n-1}$. Otherwise, there exists $\alpha_1'$ such that
$$\bar{D}^{q-1}_{n-1}\alpha_1'=O_{n,n-1}^{q-1}[j_0^{n-1}(\alpha_{n-1})(t)].$$
Then we have
$$O_{n-1,n-2}^{q-1}[\alpha_1']=O_{n,n-2}^{q-1}[j_0^{n-1}(\alpha_{n-1})(t)].$$
Since $O_n^{q-1}[j_0^{n-1}(\alpha_{n-1})(t)]\neq 0$, we finally arrive at some $n'$ such that
$$O_{n,n'-1}^{q-1}[j_0^{n-1}(\alpha_{n-1})(t)]=O_{n',n'-1}^{q-1}[j_0^{n'-1}(\alpha_{n'-1})(t)]\neq 0.$$
\end{proof}

These two propositions together prove the following

\begin{corollary}\label{prop:nonexact}
Every local extension of every non-exact element $\beta\in\ker(\dbar_E^q)$ is non-exact if and only if $O_{n,n-1}^{q-1}\equiv 0$ for all $n\geq 1$.
\end{corollary}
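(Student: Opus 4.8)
The plan is to combine the two preceding propositions directly. Recall that the first of them characterizes when a non-exact $\beta\in\ker(\dbar_E^q)$ admits a local extension that becomes $\bar{D}^{q-1}$-exact away from the origin, in terms of $\beta$ lying in the image of some $O_n^{q-1}$; and the second shows that whenever $O_n^{q-1}[j_0^{n-1}(\alpha_{n-1})(t)]\neq 0$, one may descend to some $n'\le n$ at which the \emph{same order} obstruction $O_{n',n'-1}^{q-1}$ is nonzero (on an appropriately modified jet). The corollary is essentially the logical contrapositive repackaging of these two facts, so the proof should be short.

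First I would prove the ``if'' direction. Assume $O_{n,n-1}^{q-1}\equiv 0$ for all $n\ge 1$, and suppose toward a contradiction that some non-exact $\beta\in\ker(\dbar_E^q)$ has a local extension $\beta(t)$ that is $\bar{D}^{q-1}$-exact for $t\neq 0$. By the first proposition (converse direction), there exist $n\ge 1$ and a jet $[j_0^{n-1}(\alpha_{n-1})(t)]$ with $O_n^{q-1}[j_0^{n-1}(\alpha_{n-1})(t)]=[\beta]\ne 0$ in $H^q((\pi_*\mathcal{D}^{\bullet})_0\otimes k(0))$. In particular $O_n^{q-1}[j_0^{n-1}(\alpha_{n-1})(t)]\ne 0$, so the hypotheses of the second proposition are met; it yields $n'\le n$ and a jet $[j_0^{n'-1}(\alpha_{n'-1})(t)]$ with $O_{n',n'-1}^{q-1}[j_0^{n'-1}(\alpha_{n'-1})(t)]\ne 0$, contradicting $O_{n',n'-1}^{q-1}\equiv 0$. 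Hence every local extension of every non-exact $\beta$ is non-exact.

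For the ``only if'' direction, assume every local extension of every non-exact $\beta\in\ker(\dbar_E^q)$ is non-exact, and suppose $O_{n_0,n_0-1}^{q-1}\not\equiv 0$ for some $n_0$; pick a jet $[j_0^{n_0-1}(\alpha_{n_0-1})(t)]$ on which it is nonzero. Since $O_{n_0,n_0-1}^{q-1}=O_{n_0,0}^{q-1}$ composed with multiplication by $t^{\,n_0-1}$ is the leading part of $O_{n_0}^{q-1}$, nonvanishing of $O_{n_0,n_0-1}^{q-1}$ on this jet forces $O_{n_0}^{q-1}[j_0^{n_0-1}(\alpha_{n_0-1})(t)]\ne 0$ (here I would spell out the small compatibility observation, already used in the second proposition's proof, that if the top-order term survives then the image in $H^q((\pi_*\mathcal{D}^\bullet)_0\otimes k(0))$ is nonzero). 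Set $\beta$ to be a representative of this nonzero class; it is a non-exact element of $\ker(\dbar_E^q)$ with $O_{n_0}^{q-1}[j_0^{n_0-1}(\alpha_{n_0-1})(t)]=[\beta]$, so the first proposition produces a local extension of $\beta$ that is $\bar{D}^{q-1}$-exact for $t\neq 0$ — contradicting our assumption. Therefore $O_{n,n-1}^{q-1}\equiv 0$ for all $n\ge 1$.

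The only genuinely delicate point is the bookkeeping in the ``only if'' direction: translating ``$O_{n,n-1}^{q-1}$ is not identically zero'' into ``$O_n^{q-1}$ has nonzero image $[\beta]$ for a suitable jet,'' since $O_{n,n-1}^{q-1}$ lands in $H^{q+1}((\pi_*\mathcal{D}^\bullet)_0\otimes\mathcal{O}_{B,0}/m_0^n)$ while $O_n^{q-1}$ lands in $H^{q+1}((\pi_*\mathcal{D}^\bullet)_0\otimes k(0))$, and one must check these detect the same nonvanishing. This is exactly the factorization already recorded in the Remark following \eqref{eqn:obstruction} together with the reduction in the second proposition, so no new ideas are needed — just care that the chosen jet and order match up. Everything else is a formal chase through the two quoted propositions.
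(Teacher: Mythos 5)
Your argument is correct and follows essentially the same route as the paper: the ``if'' direction is the contrapositive chain through the two preceding propositions, and the ``only if'' direction reduces to the observation that $O_{n,n-1}^{q-1}[\,\cdot\,]$ is the image of $O_n^{q-1}[\,\cdot\,]$ under multiplication by $t^{n-1}$, so nonvanishing of the former forces nonvanishing of the latter and Proposition on exact extensions applies. In fact you make explicit a compatibility step that the paper's own proof glosses over (it writes $[\beta]\notin\mathrm{Im}(O_{n,n-1}^{q-1})$ where $\mathrm{Im}(O_n^{q-1})$ is meant), so no changes are needed.
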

\begin{proof}
For a fixed non-exact $\beta\in\ker(\dbar_E^q)$, if any extension of $\beta$ is non-exact, then $[\beta]\notin\text{Im}(O_{n,n-1}^{q-1})$ for all $n\geq 1$. Hence $O_{n,n-1}^{q-1}\equiv 0$.

Conversely, if there is an extension of $\beta$ such that it is exact for $t\neq 0$, then there exist $n\geq 1$ and $[j_0^{n-1}(\alpha_{n-1})(t)]\in H^{q-1}((\pi_*\mathcal{D}^{\bullet})_0\otimes\mathcal{O}_{B,0}/m_0^n)$ such that
$$O_n^{q-1}[j_0^{n-1}(\alpha_{n-1})(t)]=[\beta]\neq 0.$$
But we can also choose $n'\leq n$ and $[j_0^{n'-1}(\alpha_{n'-1})(t)]\in H^{q-1}((\pi_*\mathcal{D}^{\bullet})_0\otimes\mathcal{O}_{B,0}/m_0^{n'})$ such that
$$O_{n,n'-1}^{q-1}[j_0^{n-1}(\alpha_{n-1})(t)]=O_{n',n'-1}^{q-1}[j_0^{n'-1}(\alpha_{n'-1})(t)]\neq 0.$$
This proves the corollary.
\end{proof}

\begin{lemma}
For each $q\geq 0$, $\pi_*\mathcal{D}^q$ is a flat $\mathcal{O}_{B}$-module.
\end{lemma}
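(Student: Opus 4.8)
The plan is to verify flatness stalk by stalk. Since a sheaf of $\mathcal{O}_B$-modules is flat exactly when each of its stalks is flat over the corresponding local ring, I would reduce the statement to showing that $(\pi_*\mathcal{D}^q)_{t_0}$ is a flat $\mathcal{O}_{B,t_0}$-module for every $t_0\in B$. The point of having assumed $\dim_{\mathbb{C}}B=1$ is that the local ring $\mathcal{O}_{B,t_0}$ is isomorphic to $\mathbb{C}\{s\}$ (convergent power series in $s=t-t_0$), which is a discrete valuation ring, in particular a PID; and over a PID a module is flat if and only if it is torsion free. So the whole lemma comes down to checking that the stalk $(\pi_*\mathcal{D}^q)_{t_0}$ has no $\mathcal{O}_{B,t_0}$-torsion.

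For this I would simply unwind what a stalk element is. Because $\pi$ is proper with $\pi^{-1}(W)=X\times W$, an element of $(\pi_*\mathcal{D}^q)_{t_0}$ is the germ at $t_0$ of a section $\alpha\in\mathcal{D}^q(X\times W)$ over a small disk $W\ni t_0$, i.e. a family $\{\alpha(t)\}_{t\in W}$ of $E$-valued $(0,q)$-forms on $X$ whose coefficients (in holomorphic-in-$t$ frames) depend holomorphically on $t$ --- this is precisely the content of $\dbar_{\cu{E},B}\alpha=0$. Now suppose $f\in\mathcal{O}_{B,t_0}\setminus\{0\}$ annihilates the germ of $\alpha$. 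Then, after shrinking $W$, we have $f(t)\alpha(x,t)=0$ for all $(x,t)\in X\times W$, and we may shrink $W$ further so that $t_0$ is the only zero of $f$ in $W$. For every $t\in W\setminus\{t_0\}$ we then have $f(t)\neq 0$, hence $\alpha(x,t)=0$ for all $x\in X$; and since $t\mapsto\alpha(x,t)$ is continuous (indeed holomorphic), letting $t\to t_0$ forces $\alpha(x,t_0)=0$ as well. Thus $\alpha\equiv 0$ near $X\times\{t_0\}$, so its germ is zero, which is the desired torsion-freeness.

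I do not expect a genuine obstacle here: the only thing to be careful about is the observation that sections of $\mathcal{D}^q$ are honest families of differential forms (elements of $\Gamma_{smooth}$), so the naive fiberwise vanishing argument really does apply --- this is exactly where the concrete, if infinite-rank, nature of $\mathcal{D}^\bullet$ is convenient. If one prefers not to invoke ``PID $\Rightarrow$ flat $=$ torsion free'' as a black box, one can instead use the local criterion for flatness over finitely generated ideals: the only nonzero finitely generated ideals of $\mathbb{C}\{s\}$ are the principal ideals $(s^N)$, each free of rank one, so the natural map $(s^N)\otimes_{\mathcal{O}_{B,t_0}}(\pi_*\mathcal{D}^q)_{t_0}\to(\pi_*\mathcal{D}^q)_{t_0}$ is injective precisely because multiplication by $s^N$ is injective on the stalk by the argument just given.
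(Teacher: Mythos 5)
Your proof is correct and follows essentially the same route as the paper's: reduce to stalks, observe that $\mathcal{O}_{B,t_0}\cong\mathbb{C}\{s\}$ is a PID so that flatness is equivalent to torsion-freeness, and check the latter. The only difference is that the paper asserts torsion-freeness of $(\pi_*\mathcal{D}^q)_{t}$ without comment, whereas you supply the (correct) fiberwise vanishing-plus-continuity argument for it.
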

\begin{proof}
This follows from the fact that $(\pi_*\mathcal{D}^q)_t$ is torsion free and $\mathcal{O}_{B,t}\cong\mathbb{C}\{x-t\}$ is a PID for every $t\in B$.
\end{proof}

We will need the following fact from homological algebra, whose proof can be found, e.g. in \cite{Hartshorne_AG}.
\begin{proposition}\label{prop:hom_alg}
Let $A$ be a Noetherian ring and $C^{\bullet}$ be a finite cochain complex of flat $A$-modules whose cohomology $H^i(C^{\bullet})$ is finitely generated for all $i$. Then there exists a cochain complex of finitely generated flat $A$-modules $K^{\bullet}$ and a cochain map $C^{\bullet}\rightarrow K^{\bullet}$, which is a quasi-isomorphism. Moreover, for any $A$-module $M$, the natural map $C^{\bullet}\otimes M\rightarrow K^{\bullet}\otimes M$ is a quasi-isomorphism. Furthermore, if the dimension
$$\dim_{k(\mathfrak{p})}H^q(K^{\bullet}\otimes k(\mathfrak{p}))$$
is locally constant in $\mathfrak{p}\in\text{Spec}(A)$, then for $i=q,q-1$, the $\delta$-functors $T^i(M):=H^i(K^{\bullet}\otimes M)$ commute with base change.
\end{proposition}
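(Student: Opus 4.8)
The plan is to follow the classical ``Grothendieck complex'' argument underlying cohomology and base change (see \cite{Hartshorne_AG}, III.12), adapting it to the fact that in our application the terms of $C^\bullet$ are infinite-dimensional, so that only boundedness, flatness, and finite generation of the cohomology are available.

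First I would construct $K^\bullet$. Say $C^i=0$ outside degrees $[a,b]$. Using that $A$ is Noetherian and each $H^i(C^\bullet)$ is finitely generated, one builds, by descending induction on the degree starting from $b$, a complex $L^\bullet$ of finitely generated free $A$-modules with $L^i=0$ for $i>b$ together with a quasi-isomorphism $L^\bullet\to C^\bullet$; at each stage one needs only finitely many generators of syzygy and cohomology modules, which are finitely generated by Noetherianity. Since $H^i(C^\bullet)=0$ for $i<a$, the complex $L^\bullet$ is exact in degrees $<a$; so fixing some $m<a$ and setting $K^i:=L^i$ for $i>m$, $K^m:=\mathrm{coker}(L^{m-1}\to L^m)$, and $K^i:=0$ for $i<m$, the natural surjection $L^\bullet\twoheadrightarrow K^\bullet$ is a quasi-isomorphism (its kernel $\cdots\to L^{m-1}\to\mathrm{im}\,d^{m-1}\to 0$ is acyclic), $K^\bullet$ is bounded, and $K^i$ is finitely generated free for $i>m$. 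The remaining point---and I expect this to be \emph{the main obstacle}, since it is exactly where infinite rank forces a departure from Hartshorne's finite-rank argument---is to show that the one surviving truncated term $K^m$ is flat. For this I would use that $C^\bullet$ is a bounded complex of flat modules and that $C^\bullet\simeq L^\bullet\simeq K^\bullet$ in the derived category: replacing $K^m$ by a free resolution and comparing the derived tensor product $K^\bullet\otimes^{\mathbf L}N$ computed in two ways, one gets $\mathrm{Tor}^A_j(K^m,N)\cong H^{m-j}(C^\bullet\otimes_A N)=0$ for all $j\geq 1$ and all $A$-modules $N$ (the vanishing because $m-j<a$), whence $K^m$ is flat and, being finitely generated over a Noetherian ring, finite projective. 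This establishes the first assertion.

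For part (2), observe that the mapping cones of $L^\bullet\to C^\bullet$ and of $L^\bullet\to K^\bullet$ are acyclic \emph{bounded} complexes of \emph{flat} modules. Any such complex splits, by descending induction, into short exact sequences $0\to Z^i\to P^i\to Z^{i+1}\to 0$ with all three terms flat (the syzygies $Z^i$ of a bounded acyclic complex of flats are themselves flat), so it remains acyclic after applying $-\otimes_A M$ for every $M$. Hence $L^\bullet\otimes M\to C^\bullet\otimes M$ and $L^\bullet\otimes M\to K^\bullet\otimes M$ are quasi-isomorphisms, and therefore so is the comparison map $C^\bullet\otimes M\to K^\bullet\otimes M$.

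For part (3), since $K^\bullet$ is a bounded complex of finite projective modules I may work Zariski-locally on $\mathrm{Spec}\,A$ and assume the $K^i$ free of finite rank. Flatness of the $K^i$ makes $M\mapsto T^i(M):=H^i(K^\bullet\otimes M)$ a $\delta$-functor (a short exact sequence of modules gives a short exact sequence of complexes, hence a long exact cohomology sequence). The plan is then to invoke the Grothendieck exchange lemma (\cite{Hartshorne_AG}, III.12.5 and III.12.8--12.11; cf.\ also Mumford's book on abelian varieties, \S II.5): (i) if $T^i$ is right exact then $T^i(A)\otimes M\to T^i(M)$ is an isomorphism for all $M$; and (ii) the function $\mathfrak p\mapsto\dim_{k(\mathfrak p)}T^q(k(\mathfrak p))$ is upper semicontinuous, so its assumed local constancy forces, via Grauert's criterion (\cite{Hartshorne_AG}, III.12.9), that $T^q(A)$ is locally free, that $T^q(A)\otimes k(\mathfrak p)\xrightarrow{\sim}T^q(k(\mathfrak p))$ for all $\mathfrak p$, and that $T^q$ is right exact; then---using that $T^q(A)$ is locally free---the exchange lemma gives that $T^{q-1}$ is right exact as well. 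Applying (i) to $i=q$ and $i=q-1$ yields that both $\delta$-functors commute with base change. Apart from the flatness of $K^m$ flagged above, everything in part (3) is the standard cohomology-and-base-change formalism, so no further serious difficulty is expected.
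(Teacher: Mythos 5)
Your proposal is correct, and it is essentially the proof the paper itself defers to: the paper offers no argument for this proposition beyond the citation of \cite{Hartshorne_AG} (III.12), and what you have written out is exactly that standard Grothendieck-complex and cohomology-and-base-change argument (cf.\ also Mumford, \emph{Abelian Varieties}, II.5), including the one step that genuinely uses flatness of the $C^i$, namely the $\mathrm{Tor}$-vanishing argument showing that the truncated bottom term $K^m$ is flat, hence finite projective.

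Two small caveats, neither of which puts you at a disadvantage relative to the paper. First, your construction produces a zig-zag of quasi-isomorphisms $C^\bullet\leftarrow L^\bullet\rightarrow K^\bullet$ rather than a literal cochain map $C^\bullet\to K^\bullet$; this matches Hartshorne's formulation and yields the natural isomorphism $H^i(C^\bullet\otimes M)\cong H^i(K^\bullet\otimes M)$, which is all that is used downstream, but it is not the literal statement. Second, the final assertion requires $A$ to be reduced: Grauert's criterion (Hartshorne III.12.9), which you invoke to pass from local constancy of $\dim_{k(\mathfrak{p})}T^q(k(\mathfrak{p}))$ to local freeness of $T^q(A)$, fails over non-reduced rings. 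For instance, take $A=k[\epsilon]/(\epsilon^2)$ and $K^\bullet=(A\xrightarrow{\epsilon}A)$ in degrees $0,1$: the fibre dimension of $T^1$ is (trivially) constant, yet $T^0(A)\otimes k\to T^0(k)$ is the zero map between one-dimensional spaces, so $T^{q-1}$ does not commute with base change. This hypothesis is also missing from the paper's statement of the proposition, and it is harmless in the application, where $A=\mathcal{O}_{B,0}\cong\mathbb{C}\{t\}$ is a domain; but since your proof relies on it silently, it should be recorded.
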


We apply this proposition to the case $A=\mathcal{O}_{B,0}$, $C^{\bullet}=(\pi_*\mathcal{D}^{\bullet})_0$ to prove the following:

\begin{proposition}\label{prop:tensor_commutes_cohomology}
If $\dim_{k(t)}H^q((\pi_*\mathcal{D}^{\bullet})_t\otimes k(t))$ is locally constant around $0\in B$, then the canonical map
$$H^q((\pi_*\mathcal{D}^{\bullet})_0)\otimes k(0)\rightarrow H^q((\pi_*\mathcal{D}^{\bullet})_0\otimes k(0))$$
is an isomorphism.
\end{proposition}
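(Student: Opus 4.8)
The plan is to feed $A=\mathcal{O}_{B,0}$ and the finite complex $C^{\bullet}=(\pi_*\mathcal{D}^{\bullet})_0$ into Proposition~\ref{prop:hom_alg}, and then read the assertion off the finite \emph{free} complex $K^{\bullet}$ that it produces. First I would check that its hypotheses are met: the ring $A=\mathbb{C}\{t\}$ is Noetherian; $C^{\bullet}$ is a finite complex since $\mathcal{D}^q=0$ for $q>\dim_{\mathbb{C}}X$; each term $(\pi_*\mathcal{D}^q)_0$ is flat over $A$ by the lemma just proved; and, since localization is exact, $H^i(C^{\bullet})\cong(\mathcal{H}^i(\pi_*\mathcal{D}^{\bullet}))_0\cong(R^i\pi_*\mathcal{E})_0$ by Proposition~\ref{lem:acyclic_resolution}, which is finitely generated over $\mathcal{O}_{B,0}$ by Grauert's coherence theorem. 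Proposition~\ref{prop:hom_alg} then supplies a finite complex $K^{\bullet}$ of finitely generated flat (hence free, $A$ being local Noetherian) $A$-modules and a quasi-isomorphism $\phi\colon C^{\bullet}\to K^{\bullet}$ with the property that $\phi\otimes_A M$ is a quasi-isomorphism for every $A$-module $M$.

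The key step is then to verify the remaining hypothesis of Proposition~\ref{prop:hom_alg}: that $\mathfrak{p}\mapsto\dim_{k(\mathfrak{p})}H^q(K^{\bullet}\otimes k(\mathfrak{p}))$ is locally constant on $\operatorname{Spec}(A)$. Since $A$ is a discrete valuation ring, $\operatorname{Spec}(A)$ is irreducible and consists only of the closed point $m_0$ and the generic point $(0)$, so this reduces to showing the two values agree. At $m_0$, applying $\phi\otimes k(0)$ and Proposition~\ref{lem:unnatural_isom} gives
$$\dim_{\mathbb{C}}H^q(K^{\bullet}\otimes k(0))=\dim_{\mathbb{C}}H^q(C^{\bullet}\otimes k(0))=\dim_{\mathbb{C}}H^q\big((\pi_*\mathcal{D}^{\bullet})_0\otimes k(0)\big)=\dim_{\mathbb{C}}H^q(X,E)=:d.$$
At $(0)$, writing $F=\operatorname{Frac}(A)$ and using flatness of $F$ over $A$, $\dim_F H^q(K^{\bullet}\otimes_A F)=\operatorname{rank}_A H^q(K^{\bullet})=\operatorname{rank}_A H^q(C^{\bullet})=\operatorname{rank}_{\mathcal{O}_{B,0}}(R^q\pi_*\mathcal{E})_0$. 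This last number is where I would bring in the standing hypothesis together with Grauert's Theorem~\ref{prop:locally_freeness}: local constancy of $\dim_{\mathbb{C}}H^q(\mathcal{X}_t,\mathcal{E}_t)$ near $0$ forces $R^q\pi_*\mathcal{E}$ to be locally free near $0$, with rank equal to the dimension of its fibre at $0$, which is $\dim_{\mathbb{C}}H^q(X,E)=d$ by the base-change part of that theorem. Hence both values equal $d$ and the hypothesis holds. I expect this identification of the generic-point value with $d$ to be the main obstacle: Proposition~\ref{prop:hom_alg} is applied at the single germ $0\in B$ and cannot by itself see the nearby fibres $\mathcal{X}_t$, so one genuinely has to feed in Grauert's local-freeness criterion — or, alternatively, spread $K^{\bullet}$ out over a small disk around $0$ and compare its fibrewise cohomology with $H^q(\mathcal{X}_t,\mathcal{E}_t)$ through Proposition~\ref{lem:unnatural_isom}, using upper semicontinuity to pin down the generic value.

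Finally I would conclude. By Proposition~\ref{prop:hom_alg}, the $\delta$-functor $T^i(M):=H^i(K^{\bullet}\otimes M)$ commutes with base change for $i=q,q-1$; in particular the natural map $H^q(K^{\bullet})\otimes_A k(0)\to H^q(K^{\bullet}\otimes_A k(0))$ is an isomorphism. To transport this back to $C^{\bullet}$, consider the commuting square
$$
\begin{array}{ccc}
H^q(C^{\bullet})\otimes k(0) & \longrightarrow & H^q(C^{\bullet}\otimes k(0))\\
\downarrow & & \downarrow\\
H^q(K^{\bullet})\otimes k(0) & \longrightarrow & H^q(K^{\bullet}\otimes k(0))
\end{array}
$$
in which the left vertical arrow is $H^q(\phi)\otimes k(0)$, an isomorphism because $\phi$ is a quasi-isomorphism; the right vertical arrow is $H^q(\phi\otimes k(0))$, an isomorphism because $\phi\otimes k(0)$ is a quasi-isomorphism; and the bottom arrow is the isomorphism just obtained. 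Therefore the top arrow is an isomorphism, and since $C^{\bullet}=(\pi_*\mathcal{D}^{\bullet})_0$ this is precisely the assertion that $H^q((\pi_*\mathcal{D}^{\bullet})_0)\otimes k(0)\to H^q((\pi_*\mathcal{D}^{\bullet})_0\otimes k(0))$ is an isomorphism.
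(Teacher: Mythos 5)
Your proposal is correct and follows essentially the same route as the paper: apply Proposition~\ref{prop:hom_alg} to $A=\mathcal{O}_{B,0}$ and $C^{\bullet}=(\pi_*\mathcal{D}^{\bullet})_0$, verify constancy of $\dim_{k(\mathfrak{p})}H^q(K^{\bullet}\otimes k(\mathfrak{p}))$ over the two points of $\operatorname{Spec}(\mathbb{C}\{t\})$ by feeding in Grauert's criterion (Theorem~\ref{prop:locally_freeness}) to identify the generic rank with $\dim_{\mathbb{C}}H^q(X,E)$, and conclude via base change at $M=k(0)$. Your explicit commuting square at the end is just a spelled-out version of the paper's final step.
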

\begin{proof}
Since $(\pi_*\mathcal{D}^{\bullet})_0$ is a flat $\mathcal{O}_{B,0}$-module, using Proposition \ref{prop:hom_alg}, we obtain a complex of finitely generated flat $\mathcal{O}_{B,0}$-modules $K^{\bullet}$ such that
$$H^{\bullet}((\pi_*\mathcal{D}^{\bullet})_0\otimes M)\cong H^{\bullet}(K^{\bullet}\otimes M)$$
for any $\mathcal{O}_{B,0}$-module $M$. We claim that the dimension
$$\dim_{k(\mathfrak{p})}H^q(K^{\bullet}\otimes k(\mathfrak{p}))$$
is locally constant in $\mathfrak{p}\in\text{Spec}(\mathcal{O}_{B,0})$.

First of all, since $\dim_{k(t)}H^q((\pi_*\mathcal{D}^{\bullet})_t\otimes k(t))$ is locally constant, by Theorem \ref{prop:locally_freeness} and Proposition \ref{lem:acyclic_resolution}, the sheaf $\mathcal{H}^q(\pi_*\mathcal{D}^{\bullet})\cong R^q\pi_*\mathcal{E}$ is a locally free $\mathcal{O}_B$-module. Hence
$$H^q((\pi_*\mathcal{D}^{\bullet})_0)\otimes k(0)\cong (R^q\pi_*\mathcal{E})_0\otimes k(0)\cong H^q(X,E)\cong H^q((\pi_*\mathcal{D}^{\bullet})_0\otimes k(0)).$$
In particular
\begin{align*}
\dim_{k(0)}H^q(K^{\bullet}\otimes k(0))
& = \dim_{k(0)}H^q((\pi_*\mathcal{D}^{\bullet})_0\otimes k(0))\\
& = \dim_{k(0)}H^q((\pi_*\mathcal{D}^{\bullet})_0)\otimes k(0)\\
& = \dim_{k(0)}H^q(K^{\bullet})\otimes k(0).
\end{align*}
Note that $\text{Spec}(\mathcal{O}_{B,0})=\text{Spec}(\mathbb{C}\{x\})=\{(0),(x)\}$. Let $Q:=(\mathcal{O}_{B,0})_{(0)}$ be the localization of $\mathcal{O}_{B,0}$ at the ideal $(0)$, which is the field of quotients of $\mathcal{O}_{B,0}$. We obtain
$$H^q(K^{\bullet}\otimes k((0)))=H^q(K^{\bullet}\otimes Q)\cong H^q(K^{\bullet})\otimes Q,$$
since localization is flat. On the other hand, as $(\mathcal{O}_{B,0})_{(x)}\cong\mathcal{O}_{B,0}$, we have
$$H^q(K^{\bullet}\otimes k((x)))\cong H^q(K^{\bullet}\otimes\mathcal{O}_{B,0}/m_0)=H^q(K^{\bullet}\otimes k(0)),$$
and so
$$\dim_{k((x))}H^q(K^{\bullet}\otimes k((x)))=\dim_{k(0)}H^q(K^{\bullet}\otimes k(0))=\dim_{k(0)}H^q(K^{\bullet})\otimes k(0).$$
As $H^q(K^{\bullet})\cong H^q((\pi_*\mathcal{D}^{\bullet})_0)$ is a free $\mathcal{O}_{B,0}$-module and $\mathcal{O}_{B,0}$ is a local integral domain, we have
$$\dim_QH^q(K^{\bullet})\otimes Q=\dim_{k(0)}H^q(K^{\bullet})\otimes k(0).$$
In summary, we conclude that
$$\dim_QH^q(K^{\bullet}\otimes Q)=\dim_{k((x))}H^q(K^{\bullet}\otimes k((x))),$$
which means that $\dim_{k(\mathfrak{p})}H^q(K^{\bullet}\otimes k(\mathfrak{p}))$ is constant in $\mathfrak{p}\in\text{Spec}(\mathcal{O}_{B,0})$. Hence $T^q$ commutes with base change. The required isomorphism now follows from taking $M=k(0)$ in Proposition~\ref{prop:hom_alg}.
\end{proof}

\begin{remark}
By replacing $0\in B$ by nearby $t\in B$, we note that the isomorphism holds in a neighborhood of $0$.
\end{remark}

We are now ready to prove our main result.
\begin{theorem}\label{thm:main_2}
$\dim_{k(t)}H^q(\mathcal{X}_t,\mathcal{E}_t)$ is locally constant if and only if $O_{m,m-1}^q\equiv 0$ and $O^{q-1}_{n,n-1}\equiv 0$ for all $m,n\geq 1$.
\end{theorem}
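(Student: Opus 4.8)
The plan is to reduce the statement to linear algebra over the discrete valuation ring $\mathcal{O}_{B,0}$ and to recognize the two obstruction families as governing the two separate freeness conditions that appear in Grauert's criterion (Theorem~\ref{prop:locally_freeness}).

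First I would apply Proposition~\ref{prop:hom_alg} with $A=\mathcal{O}_{B,0}$ and $C^{\bullet}=(\pi_*\mathcal{D}^{\bullet})_0$: this is a finite complex of flat $A$-modules (flatness of $\pi_*\mathcal{D}^q$) whose cohomology $H^i(C^{\bullet})\cong(R^i\pi_*\mathcal{E})_0$ is finitely generated over the Noetherian ring $A$ (coherence of the direct images, together with Proposition~\ref{lem:acyclic_resolution}). Thus there is a finite complex $K^{\bullet}$ of \emph{free} $A$-modules, quasi-isomorphic to $C^{\bullet}$ compatibly with $-\otimes_A M$ for every $M$; in particular $H^i(K^{\bullet})\cong(R^i\pi_*\mathcal{E})_0$ and, by Propositions~\ref{lem:acyclic_resolution} and~\ref{lem:unnatural_isom}, $H^i(K^{\bullet}\otimes_A k(0))\cong H^i((\pi_*\mathcal{D}^{\bullet})_0\otimes k(0))\cong H^i(X,E)$. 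Since $A$ is a DVR, the universal-coefficient exact sequence for the free complex $K^{\bullet}$ gives, for every $i$,
$$0\longrightarrow H^i(K^{\bullet})\otimes_A k(0)\xrightarrow{\ \phi^i\ }H^i(K^{\bullet}\otimes_A k(0))\longrightarrow\operatorname{Tor}_1^A\bigl(H^{i+1}(K^{\bullet}),k(0)\bigr)\longrightarrow 0,$$
where $\phi^i$ is precisely the base-change map $(R^i\pi_*\mathcal{E})_0\otimes k(0)\to H^i(X,E)$. Hence $\phi^i$ is always injective, and it is surjective if and only if $H^{i+1}(K^{\bullet})\cong(R^{i+1}\pi_*\mathcal{E})_0$ is torsion-free, equivalently free, equivalently $R^{i+1}\pi_*\mathcal{E}$ is locally free near $0$; running the same argument over $\mathcal{O}_{B,t}$ shows that $\phi^i(t)$ is an isomorphism for all $t$ near $0$ if and only if $R^{i+1}\pi_*\mathcal{E}$ is locally free near $0$.

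Next I would translate the obstruction conditions. Iterating Proposition~\ref{prop:extension} (read in degree $i$) over all orders $n\ge 1$ produces, for every class of $H^i(X,E)$, a formal $\bar{D}_t$-closed extension; Appendix~\ref{sec:covergence} upgrades this to an analytic extension on a common disk around $0$, with finite-dimensionality of $H^i(X,E)$ providing the uniform radius. An analytic $\bar{D}_t$-closed extension of $[\beta]$ is nothing but an element of $(R^i\pi_*\mathcal{E})_0$ mapping to $[\beta]$ under $\phi^i$, and conversely surjectivity of $\phi^i$ produces such an extension for every class; Corollary~\ref{prop:nonexact} is the conceptual bridge, identifying ``a non-exact class whose extension becomes exact'' with ``torsion in $R^i\pi_*\mathcal{E}$''. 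The conclusion is that $O^i_{n,n-1}\equiv 0$ for all $n\ge 1$ if and only if every class of $H^i(X,E)$ extends analytically near $0$, if and only if $\phi^i$ is surjective. I would record this for $i=q$ and $i=q-1$.

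Finally I would assemble the two halves. By Grauert's Theorem~\ref{prop:locally_freeness} applied to $\mathcal{E}$ (via Proposition~\ref{lem:acyclic_resolution}), the function $t\mapsto\dim_{\mathbb{C}}H^q(\mathcal{X}_t,\mathcal{E}_t)$ is locally constant near $0$ if and only if $R^q\pi_*\mathcal{E}$ is locally free near $0$ and $\phi^q(t)$ is an isomorphism for $t$ near $0$. By the first step the former is equivalent to surjectivity of $\phi^{q-1}$, and the latter is equivalent to $R^{q+1}\pi_*\mathcal{E}$ being locally free near $0$, hence to surjectivity of $\phi^q$. Combining with the second step, local constancy near $0$ holds if and only if $\phi^q$ and $\phi^{q-1}$ are both surjective, if and only if $O^q_{m,m-1}\equiv 0$ and $O^{q-1}_{n,n-1}\equiv 0$ for all $m,n\ge 1$. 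I expect the main obstacle to be the second step: matching the infinitely many order-by-order conditions ``$O^i_{n,n-1}\equiv 0$'' with the single statement ``$\phi^i$ surjective'', which requires patching the liftings of Proposition~\ref{prop:extension} into one formal extension, the formal-to-analytic passage of the Appendix, and the torsion interpretation supplied by Corollary~\ref{prop:nonexact}.
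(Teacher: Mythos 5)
Your argument is correct, but it is organized quite differently from the proof in the paper. You push everything through Grauert's criterion (Theorem~\ref{prop:locally_freeness}) together with the universal-coefficient sequence for the free complex $K^{\bullet}$ over the discrete valuation ring $\mathcal{O}_{B,0}$, so that the two obstruction families are identified, via surjectivity of $\phi^{q-1}$ and $\phi^{q}$, with freeness of $(R^{q}\pi_*\mathcal{E})_0$ and $(R^{q+1}\pi_*\mathcal{E})_0$ respectively; this makes the ``why exactly two obstructions'' structure completely transparent, and your observation that $\phi^i$ is automatically injective with cokernel $\operatorname{Tor}_1^A(H^{i+1}(K^{\bullet}),k(0))$ is a genuine refinement of Proposition~\ref{prop:tensor_commutes_cohomology}, which only treats the locally constant case. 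The paper instead uses Proposition~\ref{prop:tensor_commutes_cohomology} and local freeness of $\mathcal{H}^q(\pi_*\mathcal{D}^{\bullet})$ only in the forward direction (extending classes holomorphically to kill $O^q_{m,m-1}$, and arguing that a nonzero section of a locally free sheaf cannot vanish on a punctured neighborhood to kill $O^{q-1}_{n,n-1}$), while its converse avoids Grauert's harder implication entirely: vanishing of both obstructions yields, for each $t$ small, an injection $H^q(X,E)\hookrightarrow H^q(\mathcal{X}_t,\mathcal{E}_t)$, which combined with upper semi-continuity forces constancy. The two routes rely on the same supporting results (Proposition~\ref{prop:extension}, Proposition~\ref{prop:hom_alg}, and the formal-to-analytic convergence of Appendix~\ref{sec:covergence}); the points in your write-up that deserve explicit justification are the compatibility of your $\phi^i$ with the abstract base-change map in Grauert's theorem under the identifications of Propositions~\ref{lem:unnatural_isom} and~\ref{lem:acyclic_resolution}, and the fact that freeness of the stalk at $0$ of a coherent sheaf on the disk implies local freeness near $0$ (the torsion support being a closed analytic set missing $0$), both of which are true and implicitly used by the paper as well.
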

\begin{proof}
If $\dim_{k(t)}H^q(\mathcal{X}_t,\mathcal{E}_t)=\dim_{k(t)}H^q((\pi_*\mathcal{D}^{\bullet})_t\otimes k(t))$ is locally constant, then Proposition \ref{prop:tensor_commutes_cohomology} shows that the natural map
$$H^q((\pi_*\mathcal{D}^{\bullet})_0)\otimes k(0)\rightarrow H^q((\pi_*\mathcal{D}^{\bullet})_0\otimes k(0))$$
is an isomorphism.

Now, let $c\in H^q((\pi_*\mathcal{D}^{\bullet})_0\otimes k(0))$. We can extend it to a nonzero local holomorphic section of $\mathcal{H}^q(\pi_*\mathcal{D}^{\bullet})$ since $\mathcal{H}^q(\pi_*\mathcal{D}^{\bullet})$ is locally free. Denote this extension by $\widetilde{c}$. Consider the germ of this section $\widetilde{c}_0\in H^q((\pi_*\mathcal{D}^{\bullet})_0)$. Choose a representative $\widetilde{\alpha}_0\in(\pi_*\mathcal{D}^{\bullet})_0$ in this cohomology class. For each $m\geq 1$, $\widetilde{\alpha}_0$ is mapped to $(\pi_*\mathcal{D}^{\bullet})_0\otimes\mathcal{O}_{B,0}/m_0^{m+1}$ via the quotient map $p_{m}$. Then the class $[p_{m}(\widetilde{\alpha}_0)]\in H^q((\pi_*\mathcal{D}^{\bullet})_0\otimes\mathcal{O}_{B,0}/m_0^{m+1})$ is an $m$-th order extension of $c$. Hence $O_{m,m-1}^q\equiv 0$ by Proposition \ref{prop:extension}. Since $m$ is arbitrary, $O^q_{m,m-1}\equiv 0$ for all $m\geq 1$.

For the obstruction map $O_{n,n-1}^{q-1}$, if $O_{n,n-1}^{q-1}[j_0^{n-1}(\alpha_{n-1})(t)]\neq 0$ for some $n\geq 1$ and $[j_0^{n-1}(\alpha_{n-1})(t)]\in H^q((\pi_*\mathcal{D}^{\bullet})_0\otimes\mathcal{O}_{B,0}/m_0^n)$, then we can find some nonzero $[\beta]\in H^q(\pi_*\mathcal{D}^{\bullet}_0\otimes k(0))$ and a local holomorphic extension $\widetilde{\beta}$ of $\beta$ such that it is exact only when $t\neq 0$. But since $\mathcal{H}^q(\pi_*\mathcal{D}^{\bullet})$ is locally free, any extension is locally nonzero by continuity. Therefore, $O_{n,n-1}^{q-1}\equiv 0$ for all $n\geq 1$ by Proposition \ref{prop:nonexact}.

Conversely, if both obstruction maps vanish, then for each $[\alpha]\in H^q((\pi_*\mathcal{D}^{\bullet})_0\otimes k(0))$, we obtain $\alpha(t)\in\Gamma(U,\pi_*\mathcal{D}^q)$ such that $\bar{D}_t\alpha(t)=0$ in some neighborhood $U\subset B$ containing $0$ and $[\alpha(0)]=[\alpha]$. Moreover, $\alpha(t)$ is non-exact since $O_{n,n-1}^{q-1}\equiv 0$ for all $n\geq 1$. Hence for fixed $t\in U$ we obtain an injective linear map
$$H^q((\pi_*\mathcal{D}^{\bullet})_0\otimes k(0))\rightarrow H^q((\pi_*\mathcal{D}^{\bullet})_t\otimes k(t)),\quad [\alpha]\mapsto[\alpha(t)].$$
Therefore,
$$\dim_{k(0)}H^q((\pi_*\mathcal{D}^{\bullet})_0\otimes k(0))\leq\dim_{k(t)}H^q((\pi_*\mathcal{D}^{\bullet})_t\otimes k(t)).$$
By upper semi-continuity, $\dim_{k(t)}H^q((\pi_*\mathcal{D}^{\bullet})_t\otimes k(t))=\dim_{k(t)}H^q(\mathcal{X}_t,\mathcal{E}_t)$ is locally constant.
\end{proof}

Recall that by choosing a Hermitian metric on $E$ and using the associated Chern connection, we can write
$$\bar{D}_t=\dbar+\varphi(t)\lrcorner\nabla+A(t),$$
where $\{(A(t),\varphi(t))\}_{t\in B}$ is the family of Maurer-Cartan elements which controls the deformations of $(X,E)$. Hence the $n$-th order obstruction maps $O^{i}_{n,n-1}:H^i((\pi_*\mathcal{D}^{\bullet})_0\otimes\mathcal{O}_{B,0}/m_0^n)\rightarrow H^{i+1}((\pi_*\mathcal{D}^{\bullet})_0\otimes\mathcal{O}_{B,0}/m_0^n)$, for $i=q,q-1$, defined in \eqref{eqn:obstruction} can be rewritten as
\begin{equation}\label{eqn:obstruction2}
O^{i}_{n,n-1}\left([\alpha_{n-1}]\right)=\left[t^{n-1}\sum_{j=0}^{n-1}(\varphi^{n-j}\lrcorner\nabla + A^{n-j})\alpha^j_{n-1}\right].
\end{equation}
as claimed in Theorem \ref{thm:main}.

\begin{example}
We first consider the case when $E=T_X$, the holomorphic tangent bundle of $X$. We deform the pair $(X,T_X)$ to $(\cu{X}_t,T_{\cu{X}_t})$, where $T_{\cu{X}_t}$ is the holomorphic tangent bundle to $\cu{X}_t$ (note that $T_X$ may have other deformations which are not isomorphic to the holomorphic tangent bundle on $\cu{X}_t$). In this case, the $\text{End}(T_X)$-part of the Maurer-Cartan element $(A(t),\varphi(t))$ is given by
$$A(t)=-T(\varphi(t),\bullet)-\nabla_{\bullet}\varphi(t),$$
where $T:\Omega^{0,\bullet}(T_X)\times\Omega^{0,\bullet}(T_X)\rightarrow\Omega^{0,\bullet}(T_X)$ is the graded torsion on $T_X$ defined by
$$T(\varphi,\psi):=\varphi\lrcorner\nabla\psi-(-1)^{|\varphi||\psi|}\psi\lrcorner\nabla\varphi-[\varphi,\psi].$$
So we have
$$\bar{D}_t^{\bullet}=\dbar_{T_X}^{\bullet}+[\varphi(t),-].$$

For $\alpha_{n-1}\in\Omega^{0,q}(T_X)\otimes\mathcal{O}_{B,0}$ such that $\bar{D}_t^q\alpha_{n-1}=0$ mod $t^{n-1}$, we have
$$t^{n-1}(j^n_0(\bar{D}^q_t\alpha_{n-1})/t^n)=t^{n-1}\left(\dbar_{T_X}\alpha^n_{n-1}+\sum_{j=0}^{n-1}[\varphi^{n-j},\alpha_{n-1}^j]\right).$$
As a class in $H^{q+1}(\Omega^{0,\bullet}(T_X)\otimes\mathcal{O}_{B,0}/m_0^n,\bar{D}_{n-1}^{\bullet})$, it is equal to
$$\left[t^{n-1}\sum_{j=0}^{n-1}[\varphi^{n-j},\alpha_{n-1}^j]\right].$$
Hence the obstruction is given by
$$O^q_{n,n-1}[j_0^{n-1}(\alpha_{n-1})(t)]=\left[t^{n-1}\sum_{j=0}^{n-1}[\varphi^{n-j},\alpha_{n-1}^j]\right].$$
\end{example}

\begin{example}
For the case $E=T_X^*$, we have
$$\bar{D}_t^{\bullet}=\dbar_{T_X^*}^{\bullet}+[\varphi(t),-]^*,$$
where $[\varphi(t),-]^*:\Omega^{0,q}(T_X^*)\rightarrow\Omega^{0,q+1}(T_X^*)$ is given by $$[\varphi(t),\eta]^*(v):=[\varphi(t),\eta(v)]-(-1)^q\eta([\varphi(t),v])=\varphi(t)\lrcorner\partial(\eta(v))-(-1)^q\eta([\varphi(t),v])$$
for $v\in\Omega^0(T_X)$. Since $$\varphi(t)\lrcorner\partial(\eta(v))-(-1)^q\eta([\varphi(t),v])=(\varphi(t)\lrcorner\partial\eta)(v)+v\lrcorner\partial(\varphi(t)\lrcorner\eta),$$ the obstruction is given by
$$O^q_{n,n-1}[j_0^{n-1}(\alpha_{n-1})(t)]=\left[t^{n-1}\sum_{j=0}^{n-1}(\varphi^{n-j}\lrcorner\partial\alpha_{n-1}^j + \partial(\varphi^{n-j}\lrcorner\alpha_{n-1}^j))\right].$$

For $E=\wedge^qT^*_X$, we have
\begin{align*}
\bar{D}_t(\alpha_1\wedge\cdots\wedge\alpha_p) & = \sum_{j=1}^{p-1}(-1)^{j-1}\alpha_1\wedge\cdots\wedge\bar{D}_t\alpha_j\wedge\cdots\wedge\alpha_p\\
& = \dbar(\alpha_1\wedge\cdots\wedge\alpha_p)+\sum_{j=1}^p(-1)^{j-1}\alpha_1\wedge\cdots\wedge[\varphi(t),\alpha_j]^*\wedge\cdots\wedge\alpha_p,
\end{align*}
where $\alpha_j\in\Omega^0(T^*_X)$. Then
\begin{align*}
  & \sum_{j=1}^{p-1}(-1)^{j-1}\alpha_1\wedge\cdots\wedge(\varphi(t)\lrcorner\partial\alpha_j)\wedge\cdots\wedge\alpha_p + \sum_{j=1}^{p-1}(-1)^{j-1}\alpha_1\wedge\cdots\wedge\partial(\varphi(t)\lrcorner\alpha_j)\wedge\cdots\wedge\alpha_p\\
= & \varphi(t)\lrcorner(\partial(\alpha_1\wedge\cdots\wedge\alpha_p))+\partial(\varphi(t)\lrcorner(\alpha_1\wedge\cdots\wedge\alpha_p)).
\end{align*}
Hence the obstruction map is given by
$$O^q_{n,n-1}[j_0^{n-1}(\alpha_{n-1})(t)] = \left[t^{n-1}\sum_{j=0}^{n-1}(\varphi^{n-j}\lrcorner\partial\alpha_{n-1}^j + \partial(\varphi^{n-j}\lrcorner\alpha_{n-1}^j))\right].$$
\end{example}

These two examples recover the obstruction formulae in \cite{Ye_jumping, Ye_jumping_tangent}.

\section{An application: jumping of $\dim_{\bb{C}}H^1(\cu{X}_t,\text{End}(T_{\cu{X}_t}))$}\label{sec:CY_stable}

Physicists are interested in knowing whether the dimension of the cohomology group $H^1(\cu{X}_t,\text{End}(T_{\cu{X}_t}))$ is locally constant under small deformations $X$ \cite{Huybrechts95}. The expectation is that $\dim_\mathbb{C} H^1(\cu{X}_t,\text{End}(T_{\cu{X}_t}))$ does not jump along {\em any} deformation of a Calabi-Yau manifold $X$.

In this section, we apply our results to prove a weaker statement, namely, the constancy of this dimension when the Calabi-Yau manifold $X$ satisfies an extra unobstructedness assumption.
We will first prove that, in some nice (but restrictive) cases, the dimension $\dim_{\mathbb{C}}H^1(\cu{X}_t, A(\cu{E}_t))$ does not jump at $t=0$ for any deformation of the pair $(X,E)$.

To do this, we choose a harmonic basis $\{(A_i,\varphi_i)\}_{i=1}^m$ for $H^1(X,A(E))$. In \cite{CS_pair}, we proved that the obstruction map $Ob_{(X,E)}:H^1(X,A(E))\rightarrow H^2(X,A(E))$ of the deformation theory of $(X,E)$ is given by
$$Ob_{(X,E)}:\dpsum{i=1}{m}t_i(A_i,\varphi_i)\mapsto\bb{H}[(A(t),\varphi(t)),(A(t),\varphi(t))],$$
where $\bb{H}$ is the harmonic projection and $(A(t),\varphi(t))$ satisfies
$$(A(t),\varphi(t))=\dpsum{i=1}{m}t_i(A_i,\varphi_i)-\frac{1}{2}\dbar^*_{A(E)}G_E[(A(t),\varphi(t)),(A(t),\varphi(t))],$$
where $G_E$ is the Green's operator and $\dbar^*_{A(E)}$ is  the formal adjoint of $\dbar_{A(E)}$. Moreover, $(A(t),\varphi(t))$ satisfies the Maurer-Cartan equation if and only if $Ob_{(X,E)}=0$.

Suppose now $Ob_{(X,E)}=0$. Then we have
$$\dbar_{A(E)}(A(t),\varphi(t))+\frac{1}{2}[(A(t),\varphi(t)),(A(t),\varphi(t))]=0.$$
Differentiating $(A(t),\varphi(t))$ with respect to $t_i$ and setting $t=0$, we get
$$\pd{}{t_i}|_{t=0}(A(t),\varphi(t))=(A_i,\varphi_i).$$
Hence, for each $i=1,\dots,m$, if we define $(B(t),\psi(t))_i$ to be
$$(B(t),\psi(t))_i=\pd{}{t_i}(A(t),\varphi(t)),$$
then $(B(t),\psi(t))_i$ satisfies
$$\dbar_{A(E)}(B(t),\psi(t))_i+[(A(t),\varphi(t)),(B(t),\psi(t))_i]=0$$
and $\{(B_0,\psi_0)_i\}_{i=1}^m$ forms a basis for $H^1(X,A(E))$.

Note that the differential operator $\bar{D}_{A(\cu{E}_t)}$ defined by
$$\bar{D}_{A(\cu{E}_t)}:=\dbar_{A(E)}+[(A(t),\varphi(t)),-]$$
satisfies $\bar{D}_{A(\cu{E}_t)}^2=0$ and the Leibniz rule
$$\bar{D}_{A(\cu{E}_t)}(fs)=(\dbar+\varphi(t)\lrcorner\partial)f\otimes s+f\bar{D}_{A(\cu{E}_t)}s.$$
It follows that $\bar{D}_{A(\cu{E}_t)}$ defines a deformation $\{(\cu{X}_t,A(\cu{E}_t))\}_{t\in \text{Def}(X,E)}$ of the pair $(X,A(E))$. In fact, $A(\cu{E}_t)$ is the Atiyah extension of the deformed bundle $\cu{E}_t$ on $\cu{X}_t$.

\begin{lemma}\label{lemma:unobstructed_extend}
Suppose $Ob_{(X,E)}=0$. Then for any $[(B,\psi)]\in H^1(X,A(E))$, there exists $(B(t),\psi(t))$ such that $\bar{D}_{A(\cu{E}_t)}(B(t),\psi(t))=0$ and
$[(B_0,\psi_0)]=[(B,\psi)]$. Hence any element in $H^1(X,A(E))$ admits an extension to $H^1(\cu{X}_t,A(\cu{E}_t))$ for any deformation of $(X,E)$. In particular $O_{n,n-1}^1\equiv 0$ for all $n\geq 1$.
\end{lemma}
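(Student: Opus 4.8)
The plan is to produce the extensions tautologically, by differentiating the universal Maurer--Cartan family. Since $Ob_{(X,E)}=0$, the deformation space $Def(X,E)$ is smooth near $0$ and carries the Maurer--Cartan family $(A(t),\varphi(t))$ discussed above, with $(A_i,\varphi_i)=\partial_{t_i}(A(t),\varphi(t))|_{t=0}$ forming a (harmonic) basis of $H^1(X,A(E))$ and each $(B(t),\psi(t))_i:=\partial_{t_i}(A(t),\varphi(t))$ satisfying $\bar{D}_{A(E_t)}(B(t),\psi(t))_i=0$. Given $[(B,\psi)]=\sum_{i=1}^m c_i[(A_i,\varphi_i)]\in H^1(X,A(E))$, I would simply set $(B(t),\psi(t)):=\sum_{i=1}^m c_i\,\partial_{t_i}(A(t),\varphi(t))$. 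As $\bar{D}_{A(E_t)}=\dbar_{A(E)}+[(A(t),\varphi(t)),-]$ is $\mathbb{C}$-linear in its argument, we get $\bar{D}_{A(E_t)}(B(t),\psi(t))=0$; and at $t=0$ this form equals $\sum_i c_i(A_i,\varphi_i)$, which represents $[(B,\psi)]$. This proves the extension statement over $Def(X,E)$.

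For an arbitrary deformation of $(X,E)$ over a polydisk $B$, smoothness of $Def(X,E)$ means it is induced by a holomorphic germ $g:(B,0)\to(Def(X,E),0)$; after a compatible trivialization, the operator governing the induced deformation of the pair $(X,A(E))$ is $\bar{D}_{A(E_{g(s)})}=\dbar_{A(E)}+[(A(g(s)),\varphi(g(s))),-]$. Since the identity $\bar{D}_{A(E_t)}(B(t),\psi(t))=0$ holds for all $t$ in a neighbourhood of $0\in Def(X,E)$ and both sides depend holomorphically on $t$, substituting $t=g(s)$ shows that $(B(g(s)),\psi(g(s)))$ is a $\bar{D}_{A(E_{g(s)})}$-closed analytic family equal to $(B,\psi)$ at $s=0$, i.e.\ an extension of $[(B,\psi)]$ in $H^1(X_s,A(E_s))$.

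Finally, for the vanishing $O^1_{n,n-1}\equiv 0$: the constructions of Sections~\ref{sec:acyclic_resol}--\ref{sec:obstr} apply verbatim to the deformation of the pair $(X,A(E))$, so $H^1((\pi_*\mathcal{D}^{\bullet})_0\otimes k(0))\cong H^1(X,A(E))$ by Proposition~\ref{lem:unnatural_isom}. For $n\geq 1$ and any $c_{n-1}\in H^1((\pi_*\mathcal{D}^{\bullet})_0\otimes\mathcal{O}_{B,0}/m_0^n)$, I would restrict $c_{n-1}$ to $t=0$, extend the resulting class in $H^1(X,A(E))$ to a $\bar{D}_t$-closed family by the previous paragraphs, and take its $n$-jet; this produces a class in $H^1((\pi_*\mathcal{D}^{\bullet})_0\otimes\mathcal{O}_{B,0}/m_0^{n+1})$ restricting to $c_{n-1}|_{t=0}$, which is exactly condition~(2) of Proposition~\ref{prop:extension}. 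Hence condition~(3) holds for every $n$, that is, $O^1_{n,n-1}\equiv 0$.

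I expect the main subtlety to lie precisely in this last reduction: one has to notice that condition~(2) of Proposition~\ref{prop:extension} asks only for agreement at $t=0$, so that extendability of the finitely many classes in $H^1(X,A(E))$ --- rather than of arbitrary jets --- already forces the jet-level obstruction maps to vanish. The functoriality bookkeeping in the second paragraph (identifying $\bar{D}_{A(E_{g(s)})}$ with the operator of the pulled-back deformation) is routine, and the $\mathbb{C}$-linearity argument in the first paragraph is immediate from bilinearity of the bracket.
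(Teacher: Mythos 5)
Your proposal is correct and follows essentially the same route as the paper: the paper's (one-line) proof likewise takes the derivatives $(B(t),\psi(t))_i=\partial_{t_i}(A(t),\varphi(t))$ as $\bar{D}_{A(E_t)}$-closed extensions of the harmonic basis and concludes by linearity, with the passage to $O^1_{n,n-1}\equiv 0$ left implicit via Proposition~\ref{prop:extension}. You merely spell out two steps the paper suppresses --- pulling back along a classifying map to handle an arbitrary deformation, and the reduction to condition~(2) of Proposition~\ref{prop:extension} --- both of which are carried out correctly.
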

\begin{proof}
Since this is true for the harmonic basis $\{(B_0,\psi_0)_i\}_{i=1}^m$, it is true for any element in $H^1(X,A(E))$.
\end{proof}

\begin{lemma}\label{lemma:unobstructed_jumping}
Let $X$ be a compact complex manifold and $E\rightarrow X$ be a holomorphic vector bundle. Suppose the deformation of the pair $(X,E)$ is always unobstructed and $\dim_{\mathbb{C}}H^0(\cu{X}_t,A(\cu{E}_t))$ does not jump at $t=0$ along any deformations of $(X,E)$. Then $\dim_{\mathbb{C}}H^1(\cu{X}_t,A(\cu{E}_t))$ does not jump at $t=0$ along any deformation of $(X,E)$.
\end{lemma}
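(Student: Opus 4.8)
The strategy is to combine Lemma~\ref{lemma:unobstructed_extend} with the main result, Theorem~\ref{thm:main_2}, applied to the deformation $\{(X_t,A(E_t))\}_{t\in Def(X,E)}$ of the pair $(X,A(E))$. By Theorem~\ref{thm:main_2} (with $E$ replaced by $A(E)$ and $q=1$), the dimension $\dim_{\mathbb{C}}H^1(X_t,A(E_t))$ is locally constant if and only if $O^1_{m,m-1}\equiv 0$ and $O^0_{n,n-1}\equiv 0$ for all $m,n\geq 1$. Lemma~\ref{lemma:unobstructed_extend} already gives us $O^1_{m,m-1}\equiv 0$ for all $m$, since the unobstructedness hypothesis $Ob_{(X,E)}=0$ holds (it holds for every deformation of $(X,E)$ by assumption). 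So the only thing that remains is to show that the lower obstruction $O^0_{n,n-1}\equiv 0$ for all $n\geq 1$.

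For the vanishing of $O^0_{n,n-1}$, I would invoke Corollary~\ref{prop:nonexact} in degree $q=1$: $O^0_{n,n-1}\equiv 0$ for all $n\geq 1$ is equivalent to the statement that every local extension of every non-exact $\beta\in\ker(\dbar^1_{A(E)})$ remains non-exact. Equivalently, one can argue directly at the level of $H^0$: the obstruction $O^0_{n,n-1}$ governs whether elements of $H^0(X,A(E))$ extend to order $n$, so by Proposition~\ref{prop:extension} its vanishing is equivalent to every holomorphic section of $A(E)$ extending to a holomorphic section of $A(E_t)$ on each nearby fiber. The hypothesis that $\dim_{\mathbb{C}}H^0(X_t,A(E))$ does not jump at $t=0$ is exactly what makes this work: by upper semicontinuity the dimension is constant near $0$, hence by Grauert's theorem (Theorem~\ref{prop:locally_freeness}) the sheaf $\pi_*\mathcal{A}(\mathcal{E})$ is locally free and its fibre at $0$ is $H^0(X,A(E))$, so every global section over the central fibre extends; this forces $O^0_{n,n-1}\equiv 0$ via Proposition~\ref{prop:extension}(1)$\Leftrightarrow$(3).

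Thus the plan is: (i) note $A(E_t)$ is the Atiyah extension of $E_t$, so $\{(X_t,A(E_t))\}$ is a genuine deformation of the pair $(X,A(E))$ with Maurer--Cartan data built from $(A(t),\varphi(t))$; (ii) apply Theorem~\ref{thm:main_2} to this pair in degree $q=1$, reducing local constancy of $\dim_{\mathbb{C}}H^1(X_t,A(E_t))$ to the two obstruction vanishings $O^1_{m,m-1}\equiv 0$ and $O^0_{n,n-1}\equiv 0$; (iii) get $O^1_{m,m-1}\equiv 0$ from Lemma~\ref{lemma:unobstructed_extend}, using that $Ob_{(X,E)}=0$ for every deformation of $(X,E)$; (iv) get $O^0_{n,n-1}\equiv 0$ from the no-jumping hypothesis on $H^0(X_t,A(E))$, via Grauert's theorem (local freeness of $\pi_*$) and Proposition~\ref{prop:extension}. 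I expect step (iv) to be the main subtlety: one must be careful that the no-jumping hypothesis for $H^0$ is applied along the specific family $\{(X_t,A(E_t))\}$ (not an arbitrary deformation of $(X,A(E))$), and that the identification of the central fibre of $\pi_*\mathcal{A}(\mathcal{E})$ with $H^0(X,A(E))$ and the consequent extendability of sections is phrased cleanly enough to feed into Proposition~\ref{prop:extension}(3). Everything else is a direct citation of results already established in the excerpt.
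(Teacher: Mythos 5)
Your proposal is correct and follows essentially the same route as the paper's proof: apply Theorem~\ref{thm:main_2} with $q=1$ to the induced deformation $\{(X_t,A(E_t))\}$ of $(X,A(E))$, obtain $O^1_{m,m-1}\equiv 0$ from Lemma~\ref{lemma:unobstructed_extend} via unobstructedness, and obtain $O^0_{n,n-1}\equiv 0$ from the no-jumping hypothesis on $H^0$. The paper extracts this last vanishing by identifying $H^0(X_t,A(E_t))$ with $\ker\bar{D}_{A(E_t)}$ and invoking the $q=0$ case of Theorem~\ref{thm:main_2}, which is the same content as your Grauert/Proposition~\ref{prop:extension} argument.
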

\begin{proof}
Since $Ob_{(X,E)}=0$, Lemma \ref{lemma:unobstructed_extend} allows us to extend any element in $H^1(X,A(E))$ to $H^1(\cu{X}_t,A(\cu{E}_t))$. Since
\begin{align*}
H^0(\cu{X}_t,A(\cu{E}_t))& = \ker(\dbar_{A(\cu{E}_t)}:\Omega^0(A(E))\rightarrow\Omega^{0,1}_t(A(E)))\\
& = \ker(\bar{D}_{A(\cu{E}_t)}:\Omega^0(A(E))\rightarrow\Omega^{0,1}(A(E))),
\end{align*}
the assumption that $\dim_{\mathbb{C}}H^0(\cu{X}_t,A(\cu{E}_t))$ does not jump at $t=0$ implies $O_{n,n-1}^0\equiv 0$ for all $n\geq 1$. Now apply Theorem \ref{thm:main_2}.
\end{proof}

We are now going to prove that under certain assumptions, $\dim_{\mathbb{C}}H^1(\cu{X}_t,\text{End}(T_{\cu{X}_t}))$ does not jump at $t=0$ along any deformation of $X_t$.

First, when $E=T_X$, we have a canonical lift $L:H^1(X,T_X)\rightarrow H^1(X,A(T_X))$, defined by
$$L:\varphi\mapsto (-\nabla_{\bullet}\varphi-T(\varphi,\bullet),\varphi),$$
where $T:\Omega^{0,p}(T_X)\otimes\Omega^{0,q}(T_X)\rightarrow\Omega^{0,p+q}(T_X)$ is the graded torsion, defined by
$$T(\varphi,\psi)=\varphi\lrcorner\nabla\psi-(-1)^{pq}\psi\lrcorner\nabla\varphi-[\varphi,\psi].$$
Moreover, if $Ob_X=0$, then we have a Maurer-Cartan element $\varphi(t)\in\Omega^{0,1}(T_X)$ and we obtain a deformation of $(X,T_X)$ by
$$\bar{D}_t=\dbar_{T_X}+\varphi(t)\lrcorner\nabla-\nabla_{\bullet}\varphi(t)-T(\varphi(t),\bullet)=\dbar_{T_X}+[\varphi(t),\bullet].$$
In fact, the deformation induced by this operator is isomorphic to the family $\{(\cu{X}_t,T_{\cu{X}_t})\}_{t\in \text{Def}(X)}$, where $T_{\cu{X}_t}$ is the holomorphic tangent bundle of $\cu{X}_t$. Therefore, $L$ induces a natural embedding
$$\text{Def}(X) \subset \text{Def}(X,T_X).$$

By a {\em Calabi-Yau} $n$-fold we mean an $n$-dimensional compact K\"ahler manifold $X$ with trivial canonical line bundle $K_X \cong \mathcal{O}_X$ and also $H^{0,p}(X)=0$ for all $p\neq 0,n$.

\begin{theorem}\label{thm:CY_jumping}
Suppose that $X$ is a Calabi-Yau manifold such that deformations of the pair $(X,T_X)$ are unobstructed. Then $\dim_{\mathbb{C}}H^1(\cu{X}_t,\text{End}(T_{\cu{X}_t}))$ does not jump at $t=0$ for any deformation of $X$ .
\end{theorem}
\begin{proof}
Let $E=T_X$. Since the pair $(X,E)$ admits unobstructed deformations, Lemma \ref{lemma:unobstructed_extend} allows us to extend any element in $H^1(X,A(E))$ to element in $H^1(\cu{X}_t,A(\cu{E}_t))$, where $t\in \text{Def}(X,E)$. Consider the Atiyah exact sequence of $\cu{E}_t$ (note: this may {\em not} be the tangent bundle of $\cu{X}_t$ in general!) over $\cu{X}_t$:
$$0\rightarrow\text{End}(\cu{E}_t)\rightarrow A(\cu{E}_t)\rightarrow T_{\cu{X}_t}\rightarrow 0,$$
which gives rise to the injective map $\iota^*_t:H^0(\cu{X}_t,\text{End}(\cu{E}_t))\rightarrow H^0(\cu{X}_t, A(\cu{E}_t))$. Since the tangent bundle of a Calabi-Yau manifold is stable, we have $H^0(X,\text{End}_0(T_X))=0$ and so
$$H^0(X,A(E))\cong H^0(X,\text{End}(T_X))\cong H^0(X,\mathcal{O}_X)=\mathbb{C}.$$
Since the identity map $id_{\cu{E}_t}$ is always a non-zero holomorphic section of $H^0(\cu{X}_t,\text{End}(\cu{E}_t))$ and $\iota^*_t:H^0(\cu{X}_t,\text{End}(\cu{E}_t)\rightarrow H^0(\cu{X}_t,A(\cu{E}_t))$ is injective, we get
$$1 \leq \dim_{\mathbb{C}}H^0(\cu{X}_t,A(\cu{E}_t)) \leq \dim_{\mathbb{C}}H^0(X,A(E)) = 1$$
for $|t|$ small. By Lemma \ref{lemma:unobstructed_jumping}, we conclude that $\dim_{\mathbb{C}}H^1(\cu{X}_t,A(\cu{E}_t))$ does not jump at $t=0$ along any deformation of $(X,T_X)$, and in particular, along any deformation of $X$ itself.

For $t\in \text{Def}(X)\subset \text{Def}(X,T_X)$, we have a family of canonical lifts $L_t:H^1(\cu{X}_t,T_{\cu{X}_t})\rightarrow H^1(\cu{X}_t,A(\cu{E}_t))$, since $A(\cu{E}_t)$ is the Atiyah extension of $T_{\cu{X}_t}$ for $t\in \text{Def}(X)$. So the map $\pi^*_t:H^1(\cu{X}_t,A(\cu{E}_t))\rightarrow H^1(\cu{X}_t,T_{\cu{X}_t})$ is surjective and we obtain the following exact sequence
$$0\rightarrow H^1(\cu{X}_t,\text{End}(T_{\cu{X}_t}))\rightarrow H^1(\cu{X}_t,A(\cu{E}_t))\rightarrow H^1(\cu{X}_t,T_{\cu{X}_t})\rightarrow 0.$$
Since $\dim_{\mathbb{C}}H^1(\cu{X}_t,T_{\cu{X}_t})=\dim_{\mathbb{C}}H^{n-1,1}(\cu{X}_t)$ does not jump at $t=0$ for $t\in \text{Def}(X)$ with $|t|$ small, we see that $\dim_{\mathbb{C}}H^1(\cu{X}_t,\text{End}(T_{\cu{X}_t}))$ does not jump at $t=0$ for any deformation of $X$.
\end{proof}

\appendix

\section{Convergence}\label{sec:covergence}

Consider an element $\alpha\in\ker(\dbar^q)$. Suppose that the obstruction maps $O_{n,n-1}^q$ vanish for all $n\geq 1$. Then we obtain a formal extension $\alpha(t)$ of $\alpha$, that is, as a formal power series in $\Omega^{0,q}(E)$,
$$\bar{D}_t^q\alpha(t)=0.$$
In this appendix, we show that one can always choose an extension $\alpha(t)$ with a nonzero radius of convergence. To achieve this, we shall work on the Kuranishi family of $(X,E)$ \cite{Kuranishi65}, following the approach of the book \cite{Morrow-Kodaira_book}.

We choose a hermitian metric for $E$ and consider the equation
$$\alpha(t)+\dbar^*_EG_E(\varphi(t)\lrcorner\nabla+A(t))\alpha(t)=0,\quad \alpha(0)=\alpha\in\ker(\dbar^q_E),$$
with $\alpha(t)$ holomorphic in the variable $t$. Then $\alpha(t)$ can be solved by the recursive relations:
$$\alpha^n+\sum_{i=0}^{n-1}\dbar^*_EG_E(\varphi_{n-i}\lrcorner\nabla+A_{n-i})\alpha^i=0,\quad n\geq 1.$$
We shall prove that $\alpha(t):=\sum_{n=0}^{\infty}\alpha^nt^n$ converges uniformly in the H\"older norm $\|\cdot\|_{k+\alpha}$. First of all, let us recall the obvious estimates
\begin{align*}
\|[(A,\varphi),(B,\psi)]\|_{k+\alpha} & \leq C_{k,\alpha}\|(A,\varphi)\|_{k+\alpha+1}\|(B,\psi)\|_{k+\alpha+1},\\
\|(\varphi\lrcorner\nabla+A)\delta\|_{k+\alpha} & \leq C_{k,\alpha}'\|(A,\varphi)\|_{k+\alpha+1}\|\delta\|_{k+\alpha+1}
\end{align*}
for any $(A,\varphi),(B,\psi)\in\Omega^{\bullet}(\mathcal{E})$ and $\delta\in\Omega^{0,\bullet}(E)$, where $C_{k,\alpha},C_{k\alpha}'$ are positive constants which depend only on $k,\alpha$. We may assume that $C_{k,\alpha}$ is larger so that
\begin{align*}
\|[(A,\varphi),(B,\psi)]\|_{k+\alpha} & \leq C_{k,\alpha}\|(A,\varphi)\|_{k+\alpha}\|(B,\psi)\|_{k+\alpha},\\
\|(\varphi\lrcorner\nabla+A)\delta\|_{k+\alpha} & \leq C_{k,\alpha}\|(A,\varphi)\|_{k+\alpha}\|\delta\|_{k+\alpha}
\end{align*}
for any $(A,\varphi),(B,\psi)\in\Omega^{\bullet}(A(E))$ and $\delta\in\Omega^{0,\bullet}(E)$.
Next, we have the estimates
\begin{align*}
\|\dbar_E^*G_E\delta\|_{k+\alpha} & \leq \widetilde{C}_{k,\alpha}\|\delta\|_{k-1+\alpha},\\
\|\dbar_{A(E)}^*G_{A(E)}(A,\varphi)\|_{k+\alpha} & \leq \widetilde{C}'_{k,\alpha}\|(A,\varphi)\|_{k-1+\alpha}
\end{align*}
for all $(A,\varphi)\in\Omega^{0,\bullet}(A(E))$ and $\delta\in\Omega^{0,\bullet}(E)$, where $G_{A(E)},G_E$ are Green's operators correspond to $A(E),E$, respectively, and $\widetilde{C}_{k,\alpha},\widetilde{C}'_{k,\alpha}$ are positive constants depending only on $k,\alpha$. Again we assume that $\widetilde{C}_{k,\alpha}$ is larger.

\begin{proposition}
For $|t|$ small, $\alpha(t)=\sum_{n=0}^{\infty}\alpha^nt^n$ converges in the norm $\|\cdot\|_{k+\alpha}$ and $\alpha(t)$ is a smooth solution.
\end{proposition}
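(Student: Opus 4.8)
The plan is to run Kodaira's method of majorant power series. Recall that the formal extension $\alpha(t)=\sum_{n\ge 0}\alpha^n t^n$ is produced from $\alpha^0=\alpha$ by the recursion
$$\alpha^n=-\sum_{i=0}^{n-1}\dbar^*_E G_E\big((\varphi_{n-i}\lrcorner\nabla+A_{n-i})\alpha^i\big),\qquad n\ge 1,$$
equivalently $\alpha(t)=\alpha-\dbar^*_E G_E\big((\varphi(t)\lrcorner\nabla+A(t))\alpha(t)\big)$, where $(A(t),\varphi(t))=\sum_{m\ge 1}(A_m,\varphi_m)t^m$ is the Maurer--Cartan element of $(\mathcal{X},\mathcal{E})$. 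By the Kuranishi theory for pairs (\cite{Kuranishi65}, \cite{CS_pair}) the series $(A(t),\varphi(t))$ converges in $\|\cdot\|_{k+\alpha}$ for $|t|$ small, so there are constants $b,c>0$ with $\|(A_m,\varphi_m)\|_{k+\alpha}\le b\,c^m$ for all $m\ge 1$; hence $g(t):=\sum_{m\ge 1}b\,c^m t^m=\tfrac{bct}{1-ct}$ is a convergent majorant of $(A(t),\varphi(t))$ with $g(0)=0$. We must establish two things: that $\alpha(t)$ converges in $\|\cdot\|_{k+\alpha}$ on a disk of positive radius, and that its sum is a smooth solution of $\bar D_t\alpha(t)=0$.

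For convergence I would combine the two families of estimates recalled above. Since $\dbar^*_E G_E$ gains one derivative while $\varphi\lrcorner\nabla+A$ costs one, their composite is bounded in the single norm $\|\cdot\|_{k+\alpha}$ with a constant $K:=\widetilde C_{k,\alpha}\,C_{k,\alpha}$, giving
$$\|\alpha^n\|_{k+\alpha}\le K\sum_{i=0}^{n-1}\|(A_{n-i},\varphi_{n-i})\|_{k+\alpha}\,\|\alpha^i\|_{k+\alpha},\qquad n\ge 1.$$
Set $h(t):=\|\alpha\|_{k+\alpha}\big/\big(1-Kg(t)\big)$, which is holomorphic near $t=0$ because $g(0)=0$; writing $h(t)=\sum_{n\ge 0}h_n t^n$, the identity $h=\|\alpha\|_{k+\alpha}+Kgh$ shows $h_0=\|\alpha\|_{k+\alpha}$, $h_n\ge 0$, and $h_n=K\sum_{i=0}^{n-1}g_{n-i}h_i$ for $n\ge 1$. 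A one-line induction then yields $\|\alpha^n\|_{k+\alpha}\le h_n$ for all $n$, so $\alpha(t)$ is majorized by $h$ and converges in $\|\cdot\|_{k+\alpha}$ for $|t|$ less than the positive radius of convergence of $h$. Running this for each $k$ and bootstrapping in the fixed-point equation $\alpha(t)=\alpha-\dbar^*_E G_E\big((\varphi(t)\lrcorner\nabla+A(t))\alpha(t)\big)$ via elliptic regularity shows the limit is $C^\infty$; as in \cite{Morrow-Kodaira_book}, one single radius of convergence can be taken uniform over a harmonic basis of $H^q(X,E)$.

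It then remains to check $\bar D_t\alpha(t)=0$. Put $\eta(t):=\bar D_t\alpha(t)=\sum_{n\ge 0}\eta^n t^n$ and $\psi^n:=\sum_{i=0}^{n-1}(\varphi_{n-i}\lrcorner\nabla+A_{n-i})\alpha^i$, so that $\alpha^n=-\dbar^*_E G_E\psi^n$ and $\eta^n=\dbar_E\alpha^n+\psi^n$ for $n\ge 1$, while $\eta^0=\dbar_E\alpha=0$. Argue by induction on $n$: assuming $\eta^0=\dots=\eta^{n-1}=0$, the integrability $\bar D_t^2=0$ (equivalently the Maurer--Cartan equation) forces $\dbar_E\eta^n=0$ and hence $\dbar_E\psi^n=0$; then the Hodge identity $I=H+\dbar_E\dbar^*_E G_E+\dbar^*_E\dbar_E G_E$, together with $\dbar_E\psi^n=0$, collapses $\eta^n=(I-\dbar_E\dbar^*_E G_E)\psi^n$ to $\eta^n=H\psi^n$, a harmonic form. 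But $H\psi^n$ represents exactly $O^q_{n,n-1}$ evaluated at the class of the $(n-1)$-jet $\alpha^0+\dots+\alpha^{n-1}t^{n-1}$ (cf. Section \ref{sec:obstr}), which vanishes by the standing hypothesis $O^q_{n,n-1}\equiv 0$; hence $\eta^n=0$. Therefore $\bar D_t\alpha(t)=0$.

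The step I expect to be the main obstacle is the convergence bookkeeping: one must verify carefully that the derivative lost to $\varphi\lrcorner\nabla+A$ is precisely recovered by the smoothing $\dbar^*_E G_E$, so that the composite estimate really holds in one fixed Hölder norm, and that the resolvent majorant $h=\|\alpha\|_{k+\alpha}/(1-Kg)$ genuinely has non-negative Taylor coefficients satisfying the required convolution recursion --- this is where $g(0)=0$, i.e. $(A_0,\varphi_0)=0$, enters. By contrast, the identification $\eta^n=H\psi^n=$ ``the obstruction class'' is the computation already carried out in Section \ref{sec:obstr}, and the elliptic bootstrap for smoothness together with the uniform radius of convergence over a basis are routine given \cite{Morrow-Kodaira_book}.
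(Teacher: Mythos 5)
Your argument is correct and is essentially the same method as the paper's: a majorant power-series argument in a fixed H\"older norm built on the composite estimate $\|\dbar^*_EG_E((\varphi\lrcorner\nabla+A)\delta)\|_{k+\alpha}\le\widetilde C_{k,\alpha}C_{k,\alpha}\|(A,\varphi)\|_{k+\alpha}\|\delta\|_{k+\alpha}$, followed by elliptic regularity. The one genuine difference is the majorant itself: since the recursion for $\alpha^n$ is \emph{linear} in $\alpha$ (the quadratic Maurer--Cartan recursion has already been solved for $(A(t),\varphi(t))$), your resolvent series $h=\|\alpha\|_{k+\alpha}/(1-Kg)$ with a geometric majorant $g$ of $(A(t),\varphi(t))$ does the job and is a clean simplification; the paper instead reuses Kodaira's series $B(t)=\frac{\beta}{16\gamma}\sum_n\gamma^nt^n/n^2$, quoting the majorization $(A^n(t),\varphi^n(t))\ll B(t)$ from the Kuranishi construction and the submultiplicativity $B^2\ll(\beta/\gamma)B$, which ties the radius $\gamma^{-1}$ directly to the Kuranishi data. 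Two caveats on your write-up. For smoothness, ``running the argument for each $k$'' risks a radius of convergence that shrinks with $k$, and the fixed-point map does not gain derivatives ($\dbar^*_EG_E$ recovers exactly the one derivative that $\varphi\lrcorner\nabla+A$ costs), so a per-$k$ bootstrap in that equation stalls; the paper's route is to note that the limit satisfies $(\partial^2/\partial t\partial\bar t+\Delta_E+\dbar^*_E(\varphi(t)\lrcorner\nabla+A(t)))\alpha(t)=0$, whose operator is elliptic jointly in $(z,t)$ for $|t|$ small, giving $C^\infty$ regularity in all variables on a single disk. Finally, your last paragraph proving $\bar D_t\alpha(t)=0$ goes beyond this proposition (the paper defers it to the two subsequent propositions of the appendix), and the step ``$\mathbb H\psi^n$ represents $O^q_{n,n-1}$, which vanishes, hence $\eta^n=0$'' is too quick as stated: the obstruction class lives in $H^{q+1}$ of the truncated complex $(\pi_*\mathcal D^{\bullet})_0\otimes\mathcal O_{B,0}/m_0^n$, so its vanishing only makes $t^{n-1}\psi^n$ exact for the truncated differential $\bar D_{n-1}$; deducing that $\psi^n$ itself is $\dbar_E$-exact (hence has no harmonic part) requires the iterative descent carried out in the paper's final appendix proposition.
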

\begin{proof}
The proof is rather standard, and we follow the book \cite{Morrow-Kodaira_book} very closely.

First we observe that $\delta(t):=t\cdot\alpha(t)$ also satisfies the equation
$$\delta(t)+\dbar^*_EG_E((\varphi(t)\lrcorner\nabla+A(t))\delta(t))=0.$$
Denote $\delta_n(t)=\delta(t)\text{ mod }t^{n+1}$ (similar meaning for $A^n(t)$ and $\varphi^n(t)$). Let
$$B(t):=\frac{\beta}{16\gamma}\sum_{n=1}^{\infty}\frac{\gamma^n}{n^2}t^n:=\sum_{n=1}^{\infty}B^nt^n,$$
where $\beta,\gamma$ are positive constants which are to be chosen. We want to choose $\beta,\gamma$ such that $\|\delta^n\|_{k+\alpha}\leq B^n$ for all $n\geq 1$ (this condition will be denoted by $\|\delta_n(t)\|_{k+\alpha} \ll B(t)$). This is of course possible for $n=1$. Hence we assume that this is possible up to order $n-1$, for some $n>1$.

For any $(A,\varphi)$ and $\delta$, we have
$$\|\dbar_E^*G_E((\varphi\lrcorner\nabla+A)\delta)\|_{k+\alpha}\leq \widetilde{C}_{k,\alpha}\|(\varphi\lrcorner\nabla+A)\delta\|_{k-1+\alpha}\leq
\widetilde{C}_{k,\alpha}C_{k,\alpha}\|(A,\varphi)\|_{k+\alpha}\|\delta\|_{k+\alpha},$$
so the induction hypothesis gives
\begin{align*}
\|\delta_n(t)\|_{k+\alpha} & \leq \widetilde{C}_{k,\alpha}C_{k,\alpha}\|(A^n(t),\varphi^n(t))\|_{k+\alpha}\|\delta_{n-1}(t)\|_{k+\alpha}\\
& \ll \widetilde{C}_{k,\alpha}C_{k,\alpha}\|(A^n(t),\varphi^n(t))\|_{k+\alpha}B(t).
\end{align*}
It follows from Proposition 2.4, p.162 in \cite{Morrow-Kodaira_book} that, when $\beta,\gamma$ are chosen such that
$$\widetilde{C}_{k,\alpha}C_{k,\alpha}\frac{\beta}{\gamma}<1\quad\text{ and }\quad \|(A^1(t),\varphi^1(t))\|_{k+\alpha} \ll B(t),$$
we have $\|(A^n(t),\varphi^n(t))\|_{k+\alpha} \ll B(t)$ for any $n\geq 1$. Hence
$$\|\delta_n(t)\|_{k+\alpha} \ll \widetilde{C}_{k,\alpha}C_{k,\alpha}(B(t))^2.$$
It can also be proved (see Lemma 3.6, p. 50 in \cite{Morrow-Kodaira_book}) that
$$(B(t))^2 \ll \frac{\beta}{\gamma}B(t).$$
Therefore, for the above choices of $\beta,\gamma$, we have
$$\|\delta_n(t)\|_{k+\alpha} \ll B(t).$$
Since $B(t)$ converges on $|t|<\gamma^{-1}$, we see that $\delta(t)$, and hence $\alpha(t)$, also converges there.

Finally, $\alpha(t)$ satisfies
$$\left(\frac{\partial^2}{\partial t\partial\bar{t}}+\Delta_E+\dbar_E^*(\varphi(t)\lrcorner\nabla+A(t))\right)\alpha(t) = 0.$$
Since the operator
$$\frac{\partial^2}{\partial t\partial\bar{t}}+\Delta_E+\dbar_E^*(\varphi(t)\lrcorner\nabla+A(t))$$
is elliptic for $|t|$ small, regularity guarantees smoothness of $\alpha(t)$.
\end{proof}

Next we have the following

\begin{proposition}
The $\alpha(t)$ defined above satisfies
$$\bar{D}_t\alpha(t)=(\dbar_E+\varphi(t)\lrcorner\nabla+A(t))\alpha(t)=0\text{ mod }t^n$$
if and only if $\mathbb{H}((\varphi(t)\lrcorner\nabla+A(t))\alpha(t))=0\text{ mod }t^n$.
\end{proposition}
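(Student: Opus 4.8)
The plan is to unwind the defining equation of $\alpha(t)$ via the Hodge decomposition so that $\bar{D}_t\alpha(t)$ splits into a harmonic piece and a $\dbar_E^*$-exact piece, and then to bootstrap order by order using the integrability $\bar{D}_t^2=0$.

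First I would set $\beta(t):=(\varphi(t)\lrcorner\nabla+A(t))\alpha(t)$, so that the recursive equation defining $\alpha(t)$ reads $\alpha(t)=\alpha-\dbar_E^*G_E\beta(t)$ with $\alpha=\alpha(0)\in\ker(\dbar_E^q)$. Since $\dbar_E\alpha=0$, a direct computation gives $\bar{D}_t\alpha(t)=\dbar_E\alpha(t)+\beta(t)=\beta(t)-\dbar_E\dbar_E^*G_E\beta(t)$. Invoking the Hodge decomposition $\beta(t)=\mathbb{H}\beta(t)+\dbar_E\dbar_E^*G_E\beta(t)+\dbar_E^*\dbar_E G_E\beta(t)$ (using that $G_E$ commutes with $\dbar_E,\dbar_E^*$ and $\mathbb{H}+\Delta_EG_E=\mathrm{Id}$) yields the key identity $\bar{D}_t\alpha(t)=\mathbb{H}\beta(t)+\dbar_E^*\dbar_E G_E\beta(t)$. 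In particular $\dbar_E^*(\bar{D}_t\alpha(t))=0$ and $\mathbb{H}(\bar{D}_t\alpha(t))=\mathbb{H}\beta(t)$. The forward implication is then immediate: if $\bar{D}_t\alpha(t)\equiv0\pmod{t^n}$ then $\mathbb{H}\beta(t)=\mathbb{H}(\bar{D}_t\alpha(t))\equiv0\pmod{t^n}$.

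For the converse I would argue by induction on $k$ that $\bar{D}_t\alpha(t)\equiv0\pmod{t^k}$ for $1\le k\le n$. The base case $k=1$ holds because the Maurer–Cartan element vanishes at $t=0$, so $\bar{D}_0\alpha(0)=\dbar_E\alpha=0$. For the inductive step, write $\xi(t):=\bar{D}_t\alpha(t)=t^k\xi_k+O(t^{k+1})$. From $\bar{D}_t^2=0$ we get $\dbar_E\xi(t)=-(\varphi(t)\lrcorner\nabla+A(t))\xi(t)$; since $\varphi(t),A(t)$ vanish at $t=0$ and $\xi(t)=O(t^k)$, the right-hand side is $O(t^{k+1})$, hence $\dbar_E\xi_k=0$. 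Together with $\dbar_E^*\xi_k=0$ (from $\dbar_E^*\xi(t)=0$), this shows that $\xi_k$ is harmonic. But $\mathbb{H}\xi(t)=\mathbb{H}\beta(t)\equiv0\pmod{t^n}$ and $k\le n-1$, so the $t^k$-coefficient $\mathbb{H}\xi_k$ of $\mathbb{H}\xi(t)$ vanishes; as $\xi_k$ is harmonic, $\xi_k=\mathbb{H}\xi_k=0$, which completes the induction and hence the proof.

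The only delicate point is the bookkeeping of orders in the inductive step: namely, that the integrability identity genuinely raises the order of vanishing by one — which relies on $\varphi(0)=0=A(0)$ — and that the harmonic projection of the leading term $\xi_k$ is exactly the obstruction $\mathbb{H}\beta(t)$ assumed to vanish. Everything else is formal manipulation with the Hodge decomposition and the Green's operator.
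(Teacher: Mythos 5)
Your argument is correct, and the forward direction coincides with the paper's (both reduce to $\mathbb{H}\dbar_E=0$). The converse, however, is run through a genuinely different and more elementary mechanism. The paper computes $\dbar_E\big((\varphi(t)\lrcorner\nabla+A(t))\alpha(t)\big)$ explicitly --- expanding $\dbar\varphi(t)\lrcorner\nabla+\varphi(t)\lrcorner F_\nabla+\dbar A(t)$ and substituting the Maurer--Cartan equation componentwise --- to arrive at the fixed-point identity $\psi(t)=-\dbar_E^*G_E\big[(\varphi(t)\lrcorner\nabla+A(t))\psi(t)\big]$ mod $t^n$ for $\psi(t):=\bar{D}_t\alpha(t)$, and then concludes by noting that the right-hand side raises the order of vanishing (equivalently, by a norm contraction for small $|t|$). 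You instead apply the integrability $\bar{D}_t^2=0$ directly to $\xi(t)=\bar{D}_t\alpha(t)$ to get $\dbar_E\xi(t)=-(\varphi(t)\lrcorner\nabla+A(t))\xi(t)$, and combine this with the structural identity $\bar{D}_t\alpha(t)=\mathbb{H}\beta(t)+\dbar_E^*\dbar_EG_E\beta(t)$ (hence $\dbar_E^*\xi(t)=0$ and $\mathbb{H}\xi(t)=\mathbb{H}\beta(t)$) to see that the leading Taylor coefficient $\xi_k$ is harmonic and equal to its own harmonic projection, which the hypothesis kills for $k\le n-1$. This buys you two things: you never need the curvature/Maurer--Cartan expansion of $\dbar_E\beta(t)$ (only the packaged fact $\bar{D}_t^2=0$), and the bootstrap is a clean coefficient-by-coefficient induction rather than an iteration of an integral-operator identity. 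The only ingredients you must (and do) verify are that $\varphi(t),A(t)$ have no constant term, so that the order of vanishing genuinely increases, and that $\dbar_E\xi_k=\dbar_E^*\xi_k=0$ forces $\xi_k$ harmonic on the compact $X$. I see no gap.
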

\begin{proof}
If $\bar{D}_t\alpha(t)=0\text{ mod }t^n$, then it is clear that $\mathbb{H}((\varphi(t)\lrcorner\nabla+A(t))\alpha(t))=0\text{ mod }t^n$ since $\mathbb{H}\dbar_E=0$.

Conversely, suppose that $\mathbb{H}((\varphi(t)\lrcorner\nabla+A(t))\alpha(t))=0\text{ mod }t^n$. Let $\psi(t):=\bar{D}_t\alpha(t)$. Since $\alpha(t)$ satisfies
$$\alpha(t)+\dbar_E^*G_E(\varphi(t)\lrcorner\nabla+A(t))\alpha(t)=0,$$
applying $\dbar_E$ gives
$$\dbar_E\alpha(t)=-\dbar_E\dbar_E^*G_E(\varphi(t)\lrcorner\nabla+A(t))\alpha(t).$$
Then
$$\psi(t)=-\dbar_E\dbar^*_EG_E(\varphi(t)\lrcorner\nabla+A(t))\alpha(t)+(\varphi(t)\lrcorner\nabla+A(t))\alpha(t).$$

Since $(\varphi(t)\lrcorner\nabla+A(t))\alpha(t)\text{ mod }t^n$ has no harmonic part, we have
\begin{align*}
\psi(t) & = \dbar^*_E\dbar_EG_E(\varphi(t)\lrcorner\nabla+A(t))\alpha(t)\\
& = \dbar^*_EG_E[(\dbar_{T_X}\varphi(t)\lrcorner\nabla+\varphi(t)\lrcorner F_{\nabla}+\dbar_{\text{End}(E)}A(t))\alpha(t)\\
& \quad\quad -(\varphi(t)\lrcorner\nabla+A(t))\dbar_E\alpha(t)]\\
& = \dbar^*_EG_E\Big[-\frac{1}{2}[(A(t),\varphi(t)),(A(t),\varphi(t))]\cdot\alpha(t)\\
& \quad\quad  - (\varphi(t)\lrcorner\nabla+A(t))(\psi(t)-(\varphi(t)\lrcorner\nabla+A(t))\alpha(t))\Big]\\
& = -\dbar^*_EG_E[(\varphi(t)\lrcorner\nabla+A(t))\psi(t)]\text{ mod }t^n,
\end{align*}
where the Lie bracket acts by
$$[(A(t),\varphi(t)),(A(t),\varphi(t))]\cdot\alpha(t):=(2\varphi(t)\lrcorner\nabla+[A(t),A(t)]+[\varphi(t),\varphi(t)]\lrcorner\nabla)\alpha(t).$$
Since the leading order term of $(A(t),\varphi(t))$ is at least 1, the leading order of $\dbar^*_EG_E[(\varphi(t)\lrcorner\nabla+A(t))\psi(t)]$ is of order at least 2. Hence $\psi(t)$ has no first order term. Inductively, we conclude $\psi(t)=0\text{ mod }t^n$.

\end{proof}

Finally, we claim that the harmonic part of $(\varphi(t)\lrcorner\nabla+A(t))\alpha(t)$ vanishes under the assumption that $O_{n,n-1}^q\equiv 0$ for all $n\geq 1$.

\begin{proposition}
The obstructions $O_{n,n-1}^q\equiv 0$ for all $n\geq 1$ if and only if for any $\alpha(t)$ satisfying
$$\alpha(t)+\dbar_E^*G_E(\varphi(t)\lrcorner\nabla+A(t))\alpha(t)=0$$
and $\dbar_E\alpha(0)=0$, we have $\bar{D}_t\alpha(t)=0$.
\end{proposition}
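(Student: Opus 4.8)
The plan is to relate the harmonic part $\mathbb{H}\big((\varphi(t)\lrcorner\nabla+A(t))\alpha(t)\big)$, whose vanishing (mod $t^n$ for all $n$) is equivalent by the previous proposition to $\bar D_t\alpha(t)=0$, to the obstruction maps $O^q_{n,n-1}$ order by order in $t$. So the proof will be an induction on $n$: assuming $\bar D_t\alpha(t)=0\bmod t^n$ (equivalently, the harmonic part vanishes mod $t^n$) has been arranged for every formal solution $\alpha(t)$ of the integral equation with $\dbar_E\alpha(0)=0$, I will show that the $n$-th harmonic coefficient $\mathbb{H}\big((\varphi(t)\lrcorner\nabla+A(t))\alpha(t)\big)^{(n)}$ represents, up to a nonzero scalar and the Dolbeault-harmonic identification $\mathcal H^{q+1}\cong H^{q+1}(X,E)$, exactly $O^q_{n,n-1}\big([j_0^{n-1}(\alpha)(t)]\big)$ evaluated on the class of the truncation $j_0^{n-1}(\alpha(t))$. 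Granting this, $O^q_{n,n-1}\equiv 0$ forces the $n$-th harmonic coefficient to vanish, completing the induction and giving $\bar D_t\alpha(t)=0$; conversely, if some $O^q_{n,n-1}$ is nonzero on some class, take $\alpha(0)$ a harmonic representative of that class, run the integral-equation recursion to build $\alpha(t)$, and the same identification shows $\bar D_t\alpha(t)\ne0$.

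First I would record that $j_0^{n-1}(\alpha(t))$ is automatically $\bar D_t$-closed mod $t^{n-1}$: this is immediate from the inductive hypothesis applied at level $n-1$, so $[j_0^{n-1}(\alpha(t))]$ is a legitimate input to $O^q_{n,n-1}$. Next, by Formula \eqref{eqn:obstruction2}, $O^q_{n,n-1}\big([j_0^{n-1}(\alpha)(t)]\big)=\big[t^{n-1}\sum_{j=0}^{n-1}(\varphi^{n-j}\lrcorner\nabla+A^{n-j})\alpha^j\big]$, and the bracketed $t^{n-1}$-coefficient is precisely the degree-$n$ part of $(\varphi(t)\lrcorner\nabla+A(t))\alpha(t)$ modulo the image of $\bar D_{n-1}^q$ — but modulo $t^n$ that image is just $\dbar_E$ of $(0,q)$-forms, so the class only sees the $\dbar_E$-cohomology, i.e. the harmonic part. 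Under the isomorphism of Proposition \ref{lem:unnatural_isom} identifying $H^{q+1}((\pi_*\mathcal D^\bullet)_0\otimes k(0))$ with $H^{q+1}(X,E)\cong\mathcal H^{q+1}_{\dbar_E}$, the class $O^q_{n,n-1}([j_0^{n-1}(\alpha)(t)])$ corresponds to $\mathbb{H}\big((\varphi(t)\lrcorner\nabla+A(t))\alpha(t)\big)^{(n)}$ (times $t^{n-1}$, which is harmless). This is the computational heart of the argument, and it is essentially the same bookkeeping already carried out in the Examples following Theorem \ref{thm:main_2}.

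The main obstacle I anticipate is the well-definedness and naturality bookkeeping in the backward direction: given a nonzero value of $O^q_{n,n-1}$, one must check that the $\alpha(t)$ produced by the integral-equation recursion $\alpha^n+\sum_{i<n}\dbar_E^*G_E(\varphi_{n-i}\lrcorner\nabla+A_{n-i})\alpha^i=0$ is the \emph{same} (up to $\bar D_t$-exact and harmonic ambiguities that do not affect the relevant class) as an arbitrary formal solution, so that "$O^q_{n,n-1}\ne 0$ on some class" genuinely obstructs \emph{this particular} $\alpha(t)$. Here one uses that changing $\alpha(0)$ within its $\dbar_E$-class, or changing $\alpha(t)$ by a $\bar D_t$-exact term, changes $O^q_{n,n-1}$ by an element of $\mathrm{Im}(\bar D_{n-1}^{q})$ and by the effect on lower-order classes, exactly as in the proof of Proposition \ref{prop:extension} and the Remark following it; since the integral-equation solution is the canonical (harmonic-gauge) choice, its obstruction class is the one computed by $\mathbb{H}$. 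Once this identification is pinned down, both implications follow by the induction described above, and the convergence of the resulting $\alpha(t)$ is supplied by the preceding Proposition in this appendix.
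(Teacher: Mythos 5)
Your overall architecture matches the paper's: induct on the order $n$, and identify the vanishing of $O^q_{n,n-1}$ with the vanishing of the harmonic part of the degree-$n$ coefficient of $(\varphi(t)\lrcorner\nabla+A(t))\alpha(t)$, then invoke the preceding proposition of the appendix. But the computational heart of your argument rests on a claim that is false as stated: you assert that the image of $\bar{D}^q_{n-1}$ intersected with the pure top-order elements $t^{n-1}\cdot\Omega^{0,q+1}(E)$ ``is just $\dbar_E$ of $(0,q)$-forms,'' so that the class only sees $\dbar_E$-cohomology. In fact, if $t^{n-1}c=\bar{D}^q_{n-1}\eta$ with $\eta=\sum_{j=0}^{n-1}\eta^jt^j$, the vanishing of the lower-order coefficients forces $\eta$ to be $\bar{D}$-closed mod $t^{n-1}$, and the top coefficient is
$$c=\dbar_E\eta^{n-1}+\sum_{j=0}^{n-2}(\varphi^{n-1-j}\lrcorner\nabla+A^{n-1-j})\eta^j,$$
whose second sum is an obstruction cocycle of order $n-1$ attached to the \emph{primitive} $\eta$, not to $\alpha$. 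It is $\dbar_E$-closed but not automatically $\dbar_E$-exact; concluding that $\mathbb{H}(c)=0$ requires applying the hypothesis $O^q_{n-1,n-2}\equiv 0$ to the class $[j_0^{n-2}(\eta)(t)]$, then $O^q_{n-2,n-3}$ to the next primitive, and so on down to order one. This descent on the auxiliary primitives is exactly the ``keep repeating the previous argument'' step in the paper's proof, and it is not covered by your induction hypothesis, which quantifies only over solutions $\alpha(t)$ of the integral equation.

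A secondary, related soft spot is your converse direction: a nonzero value of $O^q_{n,n-1}$ is attached to a class of $(n-1)$-jets, whereas the integral-equation recursion starting from a harmonic $\alpha(0)$ produces one particular jet; to conclude that \emph{this} $\alpha(t)$ fails to be $\bar{D}_t$-closed you need the obstruction class to depend only on $[\alpha(0)]$, which again holds only after the lower-order obstructions are known to vanish (take $n$ minimal to arrange this). You correctly flag this as the main obstacle but do not resolve it. So the proposal is the right strategy with a genuine gap at its central step; filling it in reproduces the paper's descent argument.
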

\begin{proof}
If $\mathbb{H}((\varphi(t)\lrcorner\nabla+A(t))\alpha(t))=0$ for any $\alpha=\alpha(0)\in\ker(\dbar_E^q)$, then $\alpha(t)$ is an extension of $\alpha$. Hence $O_{n,n-1}^q\equiv 0$ for all $n\geq 1$.

For the converse direction, we proceed by induction on $n$. For $n=1$, we have
$$j_0^1(\bar{D}_t\alpha_0)(t)=\bar{D}_0(j_0^0(\beta)(t))=j_0^0(\bar{D}_t\beta)(t)$$
for some local section $\beta=\sum_{n=0}^{\infty}\beta_nt^n$, i.e.
$$(\varphi_1\lrcorner\nabla+A_1)\alpha_0=\dbar_E\beta_0.$$
Hence $\mathbb{H}((\varphi(t)\lrcorner\nabla+A(t))\alpha(t))=0\text{ mod }t$. Assume $\mathbb{H}((\varphi(t)\lrcorner\nabla+A(t))\alpha(t))=0\text{ mod }t^{n-1}$. Then $\alpha(t)$ is an $(n-1)$-th order extension of $\alpha_0$. By assumption, we have $O_{n,n-1}[j_0^{n-1}(\alpha(t))]=0$. Therefore,
$$t^{n-1}\sum_{j=0}^{n-1}(\varphi_{n-j}\lrcorner\nabla+A_{n-j})\alpha^j=\bar{D}_{n-1}(j^{n-1}_0(\beta)(t))=j^{n-1}_0(\bar{D}_t\beta)(t).$$
Hence
$$\sum_{j=0}^{n-1}(\varphi_{n-j}\lrcorner\nabla+A_{n-j})\alpha^j
=\dbar_E\beta^{n-1}+\sum_{j=0}^{n-2}(\varphi_{n-1-j}\lrcorner\nabla+A_{n-1-j})\beta^j$$
and
$$\dbar_E\beta^{k}+\sum_{j=0}^{k-1}(\varphi_{k-j}\lrcorner\nabla+A_{k-j})\beta^j=0$$
for $k \leq n-2$.

The last $(n-2)$ equations simply mean that $\beta$ defines an extension of $\beta_0$ of order $n-2$. By assumption, we have $O_{n-1,n-2}^q[j_0^{n-2}(\beta)(t)]=0$, and so
$$\sum_{j=0}^{n-2}(\varphi_{n-1-j}\lrcorner\nabla+A_{n-1-j})\beta^j=\dbar_E\gamma^{n-2}+\sum_{j=0}^{n-3}(\varphi_{n-2-j}\lrcorner\nabla+A_{n-2-j})\gamma^j$$
for some $\gamma=\sum_{n=0}^{\infty}\gamma^nt^n$. Repeating the previous argument, this reduces to the $n=1$ case, and so
$$\sum_{j=0}^{n-1}(\varphi_{n-j}\lrcorner\nabla+A_{n-j})\alpha^j$$
is $\dbar_E$-exact, and therefore, has no harmonic part. This completes the induction argument.
\end{proof}

\bibliographystyle{amsplain}
\bibliography{geometry}

\providecommand{\bysame}{\leavevmode\hbox to3em{\hrulefill}\thinspace}
\providecommand{\MR}{\relax\ifhmode\unskip\space\fi MR }
\providecommand{\MRhref}[2]{%
  \href{http://www.ams.org/mathscinet-getitem?mr=#1}{#2}
}
\providecommand{\href}[2]{#2}
\begin{thebibliography}{1}

\bibitem{CS_pair}
K.~Chan and Y.-H. Suen, \emph{A differential-geometric approach to deformations
  of pairs {$(X,E)$}}, Complex Manifolds \textbf{3} (2016), 16--40.
  \MR{3458972}

\bibitem{Grauert_DIT}
H.~Grauert, \emph{Ein {T}heorem der analytischen {G}arbentheorie und die
  {M}odulr\"aume komplexer {S}trukturen}, Inst. Hautes \'Etudes Sci. Publ.
  Math. (1960), no.~5, 64. \MR{0121814 (22 \#12544)}

\bibitem{Hartshorne_AG}
R.~Hartshorne, \emph{Algebraic geometry}, Springer-Verlag, New York-Heidelberg,
  1977, Graduate Texts in Mathematics, No. 52. \MR{0463157 (57 \#3116)}

\bibitem{Huang95}
L.~Huang, \emph{On joint moduli spaces}, Math. Ann. \textbf{302} (1995), no.~1,
  61--79. \MR{1329447 (96d:32021)}

\bibitem{Huybrechts95}
D.~Huybrechts, \emph{The tangent bundle of a {C}alabi-{Y}au
  manifold---deformations and restriction to rational curves}, Comm. Math.
  Phys. \textbf{171} (1995), no.~1, 139--158. \MR{1341697 (96g:14032)}

\bibitem{Morrow-Kodaira_book}
K.~Kodaira and J.~Morrow, \emph{Complex manifolds}, AMS Chelsea Publishing,
  Providence, RI, 2006, Reprint of the 1971 edition with errata. \MR{2214741
  (2006j:32001)}

\bibitem{Kuranishi65}
M.~Kuranishi, \emph{New proof for the existence of locally complete families of
  complex structures}, Proc. {C}onf. {C}omplex {A}nalysis ({M}inneapolis,
  1964), Springer, Berlin, 1965, pp.~142--154. \MR{0176496 (31 \#768)}

\bibitem{Ye_jumping}
X.~Ye, \emph{The jumping phenomenon of {H}odge numbers}, Pacific J. Math.
  \textbf{235} (2008), no.~2, 379--398. \MR{2386229 (2009a:32016)}

\bibitem{Ye_jumping_tangent}
\bysame, \emph{The jumping phenomenon of the dimensions of cohomology groups of
  tangent sheaf}, Acta Math. Sci. Ser. B Engl. Ed. \textbf{30} (2010), no.~5,
  1746--1758. \MR{2778644 (2011m:32015)}

\end{thebibliography}

\end{document}